\documentclass[paper]{AUXILIAIRE/classes/elsarticlePLF}

\usepackage{amsmath,amssymb,a4wide}
\usepackage{graphicx}
\graphicspath{{AUXILIAIRE/}}
\usepackage{epsf, subfigure, verbatim}
\usepackage{psfrag} 
\numberwithin{equation}{section}
\newtheorem{theorem}{Theorem}[section]
\newtheorem{lemma}[theorem]{Lemma}
\newtheorem{corollary}[theorem]{Corollary}
\newtheorem{definition}[theorem]{Definition}

\newtheorem{proposition}[theorem]{Proposition}
\newtheorem{example}[theorem]{Example}

\makeatletter
\renewcommand{\section}{\@startsection{section}{1}{\z@}%
  {14pt plus 6pt minus 3pt}%
  {9pt plus 6pt minus 3pt}%
  {\normalsize\bfseries\boldmath}}
\makeatother

\newproof{proof}{Proof}

\newcommand \bei {\begin{itemize}}
\newcommand \eei {\end{itemize}}
\newcommand \be {\begin{equation}}
\newcommand \bel {\begin{equation}\label}
\newcommand \ee {\end{equation}}

\newcommand \intconstcurv {\mathcal{I}}

\newcommand{\coloneqq}{\mathrel{:=}}

\newcommand \RR {\mathbb R}
\newcommand \eps \epsilon

\begin{document}

\title{Diffusive-dispersive traveling waves and kinetic relations. 
\\
Generic flux with multiple inflection points}

 \author{Marc Laforest\textsuperscript{a} and Philippe G. LeFloch\textsuperscript{b}}

\address[Laf]{Polytechnique Montr\'eal, d\'epartement de math\'ematiques et de g\'enie industriel, 
\\
C.P. 6079, succ. centre-ville,
Montr\'eal, Qu\'ebec, H3C 3A7, Canada. Email: marc.laforest@polymtl.ca
\vskip.2cm
}

\address[LeF]{Laboratoire Jacques-Louis Lions and Centre National de la Recherche Scientifique, 
\\
Sorbonne University, 
4 Place Jussieu, 75252 Paris, France. Email: contact@philippelefloch.org}

\begin{abstract}
We consider the broad class of scalar hyperbolic conservation laws with generic non-convex fluxes, which may thus have an arbitrary finite number of inflection points. We investigate the properties of classical and nonclassical shock waves realized as traveling-wave limits of a linear diffusive-dispersive regularization and provide a phase-plane classification beyond the case of a single inflection point. Within the speed range of the generalized Maxwell construction, our main result classifies the left outgoing profiles by their limiting equilibria. In an elementary bowtie configuration, we prove the existence of nonclassical shock profiles with arbitrarily many oscillations. For nested configurations, we order the saddle connections on each entry interval, taking account of inaccessible saddles. We show that the number of oscillations present in the non-classical shock depends on the diffusion-dispersion ratio, and tends to infinity along the elementary families of saddle connections as this ratio tends to zero. Our construction is based on a generalization of Maxwell's equal area rule, which provides us with an invariant region in the phase space for the study of diffusive-dispersive traveling waves.
\end{abstract}

\begin{keyword}
hyperbolic conservation law \sep nonconvex flux \sep diffusive-dispersive regularization
\sep nonclassical shock \sep kinetic function

\MSC[2010] 35L65 \sep  35L67 \sep 35L75
\end{keyword}
\date{March 2016}

\maketitle

{\small
\setcounter{tocdepth}{1}
\tableofcontents
}


\section{Introduction}

Our purpose in this paper is the study of nonclassical entropy solutions to the following initial-value problem associated 
with a scalar conservation law in one space variable:
\be
\partial_t u + \partial_x f(u) = 0,\label{c1.1}
\ee
\be
u(x,0)  = u_0(x).
\label{c1.2}
\ee
Here, the initial data $u_0: \RR \to \RR$ is an integrable function with bounded total variation, while the flux-function $f:\RR\to \RR$ is a smooth and non-convex function which may have multiple inflection points. 

The standard approach to defining weak solutions to \eqref{c1.1}-\eqref{c1.2} is by adding a vanishing viscosity to the right-hand side of \eqref{c1.1}. However, physically realistic regularizations may involve both diffusive and dispersive terms
such as (with initial and boundary conditions that converge appropriately)
\begin{equation} 
\label{eq:lineardd} 
u^\eps_t+f(u^\eps)_x= \eps \alpha \, u^\eps_{xx}+\eps^2 u^\eps_{xxx},
\end{equation}
where $\alpha>0$ is a fixed constant and $\eps \to 0$. Traveling-wave limits yield entropy-admissible shock waves whose selection requires not only an entropy inequality (see below) but also a kinetic relation imposed on undercompressive discontinuities \cite{JMS,LeFlochbook}. LeFloch \cite{LeFlochbook} together with collaborators have established a general theory based on prescribing a kinetic function encoding
admissibility as limits of general regularizations and have shown existence of nonclassical weak solutions for \eqref{c1.1}-\eqref{c1.2}. 

The main assumption made in \cite{LeFlochbook} is that the flux-function admits a single inflection point (or more generally, as far as hyperbolic systems are concerned, Lax's genuine nonlinearity condition is assumed to be lost on each wave curve at one point at most). Our purpose in the present paper is to treat flux-functions with an arbitrary number of inflection points. We concentrate our attention on scalar conservation laws and on the construction of nonclassical traveling waves. 

The first study of undercompressive traveling waves for the cubic flux $f(u)=u^3$ with linear diffusion and dispersion was carried out by Jacobs, McKinney, and Shearer~\cite{JMS}.  Bedjaoui and LeFloch~\cite{BedjaouiLeFloch1,BedjaouiLeFloch3,BedjaouiLeFloch5} extended this analysis to general flux-functions with one inflection point and generalized the analysis to singular regularization. To these regularizations, a kinetic function characterizing the family of all traveling waves was introduced and its asymptotic properties were studied extensively.  Bedjaoui, Chalons, Coquel, and LeFloch~\cite{BedChaCoqLeF}  analyzed non-monotonic traveling waves and kinetic functions for a van der Waals system with nonlinear viscosity and capillarity. Their results motivate us in the analysis developed here. Cf.~also the reviews~\cite{LeFlochreview,ElHoeferShearer2017}. 

In this article, we analyze the traveling-wave equation for linear diffusion-dispersion regularizations. As usual, the zero diffusion separatrix curves bound absorbing regions. We prove a signed variation formula for connections between saddles, control the passage near the target saddle, and classify the parameter set into open \emph{attraction intervals} and an at most countable \emph{critical set}. The elementary bowtie construction provides sequences with prescribed numbers of turns. For nested configurations, opposite variations on a common energy section order the connections on every entry interval; descent through a finite hierarchy of bubbles classifies all outgoing profiles, while allowing inaccessible saddles.

An outline of this paper is as follows. 
In Section~\ref{sec:2}, we briefly review some elements of the theory of conservation laws and 
present key properties of the entropy dissipation. Section~\ref{sec:3} considers entropy solutions realized as limits of vanishing diffusion-dispersion
regularizations and the problem under consideration is reduced to the study of 
trajectories in a phase space. Section~\ref{sec:4} develops the generalized Maxwell construction and studies the geometry, convergence, and parameter dependence of the traveling-wave trajectories. Our main results are presented in Section~\ref{sec:general}: building on observations made in the previous sections, we prove that nonclassical traveling waves, connecting a left-hand state equilibrium to
a saddle equilibrium, arise for a countable set of critical values of the diffusion-dispersion ratio. Section~\ref{sec:6} presents some numerical results illustrating our theoretical conclusions, while in Section~\ref{sec:conclusion} we conclude with some remarks.  

%
%

\section{Generic flux and entropy dissipation}
\label{sec:2}

\subsection{Background material}

Recall that the conservation law \eqref{c1.1} may be endowed with a smooth function $(U,F): \RR \to \RR^2$ called 
an entropy-entropy flux pair and, for physical reasons, it may be required that any solution to \eqref{c1.1} satisfies the additional conservation law
\begin{equation}  \label{condition_entropie}
\partial_t U(u)  + \partial_x F(u) \leq 0.
\end{equation}
The pair $(U,F)$ satisfies $ F^{\prime}(u)=U^{\prime}(u)f^{\prime}(u)$ for $u\in \RR$ 
and with $U$ strictly convex and $C^2$.  For the linear regularization~\eqref{eq:lineardd}, we use the quadratic entropy $U(u)=u^2/2$, whose dissipative identity is compatible with this regularization.

Previous work has shown that the class of solutions should include solutions that are not continuously differentiable and, in fact, are discontinuous. For initial data $u_0\in L^{\infty}(\RR) \cap \operatorname{BV}(\RR)$,  weak solutions 
$ u\in L^{\infty}(\RR\times(0,+\infty))$ of the Cauchy problem \eqref{c1.1} must satisfy
\begin{equation}  \label{weakS}
\int\limits_{0}^{+\infty}\int\limits_{\RR}\left( u\partial_t\phi  + f(u)\partial_x \phi \right) dx dt+\int\limits_{\RR}\phi(x,0)u_0 dx=0,
       \text{ for all } \phi \in C_c^\infty(\RR\times[0,+\infty)).
\end{equation}
Weak solutions of the Cauchy problem \eqref{c1.1}-\eqref{c1.2} are generally non-unique. To single out the solution of interest, we restrict attention to weak solutions satisfying the inequality
\begin{equation}  \label{theta_ineg_entropie}
\int\int_{\RR \times \mathbb{R^+}} \left( U(u)\partial_t\phi  + F(u)\partial_x \phi \right) dx dt\geq 0,\quad \phi \in  C_c^\infty(\RR\times(0,+\infty)), \quad \phi\geq 0.
\end{equation}
In this context, the simplest initial value problem is obtained when the initial data consists of two constant states denoted by $u_\pm$, i.e. 
\begin{gather}\label{Riem_cond}
u(x,0)  = \begin{cases}
  u_- & \text{for } x < 0, \\
  u_+ & \text{for } x > 0. 
\end{cases}
\end{gather} 
The Riemann problem consists of finding a weak solution satisfying this initial condition together with the entropy inequality above. When undercompressive nonclassical shocks are allowed, a further admissibility condition, the kinetic function, must also be supplemented. We refer to \cite{LeFlochbook} for the details and, in the rest of this paper, we focus on the issue of constructing (classical and nonclassical) traveling wave solutions to \eqref{eq:lineardd} with, as mentioned earlier, the choice of entropy $U(u)=u^2/2$.

If $u$ is a discontinuity traveling at some speed $a$, written explicitly as
\begin{gather}\label{disc_with_a}
u(x,t)  = \begin{cases}
  u_- & \text{for } x < at, \\
  u_+ & \text{for } x \geq at,
\end{cases}
\end{gather} 
then if it is a weak solution to \eqref{c1.1}-\eqref{c1.2}, the speed must be
\be
     a=a(u_+,u_-) := \dfrac{f(u_+)-f(u_-)}{u_+-u_-}.
\ee
The speed given above $a=a(u_+,u_-)$ is called the Rankine-Hugoniot shock speed
and is a function of $u_+, u_-$ with the same regularity as $f$. When such a weak solution is also entropy-admissible,
in the sense of \eqref{theta_ineg_entropie}, 
it is called a shock wave. If we have a Riemann problem for \eqref{c1.1}-\eqref{c1.2}  with $u_-<u_+$ (resp. $u_+<u_-$ ) such that $f$ is convex (resp. concave) over the interval $[u_-,u_+]$ (resp. $[u_+,u_-]$), then the classical entropy solution is the continuous rarefaction wave given by
\begin{gather}
u(x,t)  = \begin{cases}
  u_- & \text{for } x/t\leq f^\prime(u_-), \\
  (f^\prime)^{-1}(\frac{x}{t})& \text{for } f^\prime(u_-)<x/t< f^\prime(u_+), \\
  u_+ & \text{for } f^\prime(u_+)\leq x/t.
\end{cases}
\end{gather}

 
\subsection{The class of flux-functions of interest}

We begin with a definition. 

\begin{definition}\label{def-generic} 
A function $f: \RR \to \RR$ is called a \textbf{generic} flux if it is $C^3$ and admits exactly $n\geq1$ non-degenerate inflection points
$I_n<I_{n-1}< \cdots <I_1$, that is, such that 
\be
       f^{\prime\prime}(I_k) =0,  \qquad f^{'''}(I_k)\neq 0 \quad \text{ for k=1,\ldots,n.} 
\ee
Moreover, we assume that the flux is not affine on any nontrivial interval and for $n$ even 
\begin{equation}  \label{1.2}
             \lim_{u \to \pm\infty} f(u) = +\infty, \qquad \lim_{u \to \pm\infty} f'(u) = \pm\infty,
\end{equation}
while for $n$ odd,  
\begin{equation} \label{1.3}
             \lim_{u \to \pm\infty} f(u) = \pm\infty, \qquad \lim_{u \to \pm\infty} f'(u) = +\infty.
\end{equation}
\end{definition}

The term generic refers here to the nondegenerate inflection points prescribed in Definition~\ref{def-generic}.   
We restrict attention to Riemann problems with 
$I_1<u_-$ and $u_+<u_-$, as was done in \cite{LeFlochbook}. Almost identical statements exists 
when $u_-<u_+$ and $u_-<I_n$, but they will be omitted since the purpose of this research is to explain 
how LeFloch's notion of kinetic functions can be extended to generic fluxes.

Some notation will be required later on and we introduce the intervals of convexity/concavity 
$\intconstcurv_i = [I_i, I_{i-1}]$ with the understanding that $I_0 = +\infty$ and $I_{n+1}= -\infty$.

\begin{example}  \label{ex:VanDerWalls}
The classical example is the flux associated to the van der Waals equation of state
which has two inflection points but does not satisfy the asymptotic properties \eqref{1.2} and \eqref{1.3}.
Furthermore, the theory was developed for the system of isentropic gas dynamics, so
the work there isn't applicable to our study of scalar conservation laws.
Despite these differences, much of the theory in this article was generalized from the $2 \times 2$ system 
studied in \cite{BedChaCoqLeF}.
\end{example}

\begin{figure}[!htb] 
\centering
\begin{minipage}{.5\textwidth}
       \centering  \includegraphics[width=\linewidth]{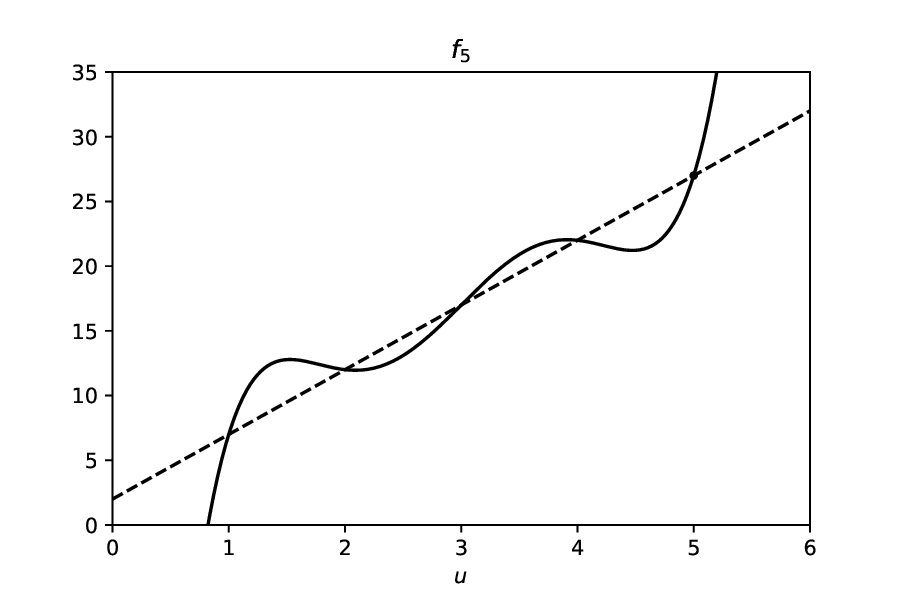}
\end{minipage}%
\begin{minipage}{.5\textwidth}
       \centering  \includegraphics[width=\linewidth]{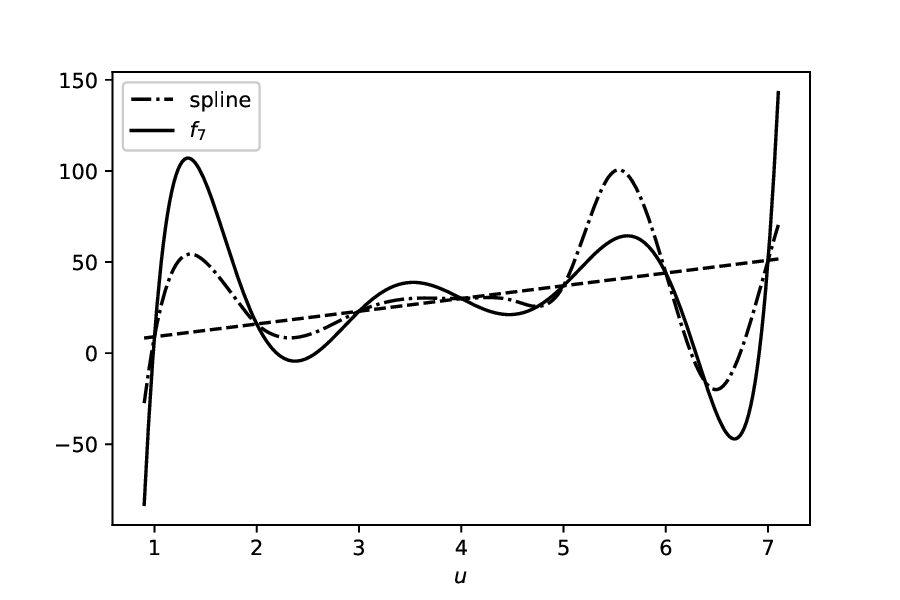}
\end{minipage} 
\caption{Graphs of different fluxes. The graph of $f_5$ is given on the left and the graphs
of $f_7$ and of the cubic spline from Example \ref{ex:fluxes2} are given on the right. 
The dotted line in each graph represents the line of slope $k$ through $(k,f_k(k))$.}
\label{fig:flux7}
\end{figure}

\begin{example}  \label{ex:fluxes}
A family of polynomial fluxes will be used to illustrate the concepts in this paper. For each 
odd $k=5,7,9, \ldots$, we define
\be
    f_k(u) := \prod_{i=1}^k (u-i) + ku + 2.
\ee
The flux $f_k$ is constructed so that the line of slope $k$ through $\big( k,f_k(k)\big)$
crosses the graph at $u=1,2,\ldots, k$, as illustrated in Figure \ref{fig:flux7}. Note
that $k$ is the degree of $f_k$, so its generic number of inflection points is $k-2$.
\end{example}

\begin{example}  \label{ex:fluxes2}
A large family of examples will also be generated as perturbations of the fluxes $f_k$ using cubic splines.
For any odd $k$, we will construct a cubic spline using the intersections of $f_k$ with the line of slope $k$
through $\big( k,f_k(k) \big)$ and choose the values of the spline at the intermediate points
$$
     1+\frac{1}{2}, 2+ \frac{1}{2}, \ldots, k - \frac{1}{2}.
$$
To complete the determination of the cubic spline, we will set the values of its derivative at $x=1$ and $x=k$ close to 
those of $f_k$. One such example will be a perturbation of $f_7$ with it's intermediate values given in Table  \ref{tab:spline1}
and it's derivatives set to be $f'(1) = 300$ and $f'(7)=200$. This spline is illustrated in Figure \ref{fig:flux7}
where it is seen that it is less oscillatory than  $f_7$ between $u=2$ and $u=5$.
\end{example}

\begin{table}[ht!]
\centering
\caption{Values determining the cubic spline of Example \ref{ex:fluxes2}.}
\label{tab:spline1}
\begin{tabular}{|c||c|c|c|c|c|c|}
\hline
$u$       & 1.5  &  2.5  &  3.5  &  4.5  &  5.5  &  6.5 \\ \hline   \hline
$f(u)$    & 50   &  10   &  30   &   29  & 100  & -20 \\ \hline
\end{tabular}
\end{table}

It is important to remark that adding an arbitrarily small bump to the flux, something like a smooth 
mollifier, can create (at least) two arbitrarily close inflection points, so that in practice 
one would expect that some inflection points are of little interest. 
In other words, when two or more inflection are present, we expect there to be scales determining the relevance 
of the inflection points. This subtle effect will appear much later in the analysis.

\subsection{Properties of the entropy dissipation}

We now discuss some properties of the entropy for generic flux. First of all, it is well-known that for weak 
solutions \eqref{weakS} formed by piecewise $C^1$ functions
with isolated discontinuities along $C^1$ curves, the weak entropy condition \eqref{theta_ineg_entropie} is satisfied 
if and only if a pointwise condition is satisfied along the curve. More specifically, if the
isolated discontinuity traveling at speed $a$ is characterized locally by
\begin{gather}\label{Aomegadisc_with_a}
u(x,t)  = \begin{cases}
  u_- & \text{for } x < x_0 +at, \\
  u_+ & \text{for } x_0+at \leq x,
\end{cases}
\end{gather}
then \eqref{theta_ineg_entropie} holds for all positive test functions if and only if
the entropy dissipation
\begin{equation}
         E(u_+,u_-)=-a(u_{+}, u_{-})\big( U(u_+)-U(u_-)\big) +\big(F(u_+)-F(u_-)\big) \label{3.2}
\end{equation}
is nonpositive at every point along the discontinuity \cite{LeFlochbook}.
The next lemma presents a form of the entropy dissipation that is
more amenable to analysis.

\begin{lemma}   \label{lem:entropyequivalent}
The entropy dissipation \eqref{3.2} is equivalent to
\begin{equation}
\begin{gathered}
E(u_+,u_-)=-\int\limits_{u_-}^{u_+}U^{\prime\prime}(v)(v-u_-)\big( a(v,u_-)-a(u_+,u_-)\big)dv.
\end{gathered}\label{entrop3.2}
\end{equation}
\end{lemma}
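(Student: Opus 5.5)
The plan is to verify the identity by rewriting the right-hand side of \eqref{entrop3.2}, using only $F'=U'f'$ and the definition of $a$. Fix $u_-$, write $a_+=a(u_+,u_-)$, and note that
\[
(v-u_-)\,a(v,u_-) = f(v)-f(u_-)
\]
for $v\neq u_-$. This extends continuously (indeed smoothly) to $v=u_-$, so the integrand in \eqref{entrop3.2} is regular and
\[
-\int_{u_-}^{u_+}U''(v)(v-u_-)\big(a(v,u_-)-a_+\big)\,dv
= -\int_{u_-}^{u_+}U''(v)\big(f(v)-f(u_-)\big)\,dv + a_+\int_{u_-}^{u_+}U''(v)(v-u_-)\,dv .
\]

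Next, integrate each piece by parts, with $U'$ as the antiderivative of $U''$.

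\textbf{Second integral.} Here
\[
\int_{u_-}^{u_+}U''(v)(v-u_-)\,dv = U'(u_+)(u_+-u_-) - \big(U(u_+)-U(u_-)\big).
\]

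\textbf{First integral.} Using $F'=U'f'$,
\[
\int_{u_-}^{u_+}U''(v)\big(f(v)-f(u_-)\big)\,dv = U'(u_+)\big(f(u_+)-f(u_-)\big) - \big(F(u_+)-F(u_-)\big).
\]

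Combining the two pieces gives
\[
-U'(u_+)\big(f(u_+)-f(u_-)\big) + \big(F(u_+)-F(u_-)\big) + a_+U'(u_+)(u_+-u_-) - a_+\big(U(u_+)-U(u_-)\big).
\]
The Rankine--Hugoniot relation $a_+(u_+-u_-)=f(u_+)-f(u_-)$ makes the two $U'(u_+)$ terms cancel. What remains is exactly
\[
-a_+\big(U(u_+)-U(u_-)\big) + F(u_+)-F(u_-) = E(u_+,u_-)
\]
from \eqref{3.2}.

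The only delicate points are regularity ones. The function $v\mapsto(v-u_-)a(v,u_-)$ is $C^1$, so the integration by parts is legitimate. The case $u_+=u_-$ is trivial, since both sides vanish. The argument applies equally when $u_+<u_-$, because the integrals are oriented and no sign assumptions are used. I do not expect a genuine obstacle: the main step is choosing to split off $f(v)-f(u_-)$ before integrating by parts, after which the cancellation through the Rankine--Hugoniot relation is immediate.
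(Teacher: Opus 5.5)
Your proof is correct and is essentially the paper's argument run in the opposite direction. The paper starts from $E$, rewrites it as $\int_{u_-}^{u_+}U'(v)\big(f'(v)-a(u_+,u_-)\big)\,dv$, integrates by parts, and removes the boundary term with the Rankine--Hugoniot relation; you start from the right-hand side, split off $f(v)-f(u_-)$, integrate by parts, and use the same Rankine--Hugoniot cancellation.
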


\begin{proof} Following the proof in  \cite[Theorem 3.1]{LeFlochbook}, we begin by rewriting the entropy dissipation 
in integral form by exploiting the relation $F' = U'f'$,
\begin{align*}
E(u_+,u_-)& =-a(u_{+}, u_{-})\big( U(u_+)-U(u_-)\big) +\big(F(u_+)-F(u_-)\big)\\
                   & = \int\limits_{u_-}^{u_+}U^\prime(v)\Big(-a(u_{+}, u_{-})+f^\prime(v)\Big)dv.
\end{align*}
Using integration by parts, we find
\begin{align*}
      E(u_+,u_-)=& \, U^\prime(v)\Big(-a(u_+,u_{-})v+f(v)\Big) \Big|_{u_-}^{u_+} 
                  -\int\limits_{u_-}^{u_+}U^{\prime\prime}(v)\Big(-a(u_{+}, u_{-})v+f(v)\Big)dv.
\end{align*}    
We now add and subtract $U^\prime(u_+)(-a(u_{+}, u_{-})u_-+f(u_-))$, simplify and obtain 
\begin{align*}
E(u_+,u_-) = & \, U^\prime(u_+)\Big(-a(u_{+}, u_{-})(u_+-u_-)+f(u_+)-f(u_-)\Big)\\
                    &\, +\Big(U^\prime(u_+)-U^\prime(u_-) \Big) \Big(-a(u_{+}, u_{-})u_-+f(u_-)\Big) \\
                    & \, -\int\limits_{u_-}^{u_+}U^{\prime\prime}(v)\Big(-a(u_{+}, u_{-})v+f(v)\Big)dv.
\end{align*}
The Rankine-Hugoniot condition implies that the first term vanishes, and so
\begin{align*}
E(u_+,u_-) = & \int\limits_{u_-}^{u_+}U^{\prime\prime}(v) \Big(-a(u_{+}, u_{-})u_-+f(u_-)\Big)dv 
                       -\int\limits_{u_-}^{u_+}U^{\prime\prime}(v)\Big(-a(u_{+}, u_{-})v+f(v)\Big)dv\\
= & -\int\limits_{u_-}^{u_+}U^{\prime\prime}(v)(v-u_-) \Big(\dfrac{-a(u_{+}, u_{-})(v-u_-)}{v-u_-}+\dfrac{f(v)-f(u_-)}{v-u_-}\Big)dv, 
\end{align*}
which is what we wanted to show.\qed 
\end{proof}

\begin{proposition}\label{thm1}
Given a generic flux function $f$ with $n$ inflection points and a state $u_- > I_1$, then the following properties hold: 
\begin{itemize}
\item[i)] $E(u_-,u_-) = 0$;
\item[ii)] $E(\cdot,u_-)$ is positive and increasing over $(u_-, +\infty)$;
\item[iii)] $E(\cdot, u_-)$ is negative and increasing for arguments sufficiently close but smaller than $u_-$;
\item[iv)] For $v\neq u_-$, the critical points of $E(\cdot,u_-)$ are exactly the points where the tangent to $f$ passes through $(u_-,f(u_-))$. Monotonicity can change only at these points.
               When $u_+ < u_-$, then
               \begin{equation}    \label{eq:signE}
                      \operatorname{sign}\Big( \partial_{u_+} E(u_+,u_-) \Big) =
                        \operatorname{sign} \Big( a(u_+, u_-) - f'(u_+) \Big). 
               \end{equation} 
\end{itemize}
\end{proposition}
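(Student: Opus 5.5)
The plan is to derive a factored formula for $\partial_{u_+}E(u_+,u_-)$ and read all four items off it. I will work from the first integral form obtained in the proof of Lemma~\ref{lem:entropyequivalent},
\[
E(u_+,u_-)=\int_{u_-}^{u_+}U'(v)\big(f'(v)-a(u_+,u_-)\big)\,dv
= F(u_+)-F(u_-)-a(u_+,u_-)\big(U(u_+)-U(u_-)\big).
\]
Item i) is immediate, since the integral is over a degenerate interval.

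For the derivative, I differentiate in $u_+$ for $u_+\neq u_-$. This gives $U'(u_+)f'(u_+)-a\,U'(u_+)-\partial_{u_+}a\,\big(U(u_+)-U(u_-)\big)$. Differentiating the Rankine--Hugoniot quotient gives $\partial_{u_+}a(u_+,u_-)=\big(f'(u_+)-a\big)/(u_+-u_-)$. Substituting,
\[
\partial_{u_+}E(u_+,u_-)=\big(f'(u_+)-a(u_+,u_-)\big)\Big(U'(u_+)-\frac{U(u_+)-U(u_-)}{u_+-u_-}\Big).
\]
For the quadratic entropy $U(u)=u^2/2$ the second factor equals $(u_+-u_-)/2$. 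For a general strictly convex $U$ it still has the sign of $u_+-u_-$, because the slope of a chord of a strictly convex function is smaller than the derivative at the right endpoint and larger than it at the left endpoint.

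This identity gives iv) directly. For $v\neq u_-$, the derivative vanishes exactly when $f'(v)=a(v,u_-)$, that is, when the tangent line to the graph of $f$ at $v$ passes through $(u_-,f(u_-))$. Away from such points the derivative has a fixed sign, so monotonicity can change only there. When $u_+<u_-$, the second factor is negative, which yields \eqref{eq:signE}.

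For ii) and iii), I first check that $f$ is strictly convex on $(I_1,+\infty)$. Since $I_1$ is the largest inflection point, $f''$ has constant sign there. In both cases \eqref{1.2} and \eqref{1.3} we have $f'(u)\to+\infty$ as $u\to+\infty$, so that sign must be positive. Strictness of the resulting inequalities also uses that $f$ is not affine on any interval.

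For $u_+>u_->I_1$, strict convexity gives $f'(u_+)>a(u_+,u_-)$, and the second factor is positive. Hence $\partial_{u_+}E>0$ on $(u_-,\infty)$, and together with $E(u_-,u_-)=0$ this gives ii).

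For $u_+<u_-$ with $u_+$ still in $(I_1,u_-)$, strict convexity on $[u_+,u_-]$ gives $a(u_+,u_-)>f'(u_+)$. By \eqref{eq:signE}, $E(\cdot,u_-)$ is then increasing on $(I_1,u_-)$. Since it vanishes at $u_-$, it is negative there, which gives iii) with the explicit neighbourhood $(I_1,u_-)$.

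There is no real obstacle in this argument. The only points needing care are the sign of $f''$ beyond $I_1$, which I will deduce from the asymptotics of $f'$ as above, and the chord-slope sign of the second factor in the general strictly convex case.
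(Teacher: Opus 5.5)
Your proof is correct and follows the paper's route. Your factorization $\partial_{u_+}E=(f'(u_+)-a)\big(U'(u_+)-\frac{U(u_+)-U(u_-)}{u_+-u_-}\big)$ is the paper's $b(u_+,u_-)\,\partial_{u_+}a(u_+,u_-)$ with the factor $u_+-u_-$ shifted between the two terms, the sign of the convexity factor comes from the same strict convexity of $U$, and your derivation of strict convexity of $f$ on $(I_1,+\infty)$ from $f'\to+\infty$ and the non-affine assumption spells out what the paper compresses into ``conditions \eqref{1.2} and \eqref{1.3} imply ii) and iii)''.
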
 
\begin{proof}
Starting from \eqref{3.2}, we compute the derivative  
\begin{align}
\partial_{u_+} E& (u_+,u_-)  \nonumber \\
 = & \, -\big(U(u_+)-U(u_-)\big)\partial_{u_+} a (u_{+}, u_{-})-a(u_{+}, u_{-})U'(u_+)+\partial_{u_+}F(u_+)  \nonumber \\
=&\big(U(u_-)-U(u_+)\big)\partial_{u_+} a (u_{+}, u_{-})-U'(u_+)\big(a(u_{+}, u_{-})-f^\prime(u_+)\big). \nonumber 
\end{align}
Since 
\begin{equation}    \label{thm1:eq1}
    \partial_{u_+}a\big(u_{+}, u_{-}\big)=\dfrac{f'(u_+)-a(u_{+}, u_{-})}{u_{+}-u_-},
\end{equation}
we conclude that
$$
      \partial_{u_+}E(u_+,u_-)=b(u_+,u_-)\partial_{u_+} a(u_{+}, u_{-}),
$$
where
$$
     b(u_+,u_-)=\big(U(u_-)-U(u_+)\big)-U'(u_+)\big(u_{-} - u_{+}\big)\,.
$$
Since $U$ is strictly convex, then $b>0$ for $ u_+\neq u_- $ and hence the monotonicity of $E$ is determined by the sign of $\partial_{u_+} a$.
The formula \eqref{thm1:eq1} shows that the critical points of $E$ are precisely the points $v$
where the slope of the tangent equals the slope of the secant joining $(v,f(v))$ and $(u_-,f(u_-))$ which is the fourth property,
and in particular the identity \eqref{eq:signE}.
Conditions \eqref{1.2} and \eqref{1.3} imply that properties {\it ii}) and {\it iii}) hold. 
\end{proof}

\begin{figure}[!htb] 
\centering
\begin{minipage}{.5\textwidth}
     \centering \includegraphics[width=1.0\textwidth]{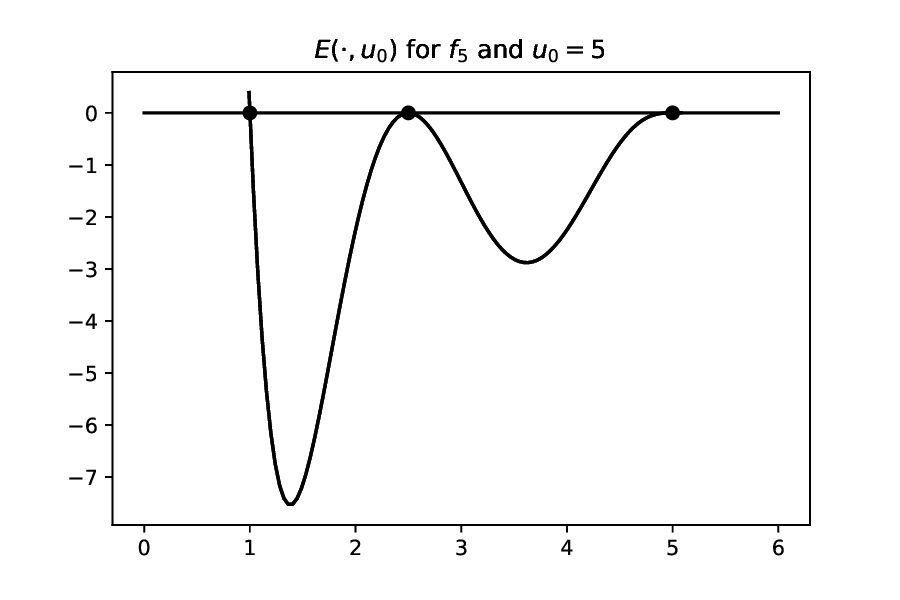} 
\end{minipage}%
\begin{minipage}{.5\textwidth}
       \centering \includegraphics[width=1.0\textwidth]{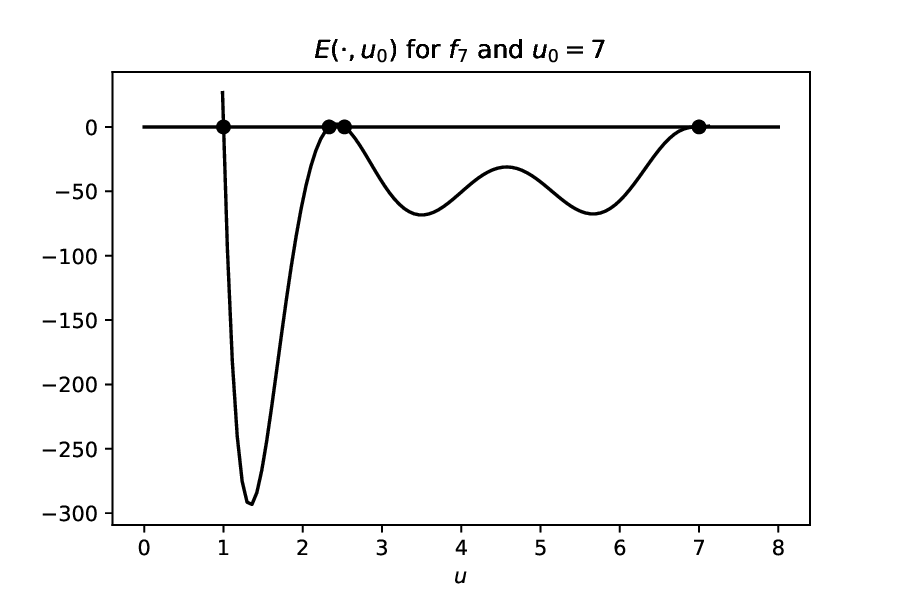}
\end{minipage} 
\caption{The entropy dissipation $E(\cdot,u_0)$ for $u_0 = 5$ for $f_5$ (on the left) and $u_0=7$ for $f_7$ (on the right) when assuming 
a quadratic entropy $U(u) = u^2/2$. For $f_5$, the entropy dissipation has a double root at $u=5/2$ while for $f_7$, 
there are roots at exactly $7/3$ and approximately $2.52695$.}
\label{fig:entropydissipationf5}
\end{figure}

\begin{example}
For the polynomial $f_5$ and the entropy $U(u) = u^2/2$, we can
compute that the polynomial entropy flux $F$ satisfying $F'(u) = U'(u) \cdot f'(u)$ is
$$
    F(u) = \frac{5}{6}u^6 - 12 u^5 + \frac{255}{4} u^4 -150 u^3 + \frac{279}{2} u^2.
$$
We can therefore analytically evaluate the entropy dissipation using the expression  \eqref{3.2}
and its graph is given in Figure \ref{fig:entropydissipationf5}. A similar computation can easily be
done for any polynomial flux combined with a polynomial entropy. 
\end{example}

\begin{example}
For the quadratic entropy $U(u) = u^2/2$, the entropy dissipation of $f_5$ has three roots $u=1, 5/2, 5$ where the second one is a 
second order root. This unexpected root is clearly visible in Figure \ref{fig:entropydissipationf5} and will be
a critical counter-example for later portions of the theory. A similar computation for the entropy dissipation of $f_7$ produces 
two additional roots, besides the obvious
roots at $u=1$ and $u=7$, as seen on the right hand side of Figure \ref{fig:entropydissipationf5}.
\end{example}

We now present some basic results concerning tangency points where the entropy dissipation
changes its monotonicity.

\begin{lemma}  \label{lem:num_tangency}
Fix $u_- > I_1$, then there exists at most one tangency point $u^{\natural}$ in each interval $\mathcal{I}_k$, that is to say
a point satisfying
\begin{equation}  \label{defn:tangency}
      f'(u^{\natural}) = \frac{f(u^{\natural}) - f(u_-)}{u^{\natural} - u_-}, 
\end{equation}
so that in total, there are at most $n$ tangency points.
\end{lemma}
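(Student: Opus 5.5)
Fix $u_- > I_1$ and introduce the auxiliary function
\[
g(v) := f(v) - f(u_-) - f'(v)(v-u_-),
\]
so that, for $v \neq u_-$, the point $v$ is a tangency point in the sense of \eqref{defn:tangency} exactly when $g(v)=0$. A direct computation gives
\[
g'(v) = -f''(v)(v-u_-).
\]
The plan is to show that $g$ is strictly monotone on each interval $\intconstcurv_k$, restricted to the side of $u_-$ where the tangency points can lie. Strict monotonicity immediately yields at most one zero per interval.

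\textbf{Intervals $\intconstcurv_k$ with $k\ge 2$.} These satisfy $\intconstcurv_k \subset (-\infty, I_1]$, so $v < u_-$ on them. Hence $g'(v)$ has the sign of $f''(v)$. On the interior of $\intconstcurv_k = [I_k, I_{k-1}]$ there is no inflection point, so $f''$ has a constant sign there. Moreover, $f''$ cannot vanish on a subinterval, since $f$ is not affine on any nontrivial interval. Its zeros inside the interval are therefore isolated points where $f''$ touches zero without changing sign. Consequently $g$ is strictly monotone on $\intconstcurv_k$ and has at most one zero there.

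\textbf{The interval $\intconstcurv_1 = [I_1, +\infty)$.} This interval contains $u_-$, so it splits into two pieces.
\begin{itemize}
\item On $(u_-, +\infty)$, Proposition~\ref{thm1}\,ii) states that $E(\cdot,u_-)$ is increasing. Alternatively, on this piece $g' = -f''(v-u_-)$ has a fixed sign, and $g(u_-)=0$. Either way, $g \neq 0$ on $(u_-,\infty)$, so there is no tangency point to the right of $u_-$.
\item On $[I_1, u_-)$, the function $g$ is again strictly monotone. Since $g(u_-)=0$, we get $g\neq 0$ on $[I_1,u_-)$ as well.
\end{itemize}
Thus $\intconstcurv_1$ contributes no tangency point at all, which is consistent with the bound "at most one".

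Adding over the $n$ intervals $\intconstcurv_1,\dots,\intconstcurv_n$ and the remaining interval $\intconstcurv_{n+1}=(-\infty,I_n]$, each contains at most one tangency point, while $\intconstcurv_1$ contains none. This gives at most $n$ tangency points in total, which is the count asserted in the statement.

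\textbf{Main obstacle.} The only delicate point is the non-strict sign of $f''$. In the generic setting $f''$ may have degenerate zeros that are not inflection points, so one cannot simply say $f''\neq 0$ on the interior of each interval. The argument handles this by relying on the hypothesis that $f$ is not affine on any interval: then $f''$ cannot vanish on an open set, which is enough to upgrade monotonicity of $g$ to strict monotonicity.
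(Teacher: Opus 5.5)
Your proof is correct and follows the paper's argument essentially step for step: the same auxiliary function (the paper calls it $D$) with derivative $-(u-u_-)f''(u)$, strict monotonicity on each curvature interval to the left of $u_-$ from the non-affine hypothesis, no tangency point on the convex rightmost interval $\mathcal{I}_1$, and the count over the remaining $n$ intervals. One small correction: the non-affine hypothesis only says that the zero set of $f''$ contains no interval, not that its zeros are isolated (they could accumulate). However, containing no interval is all that strict monotonicity of your $g$ requires, so the conclusion stands.
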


\begin{proof}
We define $D(u)\coloneqq f(u)-f(u_-)-f'(u)(u-u_-)$. For $u\neq u_-$, the tangency condition is $D(u)=0$, and $D'(u)=-(u-u_-)f''(u)$. On each curvature interval to the left of $u_-$, this derivative has a fixed sign and cannot vanish identically on any subinterval, so $D$ is strictly monotone and has at most one zero. On the rightmost curvature interval, strict convexity gives $D(u)<0$ for every $u\neq u_-$. There are $n$ remaining curvature intervals, proving the bound.
\end{proof}

\begin{definition}
Assume that there exists $J \geq 1$ tangency points, denoted by 
$$
      u^{\natural}_J < u^{\natural}_{J-1} < \cdots < u^{\natural}_1,
$$ 
that is,  points at which \eqref{defn:tangency} is satisfied.
Introduce the intervals $\mathcal{I}^{\natural}_i = [u^{\natural}_i, u^{\natural}_{i-1}]$,
with the assumption that $u^{\natural}_0 = u_-$.
\end{definition}

Later, we will show (in Lemma \ref{lem:exist_tangency}) that there exists at least one tangency point, i.e. $J \geq 1$. 
At this stage of the analysis, Lemma \ref{lem:num_tangency} only provides an upper bound on the number
of tangency points.

\begin{corollary}
In each interval $\mathcal{I}^{\natural}_i$, the entropy dissipation function $E(\cdot, u_-)$ admits at most one root.
\end{corollary}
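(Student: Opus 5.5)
The plan is to show that $E(\cdot,u_-)$ is strictly monotone on the interior of each interval $\mathcal{I}^{\natural}_i$. A strictly monotone continuous function can have at most one root. The tool is Proposition~\ref{thm1}, item iv). For $u_+<u_-$ it gives
$$
\partial_{u_+}E(u_+,u_-)=b(u_+,u_-)\,\partial_{u_+}a(u_+,u_-),\qquad b>0,
$$
so the sign of $\partial_{u_+}E$ is the sign of $a(u_+,u_-)-f'(u_+)$. That sign equals the sign of $D(u_+)/(u_+-u_-)$, where $D(u)=f(u)-f(u_-)-f'(u)(u-u_-)$ is the function from the proof of Lemma~\ref{lem:num_tangency}. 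Indeed, $a(u,u_-)-f'(u)=D(u)/(u-u_-)$.

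The first step is to note that, by the definition of the points $u^{\natural}_j$, $D$ has no zero in the open interval $(u^{\natural}_i,u^{\natural}_{i-1})$. This holds because $u^{\natural}_1,\dots,u^{\natural}_J$ enumerate all tangency points below $u_-$, and $u^{\natural}_0=u_-$ is the endpoint. Since $D$ is continuous, it has constant sign on that open interval. Hence $\partial_{u_+}E$ has a fixed strict sign there, and $E(\cdot,u_-)$ is strictly monotone on the closed interval $\mathcal{I}^{\natural}_i$. The endpoints are included, since a derivative vanishing only at the endpoints does not spoil strict monotonicity.

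The second step treats the unbounded pieces. I interpret $u^{\natural}_{J+1}=-\infty$, i.e.\ the unbounded interval $(-\infty,u^{\natural}_J]$. If that piece is meant to be covered, the same argument applies verbatim, because $D$ has no zero there either. On $(u_-,\infty)$, Proposition~\ref{thm1} ii) already gives positivity, so no root occurs there.

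Finally, I would check the interval $\mathcal{I}^{\natural}_1=[u^{\natural}_1,u_-]$. Its endpoint $u_-$ is itself a root, since $E(u_-,u_-)=0$. Strict monotonicity then shows there is no other root in that interval, consistent with Proposition~\ref{thm1} iii). The only potential obstacle is this bookkeeping: roots lying exactly at tangency points could be shared by two adjacent intervals. This does not contradict the claim, which is stated per closed interval.
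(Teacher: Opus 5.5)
Your proof is correct and takes the same route as the paper, which simply invokes Proposition~\ref{thm1} to say that $E(\cdot,u_-)$ can change monotonicity only at the endpoints of $\mathcal{I}^{\natural}_i$, so it has at most one root there. Your version adds a detail the paper leaves implicit: the sign of $D(u)/(u-u_-)$ shows that $\partial_{u_+}E$ has a strict fixed sign in the interior, which gives strict rather than merely weak monotonicity.
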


\begin{proof} By Proposition \ref{thm1}, the monotonicity of $E$ only 
changes at the endpoints of the interval $\mathcal{I}^{\natural}_i$, hence it can have at most one root
in that interval.
\end{proof}

\section{Diffusive-dispersive traveling waves}
\label{sec:3}

\subsection{The traveling wave problem}

We now derive the traveling wave equation and present conditions under which such solutions exist. 
We thus consider the conservation law with linear diffusion and dispersion \eqref{eq:lineardd}, or say  in the form 
\begin{equation}\label{defusidispersieq}
\partial_t u^{\eps}  +\partial_x f(u^{\eps}) =\eps b u^{\eps}_{xx} +\eps^2 c u^{\eps}_{xxx},
\end{equation}
in which $f:\RR\to \RR$ is a generic flux with $n$ inflection points, and the ratio 
$
    \alpha :=\dfrac{b}{\sqrt{c}}
$
is held fixed for some constants $b,c > 0$. Solutions to the regularized conservation law \eqref{defusidispersieq}
are sought here in the class of traveling profiles depending on $(x-at)/\eps$, with limits as $\eps\to0$ of the form~\eqref{Aomegadisc_with_a}.

\begin{definition} A function $u^\eps(x, t) = w(y)$ with
$y:=\alpha \dfrac{(x-at)}{\eps b}$ is called a traveling wave of the equation \eqref{defusidispersieq} 
(traveling at the speed $a$ and connecting $u_-$ to $u_+$) 
 if it is a smooth solution to \eqref{defusidispersieq} satisfying 
$$
    w(-\infty)=u_-,\qquad w(+\infty)=u_+,
$$
$$   
      \lim\limits_{|y|\to +\infty}w^\prime(y)=\lim\limits_{|y|\to +\infty}w^{\prime\prime}(y)=0.
$$
\end{definition}

When traveling waves of \eqref{defusidispersieq} exist, one is interested in their limit
when $\eps \to 0$. This is a kind of admissibility criteria for
shock waves of the conservation law \eqref{c1.1}. More general regularizations,
in particular nonlinear ones, have been studied in Bedjaoui and LeFloch \cite{BedjaouiLeFloch5,BedChaCoqLeF}
for fluxes with one inflection point and for van der Waals systems. Here we restrict the analysis to linear regularizations.

It is known that if $w$ is a traveling wave solution, then the pointwise limit away from $x=at$
\begin{gather}\label{condtravwave}
u(x,t):=\lim\limits_{\eps\to 0}w\left(\alpha \dfrac{x-at}{\eps b}\right)= 
\begin{cases}
  u_- & \text{for } x < at, \\
  u_+ & \text{for } x > at,
\end{cases}
\end{gather} 
is a weak solution to \eqref{c1.1} satisfying the entropy inequality \eqref{condition_entropie} when
the entropy is taken to satisfy
$
      U''(u) = c,
$
where the proof can be found in \cite{LeFlochbook}. In the remainder, we will therefore assume that 
the entropy is chosen in this form, which reduces to
\begin{equation}    \label{entropy:explicit}
       U(u) = c \frac{u^2}{2},
\end{equation}
with the entropy flux taken to satisfy $F' = U' \cdot f'$. Note that
the results in the previous section on entropy dissipation hold for an arbitrary entropy $U$.

Abusing our notation for $u$, we search for traveling wave solutions $u=u(y)$ of \eqref{defusidispersieq} depending on the rescaled 
spatial variable $y:=\alpha\dfrac{(x-at)}{\eps b} = \dfrac{(x-at)}{\eps \sqrt{c}}$.
Substituting $u(y)$ into \eqref{defusidispersieq}, we obtain
\begin{align*}
          \dfrac{1}{\eps \sqrt{c}}\big( -a u_y+f_y(u)\big)&=\dfrac{1}{\eps \sqrt{c}} 
                \bigg( b\eps \dfrac{1}{\eps \sqrt{c}} u_{yy}+c \eps^2 \dfrac{1}{c \eps^2}            u_{yyy}\bigg),\\
      -a u_y+ f_y(u)&  =\alpha u_{yy}+u_{yyy}.
\end{align*}
We find that a traveling wave of \eqref{defusidispersieq} connecting $u_-$ to some state $u_+$, is the solution  $ u(y)$ of the ordinary differential equation 
\begin{equation}\label{tointegrate}
        -a u_y+ f_y(u)=\alpha u_{yy}+ u_{yyy},
\end{equation}
with the boundary conditions
\begin{equation}\label{Aboudcond}
        \lim\limits_{y\to \pm\infty}u(y)=u_\pm,\quad \lim\limits_{y\to \pm\infty}u_y(y)=\lim\limits_{y\to \pm\infty}u_{yy}(y)=0.
\end{equation}
Integrating \eqref{tointegrate} from $L$ to $y$, we deduce
\begin{align*}
         \int\limits_{L}^{y}(-a u_y+ f_y(u))dy & = \int\limits_{L}^{y}(\alpha u_{yy}+ u_{yyy})dy,\\
        -a\big(u(y)-u(L)\big)+f\big(u(y)\big)-f\big(u(L)\big)&=\alpha \big(u_y(y)-u_y(L)\big)+\big(u_{yy}(y)-u_{yy}(L)\big),
\end{align*}
In view of \eqref{Aboudcond}, taking the limit as $L\to -\infty$, we obtain 
\begin{equation}\label{integtravwave}
          -a\big(u(y)-u_-\big)+f\big(u(y)\big)-f\big(u_-\big)=\alpha u_y(y)+ u_{yy}(y).
\end{equation}
Letting $y \to +\infty$, we see that the speed $a$ must satisfy 
\begin{equation}
        -a(u_+-u_-)+f(u_+)-f(u_-) = 0,
\end{equation}
which is the Rankine-Hugoniot condition. For the remainder of the analysis, the speed $a$ will therefore
be determined by the choice of $u_-$ and $u_+$.

Setting now $v= u_y$. We reformulate \eqref{integtravwave} as a differential system of two equations, 
\begin{equation}\label{defsystemdiff2}
    \dfrac{d}{dy}
     \left(\begin{matrix}
     u\\v
    \end{matrix}\right)
   =
     \left( \begin{matrix}
      v\\ -\alpha v+\big(  f(u)-au \big) -\big( f(u_-)-au_-\big)
     \end{matrix}\right).
\end{equation}
We define the function $T(u,v)$ as 
\begin{equation}\label{defKuv}
      T(u,v)=
       \left(\begin{matrix}
      v\\-\alpha v +\big( g(u,a) - g(u_-,a)\big)
     \end{matrix}\right), 
     \quad g(u,a)=f(u)-au,
\end{equation}
while the boundary conditions take the form
\begin{equation}\label{defcond}
       \lim\limits_{y\to \pm\infty}\left(\begin{matrix}
      u(y)\\v(y)
       \end{matrix}
      \right)=\left(\begin{matrix}
      u_\pm\\0
        \end{matrix}\right),
\end{equation}
with 
$
       \lim\limits_{y\to \pm\infty}v_y(y)=0.
$
It is clear that solutions to system \eqref{defsystemdiff2} are at least $C^3$, and in fact the
first component will be at least $C^4$, by the fundamental theorem for ordinary differential equations.

\subsection{Traveling waves associated with a given speed}
We want to study the asymptotic behavior of trajectories of \eqref{defsystemdiff2}. For this purpose, we consider 
the stability of the equilibria of  \eqref{defsystemdiff2}. By definition, a point $(u,v)$ is called 
an \textit{equilibrium point} of \eqref{defsystemdiff2} if  $T(u,v)=(0,0)^T$. In this
work, the equilibrium points of \eqref{defsystemdiff2} will require $v=0$ so that the term equilibrium
will be used interchangeably with the $u$-coordinate of the point.

\begin{lemma}  \label{lem:equilibrium}
Consider a generic flux function $f$ with $n$ inflection points and a state $u_-$ fixed, greater than 
the largest inflection point $I_1$. The function T in \eqref{defKuv} vanishes at the equilibrium points $(u,v)\in \RR^2$ satisfying
\begin{equation}\label{equilibria}
                          g(u, a) = g(u_-, a)\text{ and } v= 0.
\end{equation}
The equilibrium $(u_-,0)$ is always present. Every other equilibrium satisfies the Rankine--Hugoniot condition with speed $a$, 
and there are at most $n+2$ equilibria satisfying~\eqref{equilibria}.
\end{lemma}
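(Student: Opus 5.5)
The plan is to treat the three assertions in turn, the first two being direct computations from \eqref{defKuv}. The first component of $T$ is $v$, so $T(u,v)=0$ forces $v=0$. Substituting $v=0$ into the second component leaves $g(u,a)-g(u_-,a)=0$, which gives exactly \eqref{equilibria}. Taking $u=u_-$ makes this hold trivially, so $(u_-,0)$ is always an equilibrium, whatever the speed $a$.

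For $u\neq u_-$, I will rewrite $g(u,a)=g(u_-,a)$ as $f(u)-f(u_-)=a(u-u_-)$. Dividing by $u-u_-$ gives $a=a(u,u_-)$, which is precisely the Rankine--Hugoniot condition with speed $a$.

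For the count, I will bound the number of zeros of $h(u):=g(u,a)-g(u_-,a)=f(u)-f(u_-)-a(u-u_-)$ on $\RR$.
\begin{itemize}
\item We have $h''=f''$. By Definition~\ref{def-generic}, $f''$ vanishes only at the $n$ nondegenerate inflection points $I_n<\cdots<I_1$, and $f''$ changes sign at each of them because $f'''(I_k)\neq0$.
\item Hence $h$ is strictly convex or strictly concave on each of the $n+1$ intervals $\intconstcurv_i$, $i=1,\dots,n+1$. Strictness holds because $f''$ has only isolated zeros, which is consistent with $f$ not being affine on any interval.
\item A strictly convex or strictly concave function has at most two zeros on an interval. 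This gives the naive bound $2(n+1)$, which is too weak.
\end{itemize}

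The sharper tool is Rolle's theorem. The derivative $h'=f'-a$ is strictly monotone on each $\intconstcurv_i$, so $h'$ has at most one zero per interval and at most $n+1$ zeros on $\RR$. Rolle's theorem then gives at most $n+2$ zeros of $h$, since between any two consecutive zeros of $h$ there is a zero of $h'$. This yields at most $n+2$ equilibria.

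The main obstacle is this counting step. One must notice that counting zeros of $h$ interval by interval loses too much, and that Rolle's theorem applied to $h'$, which is piecewise strictly monotone, gives the sharp bound $n+2$.

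I would also remark on the case $u_->I_1$. There $h$ is strictly convex on $\intconstcurv_1$ and has the zero $u_-$ in that interval. This can further restrict the configuration, but it is not needed for the stated bound.
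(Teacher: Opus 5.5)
Your proposal is correct and essentially matches the paper's proof: you obtain $v=0$, rewrite $g(u,a)=g(u_-,a)$ as the Rankine--Hugoniot condition when $u\neq u_-$, and apply Rolle's theorem to $h'=f'-a$, which is strictly monotone on each of the $n+1$ curvature intervals and so has at most $n+1$ zeros. One correction to your justification: Definition~\ref{def-generic} does not say that $f''$ vanishes only at the $I_k$, but strict monotonicity of $f'$ on each $\mathcal{I}_i$ still holds, because $f''$ keeps a fixed weak sign there and cannot vanish identically on any subinterval, since $f$ is nowhere affine.
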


\begin{proof}
The equilibria occur at values of $u$ and $v$ for which 
\begin{equation}\label{defsystemdiff}
       T(u,v)=
       \left(\begin{matrix}
       0\\0
      \end{matrix}\right).
\end{equation}
This implies that $v=0$ and  $g(u, a) = g(u_-, a)$ which is equivalent to $$f(u)-au = f(u_-)-au_-, $$ hence,  for $u\neq u_-$,
\begin{equation}   \label{eq:simple_equilibria}
 a = \dfrac{f(u)- f(u_-)}{u-u_-}.
\end{equation}
The function $f'-a$ is strictly monotone on each of the $n+1$ curvature intervals, so it has at most $n+1$ zeros. Rolle's theorem then bounds the number of distinct zeros of $f(u)-au-f(u_-)+au_-$ by $n+2$.
\end{proof}

\begin{example}
The fluxes $f_k$ of Example \ref{ex:fluxes} have particularly simple equilibrium points when the slope is $k$.
First of all, we note that the condition~\eqref{equilibria} is equivalent to~\eqref{eq:simple_equilibria}  for $u\neq u_0$, while $u_0$ itself is always 
an equilibrium. For $u_0=k$ and the slope $a=k$,
the equilibrium at $u$ satisfy \eqref{eq:simple_equilibria}
$$
     k (u-k) = \Big( \prod_{i=1}^k (u-i) + ku+2 \Big) - \Big( \prod_{i=1}^k(k-i) + k \cdot k +2  \Big),
$$
which can be simplified to
$
    0 = \prod_{i=1}^k (u-i).
$
This shows that the equilibria are precisely $u=1,2,\ldots,k$. Points are usually counted from right to left and
because we are focusing on travelling waves with a large left-hand state $u_0$, we will usually write 
$$
   u_0 =k, u_1 = k-1, \ldots, u_{k-1} = 1.
$$
\end{example}

To exhibit nonclassical trajectories, consider the geometric situation where there are exactly $K+1$  equilibria $(u_i,0)$, 
$i = 0, 1, 2,\ldots, K$  satisfying \eqref{equilibria}. We number these points from right to left 
\begin{equation}     \label{defn:equilibriapoints}
       u_{K}< \cdots < u_1<u_0:=u_-.
\end{equation}
If $(u_i, 0)$ is an equilibrium point, the Jacobian matrix $DT(u_i)$ is given by
\begin{equation}
   DT(u_i)=\left(\begin{matrix}
   0\quad \quad \quad \quad \quad \quad 1\\
    (f^\prime(u_i)-a)\quad -\alpha
    \end{matrix}\right).
\end{equation}
The characteristic equation of $DT(u_i)$ 
\begin{equation}
      \operatorname{det}\big( DT(u_i)-\mu I \big)=\mu^2+\alpha \mu - (f^\prime(u_i)-a)=0
\end{equation} 
which admits two roots. They are found to be
\begin{align*}
\underline{\mu}_i=  \, -\dfrac{\alpha}{2} - \sqrt{\left(\dfrac{\alpha}{2}\right)^2+ (f^\prime(u_i)-a)}, 
\qquad\quad 
\overline{\mu}_i= \, -\dfrac{\alpha}{2}+ \sqrt{\left(\dfrac{\alpha}{2}\right)^2+ (f^\prime(u_i)-a)}.
\end{align*}
The two corresponding eigenvectors  are
\begin{equation}  \label{eigenvectors}
     \underline{r}_i = 
            \left(\begin{matrix}
       1 \\ \underline{\mu}_i
      \end{matrix}\right), 
 \quad \qquad  
      \overline{r}_i =              \left(\begin{matrix}
       1 \\ \overline{\mu}_i
      \end{matrix}\right).
\end{equation}

\begin{lemma}\label{valpropres}
For $i=0,\ldots,K$, with the state $u_i$ held fixed and $(\alpha/2)^2+f'(u_i)-a>0$, we have
$$
\dfrac{\partial \underline{\mu}_i }{\partial a}(u_i;a,\alpha)>0,\quad
\dfrac{\partial \underline{\mu}_i }{\partial \alpha}(u_i;a,\alpha)<0, 
\quad \dfrac{\partial \overline{\mu}_i }{\partial a}(u_i;a,\alpha)<0.
$$
Moreover, under the assumption $f'(u_i)-a>0$, we have
$$
\dfrac{\partial \overline{\mu}_i }{\partial \alpha}(u_i;a,\alpha)<0.
$$
\end{lemma}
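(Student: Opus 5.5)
The plan is a direct differentiation of the explicit formulas for $\underline{\mu}_i$ and $\overline{\mu}_i$. Throughout, the state $u_i$ is held fixed, so $f'(u_i)$ is a constant and the only dependence on the parameters is through $a$ and $\alpha$. I would introduce the discriminant
$$
\Delta := \Big(\frac{\alpha}{2}\Big)^2 + f'(u_i) - a,
$$
which is positive by hypothesis. Then $\sqrt{\Delta}$ is smooth in $(a,\alpha)$, and we may write $\underline{\mu}_i = -\frac{\alpha}{2} - \sqrt{\Delta}$ and $\overline{\mu}_i = -\frac{\alpha}{2} + \sqrt{\Delta}$. The partial derivatives we need are $\partial_a \Delta = -1$ and $\partial_\alpha \Delta = \alpha/2$.

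For the derivatives in $a$, the term $-\alpha/2$ does not depend on $a$. This gives
$$
\frac{\partial \underline{\mu}_i}{\partial a} = -\frac{\partial_a \Delta}{2\sqrt{\Delta}} = \frac{1}{2\sqrt{\Delta}} > 0,
\qquad
\frac{\partial \overline{\mu}_i}{\partial a} = -\frac{1}{2\sqrt{\Delta}} < 0.
$$
For $\underline{\mu}_i$ in $\alpha$, both terms contribute with the same sign:
$$
\frac{\partial \underline{\mu}_i}{\partial \alpha} = -\frac12 - \frac{\alpha}{4\sqrt{\Delta}} < 0,
$$
since $\alpha>0$ (recall $\alpha = b/\sqrt{c}$ with $b,c>0$).

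The only step needing care is $\partial_\alpha \overline{\mu}_i$, where the two contributions compete:
$$
\frac{\partial \overline{\mu}_i}{\partial \alpha} = -\frac12 + \frac{\alpha}{4\sqrt{\Delta}} = \frac{\alpha - 2\sqrt{\Delta}}{4\sqrt{\Delta}}.
$$
Its sign is the sign of $\alpha - 2\sqrt{\Delta}$. Both $\alpha$ and $2\sqrt{\Delta}$ are positive, so we may square them, and $\alpha < 2\sqrt{\Delta}$ holds if and only if $\alpha^2 < 4\Delta = \alpha^2 + 4\big(f'(u_i)-a\big)$. This is equivalent to $f'(u_i) - a > 0$. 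Under the extra hypothesis of the lemma (which also guarantees $\Delta>0$), we therefore get $\partial_\alpha \overline{\mu}_i < 0$ exactly as stated.

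This equivalence is the main point to get right. Conversely, for a non-saddle equilibrium with $f'(u_i)-a<0$, the derivative would be positive, which explains why the last claim requires the additional assumption.
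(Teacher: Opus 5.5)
Your proof is correct and takes the same approach as the paper, whose proof simply states that the signs follow from direct differentiation of the explicit root formulas; you carry this out, including the one nontrivial equivalence $\alpha<2\sqrt{\Delta}\iff f'(u_i)-a>0$. The sign conclusions for both $\alpha$-derivatives also hold at $\alpha=0$, where each equals $-1/2$, so your restriction to $\alpha>0$ can be dropped; this matters because the monotonicity lemma applies this lemma with $0\le\alpha_1<\alpha_2$.
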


\begin{proof} This requires only a simple verification based on the explicit formulas for the roots.  \end{proof}

\begin{lemma}
Between each consecutive pair of equilibrium points $u_i,u_{i+1}$ with $i=1,\ldots,K-1$, there exists
at least one tangency point $u^{\natural}$, that is a point satisfying
$$
             f'(u^{\natural}) = \frac{f(u^{\natural}) - f(u_-)}{u^{\natural} - u_-}.    
$$
In particular, the number of equilibrium points is bounded, i.e. $K \leq n+1$.
\end{lemma}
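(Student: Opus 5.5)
Consider the auxiliary function $h(u) \coloneqq g(u,a) - g(u_-,a) = f(u) - f(u_-) - a(u-u_-)$, whose zeros are exactly the equilibria $u_K < \cdots < u_1 < u_0 = u_-$ from \eqref{defn:equilibriapoints}. The plan is to convert the tangency condition \eqref{defn:tangency} into a statement about a secant slope. For $u \neq u_-$ set
$$
\phi(u) \coloneqq a(u,u_-) = \frac{f(u)-f(u_-)}{u-u_-}.
$$
A point $u^\natural \neq u_-$ is a tangency point exactly when $\phi'(u^\natural)=0$, since \eqref{thm1:eq1} gives $\phi'(u) = \big(f'(u)-\phi(u)\big)/(u-u_-)$. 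Each equilibrium $u_i$ with $i \geq 1$ satisfies $\phi(u_i) = a$ by \eqref{eq:simple_equilibria}. Thus for $1 \leq i \leq K-1$ the points $u_{i+1} < u_i$ both lie in $(-\infty, u_-)$, where $\phi$ is $C^1$, and $\phi(u_{i+1}) = \phi(u_i) = a$. Rolle's theorem then yields $u^\natural \in (u_{i+1},u_i)$ with $\phi'(u^\natural)=0$, which is the desired tangency point. The only care needed is that the interval avoids the singularity at $u_-$; this is why the statement restricts to $i \geq 1$, excluding the pair $(u_1,u_0)$.

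For the bound on $K$, the intervals $(u_{i+1},u_i)$ for $i=1,\ldots,K-1$ are pairwise disjoint. Each contains a distinct tangency point, so there are at least $K-1$ tangency points. Lemma \ref{lem:num_tangency} bounds their total number by $n$, which gives $K-1 \leq n$, that is $K \leq n+1$. This is consistent with the count of $n+2$ equilibria (namely $K+1$) in Lemma \ref{lem:equilibrium}.

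None of these steps is a real obstacle. The only point to double-check is that Lemma \ref{lem:num_tangency} applies, i.e.\ that $u_- > I_1$ is assumed. That hypothesis is in force throughout this section (it is stated in Lemma \ref{lem:equilibrium}).
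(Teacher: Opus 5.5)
Your proof is correct and matches the paper's argument: Rolle's theorem applied to the secant slope $u\mapsto a(u,u_-)$ on each $[u_{i+1},u_i]$ yields at least $K-1$ tangency points, and Lemma~\ref{lem:num_tangency} caps their number at $n$. One small correction to your commentary: $a(\cdot,u_-)$ has only a removable singularity at $u_-$, and the pair $(u_1,u_0)$ is excluded because the extended value $a(u_0,u_0)=f'(u_0)$ in general differs from $a$, so Rolle's theorem does not apply there; the argument itself is unaffected.
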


\begin{proof}
For $i\geq1$, the function $u\mapsto a(u,u_0)$ is smooth on $[u_{i+1},u_i]$ and has the same value $a$ at both endpoints. Rolle's theorem gives an interior point where its derivative vanishes, which is exactly the stated tangency condition. Thus there are at least $K-1$ tangencies. Lemma~\ref{lem:num_tangency} gives $K-1\leq n$, proving $K\leq n+1$.
\end{proof}

\begin{definition}\label{def21} Consider a differential system of the form
$$
        \frac{d}{dy} \left( \begin{array}{c} u \\ v \end{array} \right) = S(u,v) 
$$
where $S:\mathbb{R}^2 \rightarrow \mathbb{R}^2$ is a smooth mapping. Let $(u^*, v^*)\in \RR^2$ be an equilibrium
point, that is, a root of $S(u^*, v^*) = 0$. Denote by $ \underline{\mu} =\underline{ \mu}(u^*, v^*)$ and $\overline{\mu} = \overline{\mu}(u^*, v^*)$ the
two (real or complex) eigenvalues of the Jacobian matrix of $S$ at $(u^*, v^*)$. Then, the
equilibrium $(u^*, v^*)$ is called
\begin{itemize}
\item a {\bf stable point} if $Re(\underline{\mu})$ and $Re(\overline{\mu})$ are both negative,
\item a {\bf saddle point} if  $Re(\underline{\mu})$ and $Re(\overline{\mu})$ have opposite sign,
\item or an {\bf unstable point} if  $Re(\underline{\mu})$ and $Re(\overline{\mu})$ are both positive.
\end{itemize}
Moreover, a stable or unstable point is called a {\bf node} if the eigenvalues are real and
a {\bf spiral} if they are complex conjugate.
\end{definition}

For the fixed state $u_0=u_-$ and speed $a$, we introduce the potential
\be
\label{defn:G}
G(u;u_0,a)\coloneqq\int_{u_0}^{u}\big[g(z,a)-g(u_0,a)\big]\,dz.
\ee
Here, $g(u,a)=f(u)-au$ is defined in the equation~\eqref{defKuv}. When $u_0$ and $a$ are fixed, we write $G(u)=G(u;u_0,a)$ and define the energy
\be
\label{eq:lyapunov-correction}
H(u,v)\coloneqq\frac{v^2}{2}-G(u),\qquad \frac{dH}{dy}=-\alpha v^2.
\ee
The second identity holds along solutions of the system~\eqref{defsystemdiff2}, since $u_y=v$ and $v_y=-\alpha v+G'(u)$. Thus $H$ is conserved for $\alpha=0$ and is a Lyapunov function for $\alpha>0$.

Since we consider the asymptotic behaviour  $u\to u_+$ as $y\to+\infty$ and $u\to u_-$ as $y\to-\infty$, according to the above definition, one can easily verify the following result.

\begin{lemma}    \label{lem:type-equilibrium}
For each fixed value of $u_-$, $a$, each equilibrium point $(u_i, 0)$, and $\alpha>0$, one then have the following two observations
\begin{itemize}
\item if $f^\prime(u_i)-a < 0$, then $(u_i, 0)$ is a stable point,
\item if $f^\prime(u_i)-a > 0$, then  $(u_i, 0)$ is a saddle point.
\end{itemize}
Furthermore, in the cases that $f^\prime(u_i)-a< 0$ we have the additional result:
When $f^\prime(u_i)-a \geq -(\alpha/2)^2$ the equilibrium is a node, and is
a spiral otherwise.
When $\alpha=0$ and $f'(u_i)<a$, the equilibrium is a center, as follows from the conservation of $H$ in the equation~\eqref{eq:lyapunov-correction} and the inequality $G''(u_i)=f'(u_i)-a<0$: the nearby regular energy levels surrounding $(u_i,0)$ are periodic trajectories.
\end{lemma}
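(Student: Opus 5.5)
The classification in Lemma~\ref{lem:type-equilibrium} reduces to reading off signs from the explicit eigenvalues of $DT(u_i)$. These are the roots of $\mu^2+\alpha\mu-(f'(u_i)-a)=0$. Set $\Delta_i:=(\alpha/2)^2+(f'(u_i)-a)$, so that
\[
\underline{\mu}_i=-\tfrac{\alpha}{2}-\sqrt{\Delta_i},\qquad \overline{\mu}_i=-\tfrac{\alpha}{2}+\sqrt{\Delta_i}.
\]
The two roots satisfy $\underline{\mu}_i\,\overline{\mu}_i=-(f'(u_i)-a)$ and $\underline{\mu}_i+\overline{\mu}_i=-\alpha<0$. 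I will use these Vieta relations rather than the square roots wherever possible, since they avoid separating real and complex cases prematurely.

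\emph{Saddle case.} If $f'(u_i)-a>0$, the product of the eigenvalues is negative. Hence both are real, since complex conjugates would have positive product $|\mu|^2$, and they have opposite signs. So $(u_i,0)$ is a saddle in the sense of Definition~\ref{def21}.

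\emph{Stable case.} If $f'(u_i)-a<0$, the product is positive and the sum is $-\alpha<0$. If the roots are real, they share a sign, and that sign is negative. If they are complex conjugate, their common real part is $-\alpha/2<0$. Either way $(u_i,0)$ is stable. The refinement into node or spiral is just the sign of the discriminant: for $f'(u_i)-a\ge -(\alpha/2)^2$ we have $\Delta_i\ge 0$ and real eigenvalues (a node, possibly with a repeated eigenvalue in the borderline case), while otherwise $\Delta_i<0$ and the eigenvalues are complex with real part $-\alpha/2$ (a spiral).

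\emph{Case $\alpha=0$.} Here the linearization has purely imaginary eigenvalues $\pm i\sqrt{a-f'(u_i)}$, which is inconclusive on its own, so I use the energy $H$ from \eqref{eq:lyapunov-correction}, which is exactly conserved when $\alpha=0$.
\begin{itemize}
\item At $(u_i,0)$ we have $\nabla H=(-G'(u_i),0)=0$, because $G'(u_i)=g(u_i,a)-g(u_0,a)=0$.
\item The Hessian of $H$ is $\mathrm{diag}(-G''(u_i),1)=\mathrm{diag}(a-f'(u_i),1)$, which is positive definite.
\item By the Morse lemma, or directly by Taylor expansion, $(u_i,0)$ is a strict local minimum of $H$. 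For small $\delta>0$, the level sets $\{H=H(u_i,0)+\delta\}$ near the point are simple closed curves encircling it, and they contain no other equilibrium.
\item Trajectories stay on these level sets, and the vector field does not vanish there. So each such trajectory traverses its compact regular level curve and is periodic. Hence $(u_i,0)$ is a center.
\end{itemize}

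The only step with real content is the $\alpha=0$ center claim. The key point there is to argue that a regular compact level component without equilibria is a single periodic orbit, which is standard for planar Hamiltonian-type flows. Everything else is direct computation.
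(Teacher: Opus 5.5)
Your proposal is correct and follows the paper's route. The paper reads the saddle/stable and node/spiral classification directly off the explicit roots $\underline{\mu}_i,\overline{\mu}_i$ (your Vieta-relation packaging is equivalent), and it justifies the $\alpha=0$ center exactly as you do: conservation of $H$ together with $G''(u_i)=f'(u_i)-a<0$ makes $(u_i,0)$ a nondegenerate minimum of $H$ surrounded by periodic level curves.
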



\begin{lemma}   \label{remark1}
Consider a generic flux function $f$ with $n$ inflection points and the related system 
of differential equations \eqref{defsystemdiff} with $K+1$ equilibria $(u_i,0)$, $i=0,1,\ldots,K$, satisfying \eqref{equilibria}.
Assume $a<f'(u_0)$ and $f'(u_i)\neq a$ at every equilibrium, and let $\alpha>0$.
\begin{itemize}
\item[i)] The first  equilibria $u_0$ is a saddle point.
\item[ii)] The $K+1$  equilibria alternate between a saddle point and a stable point (either node or spiral).
\end{itemize}
\end{lemma}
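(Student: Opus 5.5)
The plan is to reduce both claims to the sign of the derivative of $h(u)\coloneqq g(u,a)-g(u_0,a)=f(u)-au-f(u_0)+au_0$ at its zeros, and then to invoke Lemma~\ref{lem:type-equilibrium}. By Lemma~\ref{lem:equilibrium}, the equilibria of \eqref{defsystemdiff2} are exactly the points $(u_i,0)$ with $h(u_i)=0$. Moreover, $h'(u_i)=f'(u_i)-a$. So an equilibrium is a saddle exactly when $h'(u_i)>0$ and a stable point (node or spiral) exactly when $h'(u_i)<0$. The hypothesis $f'(u_i)\neq a$ guarantees that every zero is simple, with $h'(u_i)\neq0$.

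For i), it suffices to note that $h'(u_0)=f'(u_0)-a>0$ by the assumption $a<f'(u_0)$. Lemma~\ref{lem:type-equilibrium} then shows that $u_0$ is a saddle, and this holds for every $\alpha>0$.

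For ii), I will use the fact that a $C^1$ function with only simple zeros changes sign at each zero. The list \eqref{defn:equilibriapoints} contains all zeros of $h$, so $h$ has no zero in $(u_{i+1},u_i)$ and therefore keeps a constant sign there. Because $u_i$ is a simple zero, $h$ takes opposite signs on the two sides of $u_i$. Taking the two adjacent intervals $(u_{i+1},u_i)$ and $(u_i,u_{i-1})$, the sign of $h'(u_i)$ is the sign of $h$ just to the right of $u_i$. Consider $h'(u_i)$ and $h'(u_{i+1})$ for consecutive equilibria. The sign of $h$ on $(u_{i+1},u_i)$ equals the sign of $h'(u_{i+1})$, since that interval lies to the right of $u_{i+1}$. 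It also equals minus the sign of $h'(u_i)$, since the interval lies to the left of $u_i$. Hence $h'(u_i)$ and $h'(u_{i+1})$ have opposite signs.

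By induction from $u_0$, which is a saddle by i), the equilibria $u_0,u_1,u_2,\dots$ are saddle, stable, saddle, and so on. Lemma~\ref{lem:type-equilibrium} then labels each stable one as a node or a spiral according to the sign of $f'(u_i)-a+(\alpha/2)^2$.

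The main point needing care is that \eqref{defn:equilibriapoints} enumerates \emph{all} zeros of $h$, with no extra zero hidden between consecutive ones. Otherwise the sign of $h$ could flip an even number of additional times, or a zero could be missed. This is built into the setup, which assumes exactly $K+1$ equilibria, together with Lemma~\ref{lem:equilibrium}. The other essential ingredient is the simplicity of the zeros, supplied by the hypothesis $f'(u_i)\neq a$; without it, a double root would break the alternation.
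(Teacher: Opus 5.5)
Your proposal is correct and matches the paper's proof: the paper uses the same function $q(u)=f(u)-f(u_0)-a(u-u_0)$, the simplicity of its zeros, $q'(u_0)=f'(u_0)-a>0$, and the constant sign of $q$ between consecutive zeros to get alternating signs of $q'$. It then concludes via Lemma~\ref{lem:type-equilibrium}, exactly as you do.
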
 

\begin{proof}
Set $q(u)=f(u)-f(u_0)-a(u-u_0)$. Its zeros are simple, and $q'(u_0)=f'(u_0)-a>0$. Between two consecutive zeros $q$ has a constant sign, so the signs of its derivatives at successive zeros alternate. Lemma~\ref{lem:type-equilibrium} gives the stated saddle--stable alternation.
\end{proof}

For the indexed separatrices below, assume $a<f'(u_0)$ and $f'(u_i)\neq a$ at every equilibrium. Additional speed restrictions will be specified after the Maxwell construction.

We are looking for trajectories connecting $(u_0, 0)$ to one of the other $K$ equilibrium points. We therefore
introduce some notation for the trajectories outgoing or incoming from these points. At each point $(u_i,0)$,
we denote the four quadrants centered at that point by
\begin{alignat*}{2}
Q_i^1 &= \bigl\{(u,v): u \geq u_i,\ v \geq 0\bigr\},
\qquad & Q_i^2 &= \bigl\{(u,v): u \leq u_i,\ v \geq 0\bigr\},\\
Q_i^3 &= \bigl\{(u,v): u \leq u_i,\ v \leq 0\bigr\},
\qquad & Q_i^4 &= \bigl\{(u,v): u \geq u_i,\ v \leq 0\bigr\}.
\end{alignat*}
For each fixed $u_0$ and wave speed $a$,  and each saddle equilibrium $u_i$, we define $C^j_{i,\alpha}$ to be the trajectory (not the solution)
leaving or entering (asymptotically) the quadrant $Q^j_i$ for a fixed diffusion-dispersion ratio $\alpha$.
 
\begin{figure}[!htb]
  \psfrag{B}[][][1]{$Q^1_i$}
  \psfrag{A}[][][1]{$Q^2_i$}
  \psfrag{C}[][][1]{$Q^3_i$}
  \psfrag{D}[][][1]{$Q^4_i$}
  \psfrag{u}[][][1]{$u$}
  \psfrag{v}[][][1]{$v$}
  \psfrag{F}[][][1]{$(u_k,0)$}
\centering \includegraphics[width=0.4\textwidth]{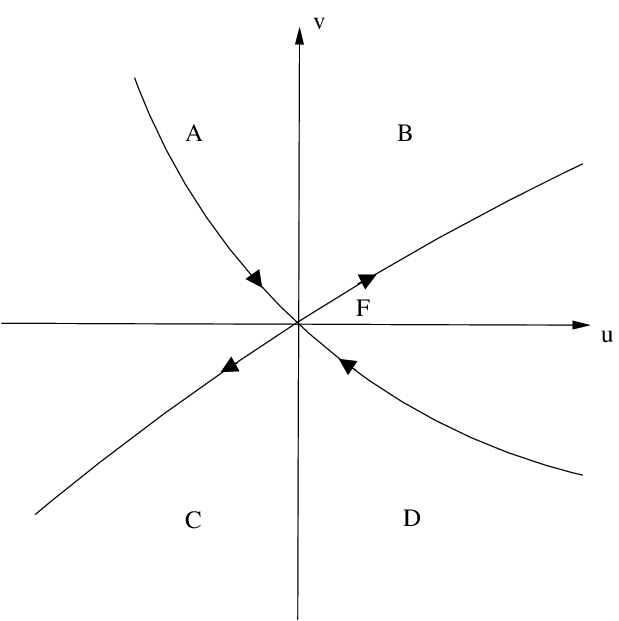}
\caption{\label{figsaddle} The four neighboring quadrants of a saddle equilibrium $(u_k,0)$.}
\end{figure}

\begin{lemma} Fix $u_0$, the speed $a$ and the diffusion-dispersion ratio $\alpha$.
Consider an equilibrium point $(u_i,0)$ of system \eqref{defsystemdiff2} and the four neighboring quadrants, then
if $(u_i,0)$ is a saddle equilibrium, there are two trajectories $C^2_{i,\alpha}$ and $C^4_{i,\alpha}$ approaching $(u_i,0)$ as  
$y\to +\infty$.  There are also two trajectories $C^1_{i,\alpha}$, $C^3_{i,\alpha}$ leaving  $(u_i,0)$ for $y \to -\infty$. 
\end{lemma}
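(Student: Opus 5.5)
The plan is to apply the stable manifold theorem for hyperbolic equilibria of planar $C^1$ vector fields, and then use the explicit eigenvectors \eqref{eigenvectors} to place the branches in quadrants. The field $T$ in \eqref{defKuv} is $C^2$ because $f$ is $C^3$. At a saddle $(u_i,0)$ we have $f'(u_i)-a>0$, so the eigenvalues satisfy
\[
\underline{\mu}_i<0<\overline{\mu}_i,
\]
since $\sqrt{(\alpha/2)^2+f'(u_i)-a}>\alpha/2$. The equilibrium is therefore hyperbolic.

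The stable and unstable manifold theorem (Hartman--Grobman together with the $C^1$ local invariant manifolds) then gives the local structure. There is a one-dimensional $C^1$ local stable manifold tangent to $\underline{r}_i=(1,\underline{\mu}_i)^T$ at $(u_i,0)$, and a one-dimensional local unstable manifold tangent to $\overline{r}_i=(1,\overline{\mu}_i)^T$. Each curve minus the equilibrium point consists of exactly two orbits. Every point of the local stable manifold tends to $(u_i,0)$ as $y\to+\infty$, and every point of the unstable manifold tends to it as $y\to-\infty$. Conversely, no other orbit converges to the saddle.

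Next, identify the quadrants. The stable direction $\underline{r}_i$ has $u$-component $1$ and $v$-component $\underline{\mu}_i<0$. Its two half-branches therefore lie, near the saddle, in $\{u<u_i,\,v>0\}$ and $\{u>u_i,\,v<0\}$, that is, in $Q_i^2$ and $Q_i^4$. This is also consistent with $u_y=v$: in $Q_i^2$ we have $v>0$, so $u$ increases toward $u_i$. The unstable direction $\overline{r}_i$ has slope $\overline{\mu}_i>0$, so its half-branches lie in $Q_i^1$ and $Q_i^3$. Because the manifolds are $C^1$ graphs over the $u$-axis near $u_i$ with these slopes, the branches stay strictly inside the open quadrants in a small neighbourhood. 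Extending each local branch by the flow gives the global trajectories $C^2_{i,\alpha}$, $C^4_{i,\alpha}$ (incoming) and $C^1_{i,\alpha}$, $C^3_{i,\alpha}$ (outgoing).

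The only delicate step is the quadrant assignment, which requires that the branches do not touch the axes $u=u_i$ or $v=0$ near the saddle. This follows from tangency to eigenvectors whose slopes $\underline{\mu}_i$ and $\overline{\mu}_i$ are nonzero and finite. It is the reason to use the genuinely hyperbolic case $f'(u_i)\neq a$ assumed above.
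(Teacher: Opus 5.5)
Your proposal is correct and follows essentially the same route as the paper, whose proof only invokes the eigenvectors \eqref{eigenvectors} and the sign condition $\underline{\mu}_i<0<\overline{\mu}_i$ at the saddle. You make explicit what the paper leaves implicit: the stable/unstable manifold theorem, and the graph-over-$u$ argument with nonzero slopes that places each branch strictly inside its open quadrant.
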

%

\begin{proof}  The result follows immediately from the calculation of
the eigenvectors \eqref{eigenvectors} and the fact that the eigenvalues satisfy $\underline{\mu}_i < 0 < \overline{\mu}_i$ at all 
equilibrium points.
\end{proof}

\section{Waves as a function of the diffusion-dispersion ratio}
\label{sec:4}

\subsection{A generalization of Maxwell's equal area rule}

A description of the travelling waves as a function of the diffusion-dispersion ratio 
will require many intermediate technical results. First, we will introduce a generalization
of Maxwell's equal area rule that allows us to constrain the solutions to a bounded
region of phase space. Afterwards, we analyze the solutions corresponding
to zero diffusion and, with the help of a comparison result between waves with 
different diffusion-dispersion ratios, identify these as absorbing regions for 
the underlying  system of differential equations. Finally, we will conclude
with a continuity result for waves at critical diffusion-dispersion ratios, that is ratios 
for which the solutions converge asymptotically to a saddle equilibrium point. 

Our objective is to study traveling wave solutions as a function of the diffusion-dispersion ratio $\alpha$
but the trajectory of these solutions in phase-space will be delimited by the regions of the solutions
corresponding to purely dispersive waves. More specifically, as in  \cite{BedChaCoqLeF}, a generalization of 
Maxwell's equal area rule  plays a fundamental role in characterizing such traveling waves. We will therefore 
require the following fundamental assumption on the flux and the entropy to ensure 
the existence of trajectories connecting saddle points to saddle points. 

\textbf{Maxwell's equal area rule}. {\it 
We assume that the flux $f$ and the entropy $E$ are such that 
for each fixed left-hand state $u_0$ inside an open interval $\mathcal{J}_0$, there exists a largest root $\varphi^{\flat}_0(u_0) < u_0$
of the entropy dissipation 
\begin{equation}    \label{hypothesis_E}
       E \big( \varphi^{\flat}_0(u_0), u_0 \big) = 0  \qquad \text{and} \qquad \partial_{u_+} E(\varphi^{\flat}_0(u_0), u_0 \big) \neq 0.
\end{equation}} 

This is an additional hypothesis: the asymptotic conditions in Definition~\ref{def-generic} alone do not ensure that the largest nontrivial zero of $E$ is simple. The derivative condition in~\eqref{hypothesis_E} permits continuation of that zero by the implicit function theorem. We note that the entropy dissipation for $f_5$ does not satisfy this condition, as seen in
Figure \ref{fig:entropydissipationf5}. Concerning the article by Bedjaoui, Chalons, 
Coquel and LeFloch \cite{BedChaCoqLeF} which guided much of this work, 
it is more accurate to state that they did not assume Maxwell's equal area rule
but the less general condition that the function $G$ possessed a unique largest root for 
a fixed $u, u_0$ and $a$. The formalism presented here is more consistent with the original approach
of LeFloch \cite{LeFlochbook}, where the zero-diffusion kinetic function $\varphi^{\flat}_0$ plays the fundamental
role of restricting the values of the admissible left-hand states.

\begin{figure}[!htb]
  \psfrag{E}{$E(  \, \cdot, u_0 )$}
  \psfrag{u}{$u$}
  \psfrag{u0}{$u_0$}
  \psfrag{phib0}{$\varphi^{\flat}_0(u_0)$}
  \psfrag{usharp1}{$u^{\natural}_1$}
  \psfrag{usharp2}{$u^{\natural}_2$}
   \psfrag{usharp3}{$u^{\natural}_3$}
\centering \includegraphics[width=0.6\textwidth]{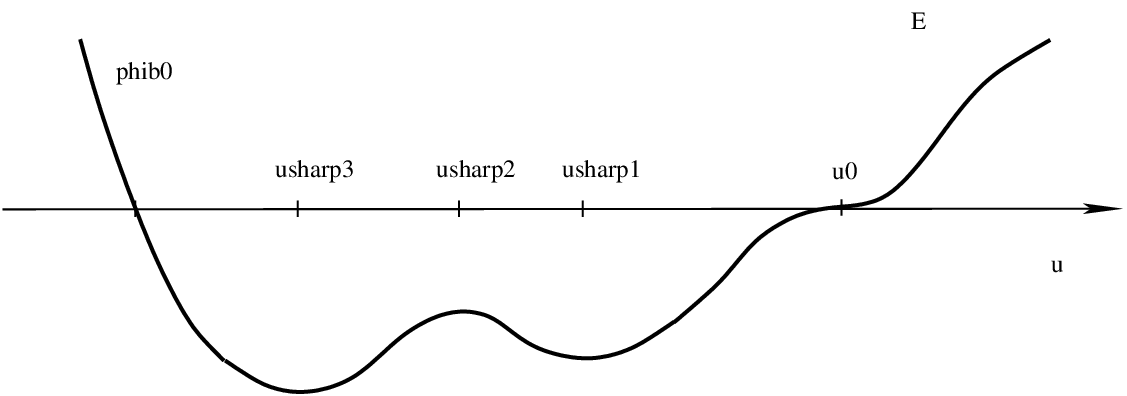}
\caption{Graph of the entropy $E(u,u_0)$ as a function of $u$ for a fixed $u_0$. The genericity condition
$\partial_{u_+} E( \varphi^{\flat}_0(u_0), u_0 \big) \neq 0$ is satisfied.}
\label{fig:entropy}
\end{figure}

\begin{lemma}
The function $\varphi^{\flat}_0$ is $C^2$ over $\mathcal{J}_0$.
\end{lemma}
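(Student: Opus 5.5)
The plan is to apply the implicit function theorem to the equation $E(u_+,u_0)=0$ near $u_+=\varphi^{\flat}_0(u_0)$. The first step is to check the regularity of $E$ in both variables. From \eqref{3.2}, $E(u_+,u_-)=-a(u_+,u_-)\big(U(u_+)-U(u_-)\big)+F(u_+)-F(u_-)$. Here $U$ is the quadratic \eqref{entropy:explicit}, and $F'=U'f'$ with $f\in C^3$, so $F\in C^4$. The only delicate factor is the Rankine--Hugoniot speed. Away from the diagonal $u_+=u_-$ it is the quotient of $C^3$ functions by a nonvanishing smooth denominator. Since $\varphi^{\flat}_0(u_0)<u_0$ strictly, we work in a neighbourhood of the point $(\varphi^{\flat}_0(u_0),u_0)$ that stays off the diagonal. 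There $a$ is $C^3$, and hence $E$ is $C^3$ jointly. (Alternatively, the integral representation $a(u_+,u_-)=\int_0^1 f'(u_-+s(u_+-u_-))\,ds$ shows $a\in C^2$ globally, which already suffices.)

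Next, fix $\bar u_0\in\mathcal{J}_0$. By \eqref{hypothesis_E}, $E(\varphi^{\flat}_0(\bar u_0),\bar u_0)=0$ and $\partial_{u_+}E\neq0$ there. The implicit function theorem, in its $C^k$ version with $k\ge2$, gives a neighbourhood $V$ of $\bar u_0$, a neighbourhood $W$ of $\varphi^{\flat}_0(\bar u_0)$, and a $C^2$ function $\psi:V\to W$ such that $\psi(\bar u_0)=\varphi^{\flat}_0(\bar u_0)$ and the zeros of $E(\cdot,u_0)$ in $W$ are exactly $\psi(u_0)$. The remaining step, which I expect to be the main obstacle, is to show that $\psi=\varphi^{\flat}_0$ near $\bar u_0$. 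The implicit function theorem only controls the zeros inside $W$, whereas $\varphi^{\flat}_0$ is defined as the \emph{largest} root below $u_0$.

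To handle this, I would use compactness and continuity. Choose $\delta>0$ with $[\varphi^{\flat}_0(\bar u_0)-\delta,\varphi^{\flat}_0(\bar u_0)+\delta]\subset W$ and $\varphi^{\flat}_0(\bar u_0)+\delta<\bar u_0$. By Proposition~\ref{thm1}(iii), there is $\eta>0$ such that $E(\cdot,u_0)<0$ on $[u_0-\eta,u_0)$, uniformly for $u_0$ near $\bar u_0$. This uniformity needs justification: $\partial_{u_+}E$ at the diagonal equals the limit of $b\,\partial_{u_+}a$, and $\partial_{u_+}a\to f''(u_0)/2>0$ because $u_0>I_1$. One can also use the explicit form in Lemma~\ref{lem:entropyequivalent} to get a uniform local sign. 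On the compact set $[\varphi^{\flat}_0(\bar u_0)+\delta,\bar u_0-\eta/2]$, the function $E(\cdot,\bar u_0)$ has no zeros, by maximality of $\varphi^{\flat}_0(\bar u_0)$. So $|E|$ is bounded below there, and by joint continuity $E(\cdot,u_0)$ has no zeros on a slightly shifted compact set for $u_0$ near $\bar u_0$.

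Combining these facts, for $u_0$ near $\bar u_0$ the largest root of $E(\cdot,u_0)$ below $u_0$ lies in $W$, so it equals $\psi(u_0)$. Hence $\varphi^{\flat}_0=\psi$ is $C^2$ near $\bar u_0$. Since $\bar u_0\in\mathcal{J}_0$ was arbitrary, $\varphi^{\flat}_0$ is $C^2$ on all of $\mathcal{J}_0$.
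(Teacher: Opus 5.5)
Your proof is correct and follows the same route as the paper: the implicit function theorem applied to $E(u_+,u_0)=0$, using the regularity of $E$ and the transversality $\partial_{u_+}E\neq0$ supplied by Maxwell's rule. Your additional step rules out new zeros in $(\psi(u_0),u_0)$ by compactness together with the uniform negativity of $E$ near the diagonal, which follows from $\partial_{u_+}a(u_+,u_0)=\int_0^1 s\, f''(u_0+s(u_+-u_0))\,ds\to f''(u_0)/2>0$. This step is genuinely needed to identify the implicit branch with the \emph{largest} root, a point the paper's one-line proof passes over.
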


\begin{proof}
The function $\varphi^{\flat}_0$ is the root of $E$, where we use its description in \eqref{entrop3.2}
as an integral depending on $f$ and $U''$ and whose
limits of integration depend on $u$ and $u_0$. The root is therefore $C^2$ since the entropy dissipation is $C^2$ 
and the derivative condition means that the intersection is transversal with respect to the upper limit of integration
and the function $a = a(u,u_0)$ in the  integrand.  
\end{proof}

To be useful, the above condition on the entropy must be related to the system \eqref{defsystemdiff2}. 
We recall the potential $G$ defined in the equation~\eqref{defn:G}, with $u_0=u_-$.
With this function, the system \eqref{defsystemdiff2} can be rewritten in the form
\be 
\aligned
u_y&=v,\\
v_y&=-\alpha v+\partial_u G(u;u_0,a), \label{Agsystemb}
\endaligned
\ee
with the boundary conditions \eqref{defcond}.
Critically, Lemma \ref{lem:equilibrium} shows that the derivative $\partial_uG(u; u_0, a)$ vanishes exactly at the equilibria $u_k,$ $k=0,1,\ldots,K,$
satisfying \eqref{equilibria}. Furthermore, we will show that for the simple entropy \eqref{entropy:explicit} the functions $G$ and $E$ are related by
\begin{equation}  \label{relation_E_G}
           c \, G \big( u;u_0,a(u,u_0) \big) = -E(u,u_0),
\end{equation}
where $c$ is the constant in the definition of the entropy. This requires a careful calculation
which begins with the formula for $E$ given in Lemma \ref{lem:entropyequivalent}.
\begin{align*}
             E(u,u_0) = & \, -c \int_{u_0}^u (v-u_0) \big( a(v,u_0) - a(u,u_0) \big) \, dv \\
             = & \, -c \int_{u_0}^u \big( f(v) - f(u_0) -a(u,u_0) (v-u_0) \big) \, dv \\
             = & \, -c \int_{u_0}^u \Big[ \big( f(v) - a(u,u_0) v\big) - \big(  f(u_0) - a(u,u_0) u_0 \big)  \Big] \, dv \\
             = & \, - c \int_{u_0}^u \Big( g\big( v,a(u,u_0) \big) - g\big( u_0,a(u,u_0) \big) \Big) \, dv  = - c \, G\big( u;u_0,a(u,u_0) \big) 
\end{align*}

%
%
\begin{lemma}\label{propG}
For $u_0 >  I_1$ and an entropy \eqref{entropy:explicit}, 
the function $G (u;u_0,a)$ has the following properties.
\begin{itemize}
\item[i)] $G(u_0;u_0;a) = 0$
\item[ii)] For all $u \in ] \varphi^{\flat}_0(u_0), u_0 [$, $G \big( u;u_0,a(u,u_0) \big) > 0 $.
\item[iii)]  The derivative with respect to $a$ is
\begin{equation}     \label{eq:derivativedaG}
       \partial_a G(u;u_0,a) = - \frac{(u-u_0)^2}{2}.
\end{equation}
\item[iv)] The derivative with respect to $u$ is
\begin{equation}     \label{eq:derivativeduG}
       \partial_u G(u;u_0,a) =  (u-u_0) \big[ a(u,u_0) - a \big]\,,
\end{equation}
from which we deduce that $G$ changes monotonicity with respect 
to $u$ only at the equilibria $u=u_k$ for some $k=0,1,\ldots,K$. To be specific, if $u_k$
is a saddle equilibrium, i.e. if $k$ is even, then $\partial_u G$ is negative over
$]u_{k+1},u_k [$ and positive over $]u_k,u_{k-1}[$.
\end{itemize}
\end{lemma}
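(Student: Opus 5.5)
The plan is to work directly from the definition \eqref{defn:G} of $G$ and to handle the four items in the order (i), (iii), (iv), (ii), since (ii) is the only one that uses something beyond elementary calculus.

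\emph{Items (i) and (iii).} Item (i) is immediate, because the integral in \eqref{defn:G} has equal limits when $u=u_0$. For (iii), the integrand $g(z,a)-g(u_0,a)=f(z)-f(u_0)-a(z-u_0)$ is affine in $a$. Differentiating under the integral sign therefore gives
\[
\partial_a G(u;u_0,a)=-\int_{u_0}^u (z-u_0)\,dz=-\frac{(u-u_0)^2}{2}.
\]

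\emph{Item (iv).} The fundamental theorem of calculus gives
\[
\partial_uG(u;u_0,a)=f(u)-f(u_0)-a(u-u_0)=(u-u_0)\big[a(u,u_0)-a\big],
\]
where the last form uses the definition of the Rankine--Hugoniot speed for $u\neq u_0$; at $u=u_0$ both sides vanish. Its zeros are exactly the solutions of \eqref{equilibria}, that is, the equilibria $u_k$ of Lemma~\ref{lem:equilibrium}. Hence $\partial_uG$ has constant sign between consecutive equilibria, and $G$ can change monotonicity only at the $u_k$.

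For the sign pattern, I note that $G''(u)=f'(u)-a$ by differentiating once more. Take a saddle $u_k$, which has $k$ even by Lemma~\ref{remark1}. There $G''(u_k)=f'(u_k)-a>0$ by Lemma~\ref{lem:type-equilibrium}, and since $\partial_uG(u_k)=0$, $\partial_uG$ crosses zero from negative to positive at $u_k$. Because the zeros are simple and there are no other zeros between consecutive equilibria, $\partial_uG<0$ on all of $]u_{k+1},u_k[$ and $\partial_uG>0$ on all of $]u_k,u_{k-1}[$. For $k=0$ only the left interval is relevant. As a consistency check, this matches $q'(u_0)>0$ in the proof of Lemma~\ref{remark1}.

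\emph{Item (ii).} I will use the identity \eqref{relation_E_G}, so that $G(u;u_0,a(u,u_0))=-E(u,u_0)/c$ with $c>0$. It then suffices to show $E(u,u_0)<0$ on $]\varphi^\flat_0(u_0),u_0[$.
\begin{itemize}
\item By Proposition~\ref{thm1}(iii), $E(\cdot,u_0)$ is negative for arguments slightly smaller than $u_0$.
\item By the Maxwell hypothesis \eqref{hypothesis_E}, $\varphi^\flat_0(u_0)$ is the largest root of $E(\cdot,u_0)$ below $u_0$.
\item Continuity of $E$ then forces $E$ to keep its sign on the whole open interval $]\varphi^\flat_0(u_0),u_0[$; otherwise the intermediate value theorem would produce a larger root.
\end{itemize}
This gives the claimed positivity of $G$.

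The main subtlety is making sure that "largest root" in the hypothesis means the largest zero strictly below $u_0$, excluding the trivial root $u_0$ itself. Everything else is routine.
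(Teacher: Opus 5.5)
Your proof is correct and follows essentially the same route as the paper: (iii) and (iv) by direct differentiation of \eqref{defn:G}, and (ii) via the identity \eqref{relation_E_G} together with the choice of $\varphi^\flat_0(u_0)$ as the largest root of $E(\cdot,u_0)$ below $u_0$. You make explicit what the paper leaves implicit, namely the negative sign of $E$ just below $u_0$ from Proposition~\ref{thm1}(iii) and the sign pattern in (iv) via $G''(u_k)=f'(u_k)-a>0$ at saddles; your check of (i) directly from the definition is also cleaner than deducing it from \eqref{relation_E_G}, which only covers $a=f'(u_0)$.
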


\begin{proof}
Both the first and second properties follow from the identity \eqref{relation_E_G}, specifically because 
$\varphi^\flat_0(u_0)$ is defined to be the first root less than $u_0$. To verify the third property, first notice that
$\partial_a g(u,a) = -u$ and so
\begin{align*}
      \partial_a G(u;u_0,a) = & \, \int_{u_0}^u \big(-z -(-u_0) \big) \, dz = - \frac{(u - u_0)^2}{2}.
\end{align*}
For the fourth property, we compute the derivative
\begin{align*}
         \partial_u G(u;u_0,a) = & \, g(u,a) - g(u_0,a) \\
          = & \, \frac{f(u)-f(u_0)}{u-u_0} \cdot (u-u_0) - a( u - u_0) 
          =  \, (u-u_0) \cdot \big( a(u,u_0) - a \big).
\end{align*}
The changes in monotonicity occur at the equilibria because of Lemma \ref{lem:equilibrium}.
\end{proof}

\begin{figure}[!htb] 
\centering
\begin{minipage}{.5\textwidth}
     \centering \includegraphics[width=1.0\textwidth]{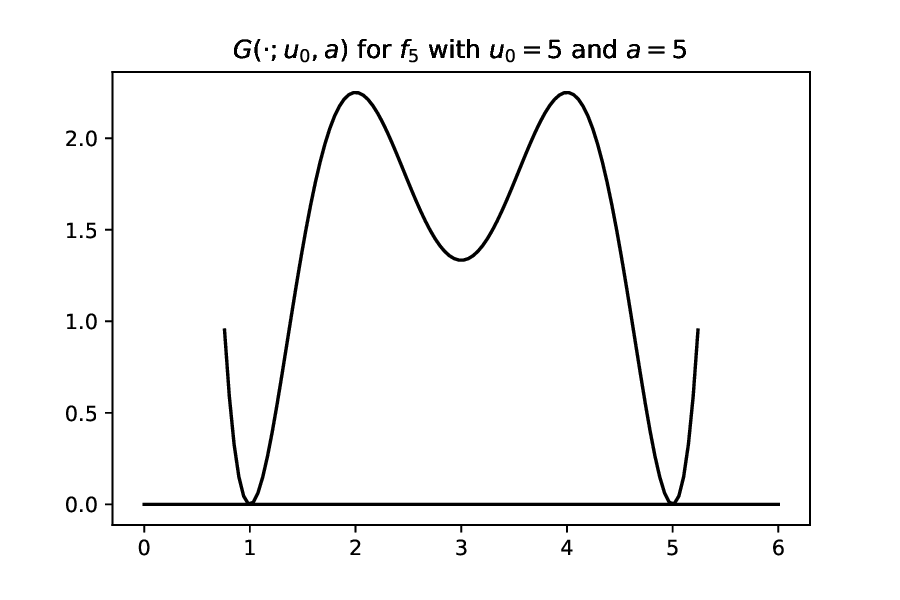} 
\end{minipage}%
\begin{minipage}{.5\textwidth}
       \centering \includegraphics[width=1.0\textwidth]{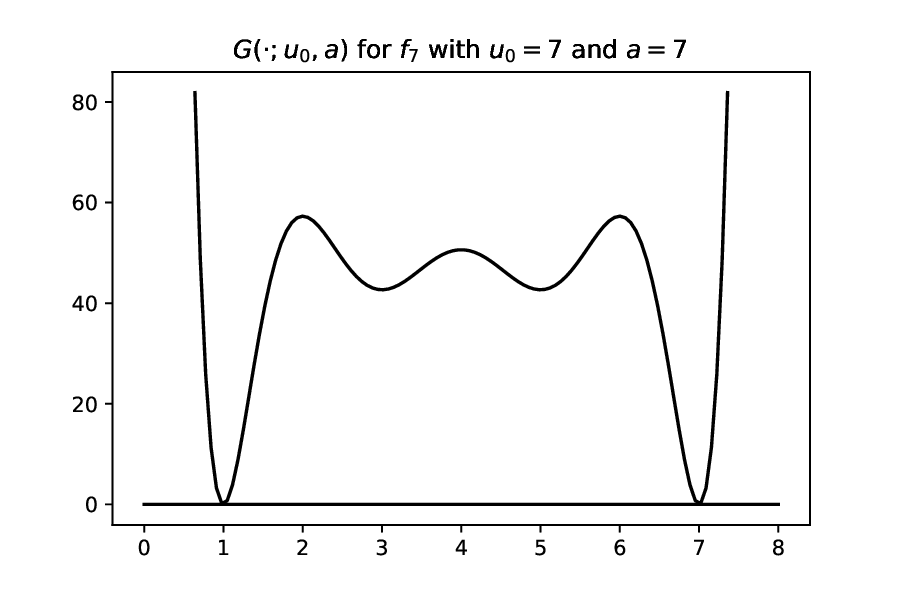}
\end{minipage} 
\caption{For $f_5$ on the left, and $f_7$ on the right, we present the graphs of $G(u;u_0,a)$.
Above, we assume that the entropy is quadratic $U(u) = u^2/2$ and on the left, $u_0 = 5$ and $a=5$,
while on the right $u_0=7$ and $a=7$.}
\label{fig:G}
\end{figure}

\begin{example}
For the flux $f_7$ and the quadratic entropy, the function $G$ can be computed numerically and it's
graph is given in Figure \ref{fig:G} for $u_0=7$ and $a=7$.
\end{example}

\begin{lemma}   \label{lem:exist_tangency}
If Maxwell's equal area rule holds, then there exists at least one tangency point $u^{\natural}$
between $\varphi^{\flat}_0(u_0)$ and $u_0$.
\end{lemma}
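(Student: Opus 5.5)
We will apply Rolle's theorem to $E(\cdot,u_0)$ on the interval $[\varphi^{\flat}_0(u_0),u_0]$, and then translate the resulting critical point into a tangency point using Proposition~\ref{thm1}(iv). Write $u^\flat=\varphi^{\flat}_0(u_0)$. By Proposition~\ref{thm1}(i) and Maxwell's equal area rule~\eqref{hypothesis_E}, both endpoints are zeros:
\[
E(u_0,u_0)=0,\qquad E(u^\flat,u_0)=0 .
\]
The function $E(\cdot,u_0)$ is at least $C^1$, since $a(\cdot,u_0)$ is $C^1$ away from $u_0$ and extends smoothly through $u_0$ (indeed it is as regular as $f$). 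Rolle's theorem therefore gives a point $u^{\natural}\in\,]u^\flat,u_0[$ with $\partial_{u_+}E(u^{\natural},u_0)=0$.

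It remains to check that this critical point is a genuine tangency point. In the proof of Proposition~\ref{thm1} we showed
\[
\partial_{u_+}E(u_+,u_0)=b(u_+,u_0)\,\partial_{u_+}a(u_+,u_0),
\]
where $b>0$ for $u_+\neq u_0$ by strict convexity of $U$. Since $u^{\natural}\neq u_0$, we get $\partial_{u_+}a(u^{\natural},u_0)=0$. By~\eqref{thm1:eq1} this means $f'(u^{\natural})=a(u^{\natural},u_0)$, which is exactly the tangency condition~\eqref{defn:tangency}.

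The only step that needs care is ruling out $u^{\natural}=u_0$, and the open interval from Rolle's theorem handles this automatically. One could instead use $G$ via~\eqref{relation_E_G}, but $a$ varies with $u$ there, so working with $E$ directly is cleaner.

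A more hands-on alternative avoids Rolle on $E$. By Proposition~\ref{thm1}(iii), $E(\cdot,u_0)$ is negative and increasing just to the left of $u_0$. Since it returns to $0$ at $u^\flat$, it must be decreasing somewhere in $]u^\flat,u_0[$ as one moves right. So $\partial_{u_+}E$ changes sign in that interval, and by the sign formula~\eqref{eq:signE}, $a(\cdot,u_0)-f'$ changes sign there as well. Continuity then yields a zero, which is again a tangency point.
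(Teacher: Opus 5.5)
Your proof is correct and follows the paper's argument: Rolle's theorem applied to $E(\cdot,u_0)$ on $[\varphi^\flat_0(u_0),u_0]$, with the interior critical point identified as a tangency point through Proposition~\ref{thm1}, because it differs from $u_0$. Your use of the factorization $\partial_{u_+}E=b\,\partial_{u_+}a$ with $b>0$ simply spells out the step the paper takes from that proposition.
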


\begin{proof}
The function $E(\cdot,u_0)$ vanishes at both $\varphi_0^\flat(u_0)$ and $u_0$. Rolle's theorem gives an interior point where its derivative vanishes. Since this point differs from $u_0$, Proposition~\ref{thm1} identifies it as a tangency point. Its secant speed need not equal the speed $a$ fixed for the phase-plane system.
\end{proof}

In order to characterize non-classical shocks, we will find trajectories to \eqref{Agsystemb}
by identifying invariants regions defined by the sign of $G$. To be precise, we notice that 
if $y \mapsto \big( u(y), v(y)  \big)^T$ is a solution to \eqref{Agsystemb}, then multiplying the equation
for $v_y$ in \eqref{Agsystemb} by $v(y) = u_y(y)$ and integrating from $- \infty$ to $y$, we find
\begin{equation}      \label{pos_constraint2}
        \int_{-\infty}^y \frac{d}{d y'} \bigg(  \frac{v^2(y')}{2} \bigg) \, dy'
          + \alpha \int_{-\infty}^y v^2(y') \, dy'
          = \int_{-\infty}^y \partial_u G\big(  u(y'); u_0,a \big) u_y(y') \, dy'. 
\end{equation}
Applying the boundary conditions \eqref{defcond} and property {\it i)} of
Lemma \ref{propG}, we find
\begin{equation}   \label{pos_constraint}
         \frac{v^2(y)}{2} + \alpha \int_{-\infty}^y v^2(y') \, dy' =  G\big( u(y);  u_0, a \big).
\end{equation}
This identity shows that $G\big( u(y);u_0,a \big)$ must remain positive along a trajectory,
so this motivates our quest to study the sign of $G$.

We begin our analysis by recalling the second property of Lemma \ref{propG} and interpret
it as saying that over the graph of $u \mapsto a(u,u_0)$ in the $(u,a)$-plane, the function $G(u;u_0,a(u,u_0))$
is positive and vanishes at $u=\varphi^\flat_0(u_0)$ by identity \eqref{relation_E_G}. Since the derivative
$\partial_a G(u;u_0,a)$ is negative and independent of $a$, for each $ u\in(\varphi_0^\flat(u_0),u_0)$ there
exists $a^*=a^*(u,u_0) > a(u,u_0)$ at which $G(u;u_0,a^*)=0$. Using the identities  \eqref{relation_E_G}
and \eqref{eq:derivativedaG}, we find that
\begin{align*}
      -\frac{1}{c} E(u,u_0) = & \, G\big( u;u_0,a(u,u_0) \big) 
      = \,  G\big( u;u_0,a(u,u_0) \big) - G( u;u_0,a^* ) \\
      = & \, \int_{a^*}^{a(u,u_0)} \frac{\partial G}{\partial a}(u;u_0,a') \, da' 
      = \, - \frac{(u-u_0)^2}{2} \cdot \big(  a(u,u_0) - a^* \big).
\end{align*}
Solving for $a^*$, we obtain the explicit expression
\begin{equation}     \label{eq:explicitastar}
     a^*(u,u_0) = a(u,u_0) - \frac{2}{c(u-u_0)^2} E(u,u_0).
\end{equation}
For $u = \varphi^\flat_0(u_0)$, the property $E(\varphi^\flat_0(u_0),u_0) = 0$ implies that
$$
      a^* \big( \varphi^\flat_0(u_0), u_0\big) = a \big( \varphi^\flat_0(u_0), u_0 \big).
$$
To show that the expected identity $a^*(u_0,u_0) = a(u_0,u_0) = f'(u_0)$ holds, it suffices to show that
${
      E(u,u_0) = \mathcal{O}\big( (u-u_0)^3 \big),
}$
which we do with a Taylor series expansion 
\begin{align*}
      E(u,u_0) = & \, -a(u,u_0) \cdot \big( U(u)- U(u_0) \big) + \big( F(u)- F(u_0) \big) \\
      = & \, - \Big[ f'(u_0) + \frac{f''(u_0)}{2} (u-u_0) + \mathcal{O}\big( (u-u_0)^2 \big) \Big] \\
          & \, \times \Big[ U'(u_0) (u-u_0) + \frac{U''(u_0)}{2}(u-u_0)^2 + \mathcal{O}\big( (u-u_0)^3 \big)  \Big] \\
         & \, + \Big[ F'(u_0) (u-u_0) + \frac{F''(u_0)}{2}(u-u_0)^2 + \mathcal{O}\big( (u-u_0)^3 \big)  \Big] \\
      = & \, - \Big[ f'(u_0) U'(u_0) (u-u_0) + \frac{f''(u_0)}{2} U'(u_0) (u-u_0)^2 \\
          & \, \phantom{-\Big[} + f'(u_0) \frac{U''(u_0)}{2} (u-u_0)^2 
                +  \mathcal{O}\big( (u-u_0)^3 \big) \Big] \\
        & \,   +  \Big[ F'(u_0) (u-u_0) + \frac{F''(u_0)}{2}(u-u_0)^2 + \mathcal{O}\big( (u-u_0)^3 \big)  \Big].         
\end{align*}
Using the fact that $F'(u) = f'(u) \cdot U'(u)$, we can simplify and deduce
\begin{align*}
      E(u,u_0) = & \, \frac{(u-u_0)^2}{2} \cdot \Big[ -f''(u_0) U'(u_0) - f'(u_0) U''(u_0) + F''(u_0) \Big]
                              +  \mathcal{O}\big( (u-u_0)^3 \big) \\
                     = & \,  \mathcal{O}\big( (u-u_0)^3 \big),
\end{align*}
which when substituted into \eqref{eq:explicitastar} proves $a^*(u_0,u_0)=f'(u_0)$.
Furthermore, we can compute the derivative of $a^*$ with respect to $u$ by starting with the
implicit differentiation of the identity $G\big( u;u_0,a^*(u,u_0) \big) = 0$ to find
$$
        0 = \frac{\partial G}{\partial u} (u;u_0,a^*) + \frac{\partial G}{\partial a} (u;u_0,a^*) \frac{\partial a^*}{\partial u}(u,u_0).
$$
Isolating the derivative and then using the identities \eqref{eq:derivativedaG}, \eqref{eq:derivativeduG},
and \eqref{eq:explicitastar}, we conclude that
\begin{align}
        \frac{\partial a^*}{\partial u}(u, u_0) = & \, \frac{2}{u-u_0}\big[ a(u,u_0) - a^*(u,u_0) \big] \nonumber \\
           = & \,  \frac{4}{c(u-u_0)^3} E(u,u_0) >0.  \label{ineq:dastardu}
\end{align}
This shows that for all $u \in ] \varphi^\flat_0(u_0),u_0[$, $a^*$ is increasing with respect to $u$. 
To justify the regularity also on the diagonal $u=u_0$, we use the integral representation
\be
\label{eq:astar-integral-regularity}
 a^*(u,u_0)=2\int_0^1(1-s)f'\big(u_0+s(u-u_0)\big)\,ds.
\ee
It follows directly from~\eqref{defn:G} by a change of variable and integration in $s$, and is $C^2$ jointly in $(u,u_0)$ because $f\in C^3$.
These conclusions are summarized in the next lemma and Figure \ref{fig:astar}.

\begin{figure}[!htb]
  \psfrag{au}{$a( \, \cdot, u_0 )$}
  \psfrag{astar}{$a^*(\, \cdot \,, u_0)$}
  \psfrag{u}{$u$}
  \psfrag{a}{$a$}
  \psfrag{uequal}{$u=\varphi^{\flat}_0(u_0)$}
  \psfrag{aequal}{$a_0 :=a(\varphi^{\flat}_0(u_0), u_0)$}
  \psfrag{origin}{$(u_0,f'(u_0))$}
\centering \includegraphics[width=0.6\textwidth]{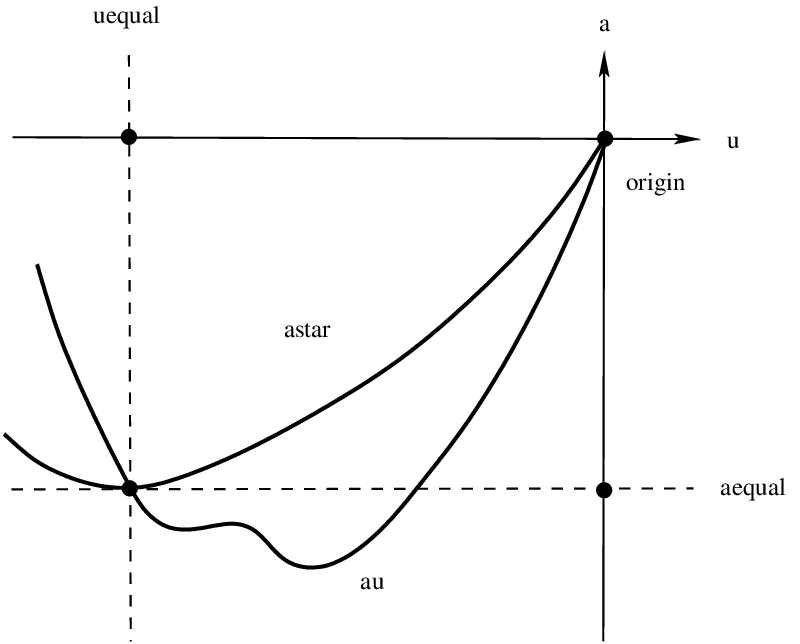}
\caption{Graph of the function $a^*( \cdot,u_0)$ satisfying $G(u;u_0,a^*(u,u_0)) = 0$.}
\label{fig:astar}
\end{figure}

\begin{lemma}  \label{lem:astar}
If Maxwell's equal area rule holds and the entropy is given by \eqref{entropy:explicit}, the function
$a^*: \mathbb{R} \times \mathcal{J}_0 \rightarrow \mathbb{R}$ given by \eqref{eq:explicitastar}
has the following properties.
\begin{itemize}
\item[i)] $a^*$ is $C^2$.
\item[ii)] For each fixed $u_0 \in \mathcal{J}_0$ and $u \in \mathbb{R}$, $G\big( u;u_0,a^*(u, u_0) \big) = 0$. 
\item[iii)] We have the constraints $a^*(u_0,u_0) = f'(u_0) = a(u_0,u_0)$, $a^*\big( \varphi^\flat_0(u_0), u_0 \big) = a(\varphi^\flat_0(u_0),u_0)$ and 
$$
           a^*(u,u_0) > a(u,u_0), \quad \forall u \in ]\varphi^\flat_0(u_0),u_0[.
$$ 
\item[iv)] For each fixed $u_0 \in \mathcal{J}_0$ and $u \in ]\varphi^\flat_0(u_0),u_0[$,
$$
        \frac{\partial a^*}{\partial u}(u,u_0) > 0.
$$
\end{itemize} 
\end{lemma}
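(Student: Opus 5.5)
The plan is to take the integral representation \eqref{eq:astar-integral-regularity} as the definition of $a^*$ on all of $\mathbb{R}\times\mathcal{J}_0$. From it, properties i) and ii) are immediate, and properties iii) and iv) then follow from the computations already carried out before the statement.

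\emph{Step 1 (regularity and agreement with \eqref{eq:explicitastar}).} First I verify \eqref{eq:astar-integral-regularity}. Since $\partial_a G=-(u-u_0)^2/2$ is independent of $a$, the equation $G(u;u_0,a)=0$ is affine in $a$ and, for $u\neq u_0$, has the unique solution
\[
a=\frac{2}{(u-u_0)^2}\int_{u_0}^u\big(f(z)-f(u_0)\big)\,dz .
\]
Substituting $z=u_0+t(u-u_0)$, writing $f(z)-f(u_0)=(u-u_0)\int_0^t f'(u_0+s(u-u_0))\,ds$, and exchanging the order of integration gives $2\int_0^1(1-s)f'(u_0+s(u-u_0))\,ds$.

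Next I use \eqref{relation_E_G}, that is $cG(u;u_0,a(u,u_0))=-E(u,u_0)$, together with the fact that $G$ is affine in $a$ with slope $-(u-u_0)^2/2$. This shows the unique root is exactly $a(u,u_0)-\frac{2E(u,u_0)}{c(u-u_0)^2}$, i.e. \eqref{eq:explicitastar}. So both formulas define the same function off the diagonal, and the integral formula extends it across $u=u_0$.

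Since $f\in C^3$, $f'$ is $C^2$, so differentiation under the integral (the integrand has compact parameter domain $[0,1]$) yields joint $C^2$ regularity. This proves i), and ii) holds by construction for $u\neq u_0$. At $u=u_0$, ii) reads $G(u_0;u_0,a)=0$, which is property i) of Lemma~\ref{propG}.

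\emph{Step 2 (constraints iii)).} Setting $u=u_0$ in the integral formula gives $a^*(u_0,u_0)=2f'(u_0)\int_0^1(1-s)\,ds=f'(u_0)$, which bypasses the Taylor expansion. At $u=\varphi_0^\flat(u_0)$, the identity $E(\varphi_0^\flat(u_0),u_0)=0$ from \eqref{hypothesis_E} in \eqref{eq:explicitastar} gives $a^*=a$. For $u\in\,]\varphi_0^\flat(u_0),u_0[$, Lemma~\ref{propG}\,ii) gives $G(u;u_0,a(u,u_0))>0$, so $E(u,u_0)<0$ by \eqref{relation_E_G}. Hence $-2E/(c(u-u_0)^2)>0$ and $a^*>a$.

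\emph{Step 3 (monotonicity iv)).} Implicit differentiation of $G(u;u_0,a^*)=0$, using \eqref{eq:derivativedaG} and \eqref{eq:derivativeduG}, gives \eqref{ineq:dastardu}:
\[
\partial_u a^*=\frac{4E(u,u_0)}{c(u-u_0)^3}.
\]
On the interval in question both $E<0$ and $(u-u_0)^3<0$, so $\partial_u a^*>0$.

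The only delicate point is Step 1: the regularity across the diagonal and the identification of the two formulas. The explicit formula \eqref{eq:explicitastar} is singular at $u=u_0$, so i) must rely on the integral representation. I expect this to be routine once the change of variables is written out.
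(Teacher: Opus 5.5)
Your proposal is correct and follows the paper's argument. You obtain \eqref{eq:explicitastar} from the affine dependence of $G$ on $a$ together with \eqref{relation_E_G}, deduce iii) from the sign of $E$, get iv) by implicit differentiation giving \eqref{ineq:dastardu}, and use \eqref{eq:astar-integral-regularity} for regularity across the diagonal. The only difference is that you read $a^*(u_0,u_0)=f'(u_0)$ directly off the integral representation, which makes the paper's Taylor expansion showing $E(u,u_0)=\mathcal{O}\big((u-u_0)^3\big)$ unnecessary.
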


One of the consequences of this result is that $a^*(u,u_0)$ is invertible for
every $u$ in $[\varphi^\flat_0(u_0), u_0]$, which we denote as
$$
            u^*(a,u_0) := \big( a^*(\cdot, u_0) \big)^{-1}(a).
$$
In this case, for all $u\in]u^*(a,u_0),u_0[$ we have $G(u;u_0,a)>0$, and $\partial u^*/\partial a(a,u_0)>0$ for $a(\varphi_0^\flat(u_0),u_0)<a<f'(u_0)$. 
While for convex conservation laws the states  connected to $u_0$ are
parametrized by the wave speed, the function $a^*(u,u_0)$ performs
a similar function by delimiting the set of states for which we can look for
travelling waves. As Lemma \ref{lem:astar} has shown, it will be convenient to
introduce some notation for the critical wave speed
$$
      a_0 := a\big( \varphi^\flat_0(u_0), u_0 \big).
$$

In the remaining saddle constructions, the prescribed assumptions on the speed $a$ are
\be
\label{hyp:speed-assumptions}
     a_0<a<f'(u_0),\qquad f'(u_i)\neq a\quad\text{for every equilibrium }u_i.
\ee
Endpoints of this interval are treated only when specified explicitly.

The prescribed speed assumptions~\eqref{hyp:speed-assumptions} include the restriction $a>a_0$, beyond hyperbolicity. For $f(u)=u^3$ and $u_0=1$, the explicit profile $u(y)=1/5-(4/5)\tanh(4y/(5\sqrt2))$ has $a=19/25$ and $\alpha=3\sqrt2/5$, whereas $a_0=1$. This undercompressive profile is outside the speed range classified here.

We make a final observation about the behavior of $G$ at $u^*$ because it
could be tempting to assume that it is always an equilibrium. For $a_0<a<f'(u_0)$, we have that 
$$
   0 = G(u^*(a,u_0);u_0,a).
$$  
Taking the implicit derivative with respect to $a$, and using \eqref{ineq:dastardu},
we find
\begin{align*}
    0 = & \, \partial_u G(u^*;u_0,a) \frac{\partial u^*}{\partial a}(a,u_0) + \partial_a G(u^*;u_0,a) \\
    = & \, \partial_u G(u^*;u_0,a ) \frac{c( u^*-u_0)^3}{4 E( u^*,u_0)} - \frac{( u^*-u_0)^2}{2}.
\end{align*}
This leads to the identity
\begin{equation}    \label{eq:explicitdG}
     \partial_u G(u^*;u_0,a) = \frac{2 E(u^*,u_0)}{c( u^* -u_0)}.
\end{equation}
In particular, under the prescribed speed assumptions~\eqref{hyp:speed-assumptions} $u^*\neq\varphi^\flat_0(u_0)$
and $\partial_u G(u^*;u_0,a)>0$, $u^*$ is not an equilibrium point.


\subsection{Properties of diffusive-dispersive trajectories}
\label{sec:5}

We now present a series of fundamental results that will underpin the proof of our main result.
These results describe some of the elementary properties of solutions to
the system \eqref{Agsystemb}, first with the asymptotic properties of these solutions.
We then focus on the zero-diffusion solutions which bound absorbing regions of
the system. This is then followed with a monotonicity lemma and continuity results
on the variation of these solutions with respect to the diffusion-dispersion ratio.


\begin{lemma}   \label{lem:convergence2saddle}
Suppose that Maxwell's equal area rule holds, the entropy is given by \eqref{entropy:explicit}, $\alpha>0$, and $a_0\leq a<f'(u_0)$. The nonconstant solution on the left outgoing branch $C^3_{0,\alpha}$ is defined for all $y\in\RR$, remains in $u^*(a,u_0)<u(y)<u_0$, and converges as $y\to+\infty$ to an equilibrium $(u_k,0)$ strictly inside this interval. More generally, every forward bounded solution of \eqref{Agsystemb} converges to an equilibrium if the bounded region contains finitely many equilibria.
\end{lemma}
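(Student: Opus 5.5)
The plan is a Lyapunov argument built on the energy $H(u,v)=v^2/2-G(u)$ from \eqref{eq:lyapunov-correction} and the identity \eqref{pos_constraint}. Along $C^3_{0,\alpha}$ we have $H\to 0$ as $y\to-\infty$ and $dH/dy=-\alpha v^2\le 0$. Hence
$$
\frac{v^2}{2}+\alpha\int_{-\infty}^y v^2\,dy' = G(u(y)),
$$
and for a nonconstant solution the integral is strictly positive for every finite $y$. So $G(u(y))>0$ for all $y$ in the maximal interval of existence.

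\textbf{Confinement.} Near $u_0$ the branch leaves along the eigenvector $\underline{r}_0$ direction into $Q^3_0$, so $u<u_0$ initially. Lemma~\ref{lem:astar} and the discussion after it give $G(\cdot;u_0,a)>0$ exactly on $]u^*(a,u_0),u_0[$ in a neighbourhood to the left of $u_0$, with $G(u^*)=0$ and $G(u_0)=0$. I still need to check that $G\le 0$ just beyond these endpoints. Past $u_0$, this follows from $\partial_uG>0$ on $]u_0,\infty[$ near $u_0$ and from saddle behaviour; more simply, $u$ cannot return to $u_0$ because $G(u_0)=0<G(u(y))$. Since $u$ is continuous and $G(u(y))>0$ strictly, $u$ can never reach $u^*$ or $u_0$. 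Thus $u^*<u(y)<u_0$.

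\textbf{Global existence.} The bound $v^2\le 2\max_{[u^*,u_0]}G$ then follows. The trajectory stays in a compact set, so the solution exists for all $y\in\RR$.

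\textbf{Convergence.} I would use LaSalle's invariance principle. $H$ is bounded below on the compact set and nonincreasing, so $\int_{-\infty}^\infty v^2<\infty$ and $H(y)\to H_\infty$. The $\omega$-limit set is nonempty, compact, connected and invariant, and $H\equiv H_\infty$ on it. Hence $v\equiv 0$ on every trajectory in the $\omega$-limit set, and invariance then forces $\partial_uG=0$ there. So the $\omega$-limit set consists of equilibria. There are finitely many equilibria (Lemma~\ref{lem:equilibrium}), and the limit set is connected, so it is a single point $(u_k,0)$.

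The same argument gives the general statement. Any forward bounded solution has a compact forward orbit, so LaSalle applies with $dH/dy=-\alpha v^2$. Finiteness of the equilibria in the region plus connectedness then yields a single limit point.

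\textbf{Where the limit lies.} Since $u^*\le u_k\le u_0$, I must exclude both endpoints. First, $u^*$ is not an equilibrium by \eqref{eq:explicitdG} for $a>a_0$. In the endpoint case $a=a_0$, I would instead use that $G(u_k)\ge\lim G(u(y))\ge \alpha\int v^2>0$. This rules out both $u^*$ and $u_0$, because $G$ vanishes at both.

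\textbf{Main obstacle.} The delicate point is exactly this last exclusion together with the confinement step. It requires that $G>0$ on the whole interval $]u^*,u_0[$, and not merely near $u_0$, which is what the monotonicity of $a^*$ in Lemma~\ref{lem:astar} provides.
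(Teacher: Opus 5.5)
Your proposal is correct and follows the paper's proof essentially step by step. The energy identity \eqref{pos_constraint} forbids reaching $u^*$ or $u_0$, where $G=0$; boundedness gives global existence; the Lyapunov function $H$ gives convergence, where you invoke LaSalle and connectedness of the $\omega$-limit set while the paper unpacks this by hand via $v\in L^2$ with bounded $v_y$ and the fact that the limit points of the scalar $u$ form an interval; and $G(u_k)=\alpha\int v^2>0$ excludes both endpoints. One harmless slip: $C^3_{0,\alpha}$ is tangent to the unstable eigenvector $\overline{r}_0$, not $\underline{r}_0$, and confinement needs only $G(u^*)=G(u_0)=0$, not positivity of $G$ on the whole interval.
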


\begin{proof}
Write $u^*=u^*(a,u_0)$ and suppress the fixed arguments of $G$. The branch initially has $u<u_0$ and $v<0$. At a hypothetical first finite crossing of $u^*$ or $u_0$, the equation~\eqref{pos_constraint} would give
\be
\label{eq:boundary-crossing-energy}
 0\leq \frac{v(y)^2}{2}+\alpha\int_{-\infty}^y v(s)^2\,ds=0,
\ee
contradicting the nonconstant trajectory. Thus $u^*<u(y)<u_0$ throughout its interval of existence. The same identity gives $v^2/2\leq\max_{[u^*,u_0]}G$, so the solution stays in a compact rectangle. The vector field is locally Lipschitz there, and the solution therefore extends for all positive times.

For the more general convergence assertion, we use the energy identity~\eqref{eq:lyapunov-correction}.
On a bounded trajectory $H$ is bounded below, hence $v\in L^2([0,+\infty))$. The equation for $v_y$ gives a uniform bound on $v_y$. If $v$ did not tend to zero, this bound would produce disjoint intervals of a fixed positive length on which $|v|$ is bounded below, contradicting its square integrability. Thus $v(y)\to0$.

Let $u(y_j)\to\bar u$ along a sequence $y_j\to\infty$. Continuous dependence on bounded time intervals shows that the solution starting at $(\bar u,0)$ is a limit of the translated solutions. Its second component vanishes identically, since $v(y)\to0$, so $\partial_uG(\bar u)=0$. The set of limit points of the continuous scalar function $u$ is an interval: if two distinct limits occur, the intermediate value theorem supplies every value between them along times tending to infinity. Since there are finitely many equilibria, this interval is a singleton. This proves convergence.

Finally, on the branch from $u_0$, the integral in \eqref{pos_constraint} has a strictly positive limit. Consequently $G(u_k)>0$, which excludes both $u^*$ and $u_0$. This completes the dissipative argument; the case $\alpha=0$ is described by Lemma~\ref{lem:bubble}.
\end{proof}

Next, given $u_0$ and  the speed $a$ satisfying the prescribed speed assumptions~\eqref{hyp:speed-assumptions}, we know that the largest root of
$G(\cdot\,;u_0,a)$ strictly below $u_0$ occurs at $u=u^*(a,u_0)$, so for the remainder of the analysis we will ignore all roots less than
$u^*(a,u_0)$ and redefine $K$ so that we have $K+ 2$ equilibrium points ordered as
$$
      u^*(a,u_0) < u_{K+1} < u_K < \cdots < u_1 < u_0.
$$ 
Since $G$ vanishes at $u^*$, $\partial_u G\big( u^*(a,u_0) ; u_0, a \big) \geq 0$
so that according to Lemma \ref{propG}, $u_{K+1}$ must be a stable equilibrium point. By renaming $K$, we have
in effect made the convenient decision that the saddle equilibria will be $u_K < u_{K-2} < \cdots < u_2< u_0$. 
The next result is an analogue of Proposition 3.1 from \cite{BedChaCoqLeF}.
 
%
%
%
\begin{lemma}
Suppose that Maxwell's equal area rule holds and the entropy is given by \eqref{entropy:explicit}.
For  $\alpha>0$ and $a_0\leq a<f'(u_0)$, a traveling wave connecting $(u_0,0)$ to a saddle equilibrium $(u_k,0)$ with $u_k<u_0$ must leave $(u_0,0)$ in the quadrant $Q^3_0$.
\end{lemma}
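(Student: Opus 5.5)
The plan is to show that the only alternative, leaving $(u_0,0)$ along $C^1_{0,\alpha}$ into $Q^1_0$, cannot end at a saddle $(u_k,0)$ with $u_k<u_0$. Since $(u_0,0)$ is a saddle (Lemma~\ref{remark1}), its unstable manifold consists of exactly the two branches $C^1_{0,\alpha}$ and $C^3_{0,\alpha}$. These are tangent to $\overline{r}_0=(1,\overline{\mu}_0)^T$ with $\overline{\mu}_0>0$. Any nonconstant trajectory with $u\to u_0$ as $y\to-\infty$ lies on one of them, so it suffices to exclude $C^1_{0,\alpha}$.

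On $C^1_{0,\alpha}$ we initially have $u>u_0$ and $v>0$. To return to some $u_k<u_0$, the solution must cross the line $u=u_0$ at a finite time $y_1$. I will use the energy identity~\eqref{pos_constraint}, which holds on any trajectory emanating from $(u_0,0)$ because it only uses the boundary condition at $-\infty$ and $G(u_0)=0$. At the crossing it gives
\[
\frac{v(y_1)^2}{2}+\alpha\int_{-\infty}^{y_1}v^2\,dy=G(u_0;u_0,a)=0 .
\]
Since $\alpha>0$ and the trajectory is nonconstant, the left side is strictly positive, which is a contradiction. Hence $u(y)>u_0$ for all $y$, and the trajectory cannot converge to $u_k<u_0$.

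One point needs care: the crossing argument requires the solution to exist up to $y_1$, but this is automatic, since by assumption the solution converges and so exists for all $y$. If I want to avoid relying on a crossing, the same identity shows more directly that $u>u_0$ for all $y$. Indeed, $G(u;u_0,a)\ge0$ must hold along the trajectory, and by Lemma~\ref{propG}(iv) $G$ is increasing just to the right of $u_0$ (since $a<f'(u_0)$). Continuity then forbids reaching $u=u_0$ again without the left side of~\eqref{pos_constraint} vanishing.

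There is no real obstacle. The only subtlety is justifying that every orbit leaving $(u_0,0)$ is one of the two unstable branches. This is the standard stable manifold theorem for the hyperbolic saddle $(u_0,0)$, using $\underline{\mu}_0<0<\overline{\mu}_0$, which follows from $f'(u_0)-a>0$. The hypothesis $a_0\le a$ is not needed for this direction; it is used only to ensure the $Q^3_0$ branch behaves as in Lemma~\ref{lem:convergence2saddle}.
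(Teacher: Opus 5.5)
Your proposal is correct and follows the paper's proof: a trajectory leaving in $Q^1_0$ would have to cross $u=u_0$ at a finite time, where \eqref{pos_constraint} forces $v^2/2+\alpha\int_{-\infty}^{y}v^2\,dy=G(u_0)=0$, which is impossible for a nonconstant trajectory. Your added remark that the only orbits leaving the hyperbolic saddle $(u_0,0)$ are the $Q^1_0$ and $Q^3_0$ branches of its unstable manifold makes explicit a step the paper leaves implicit.
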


\begin{proof}
A trajectory leaving in $Q^1_0$ initially has $u>u_0$. To converge to $u_k<u_0$, it must cross $u=u_0$ at a finite time $y_0$. 
The equation~\eqref{pos_constraint} then gives $v(y_0)^2/2+\alpha\int_{-\infty}^{y_0}v^2=G(u_0)=0$, contradicting a nonconstant trajectory.
\end{proof}

So far, we have shown that the left outgoing trajectories, for $\alpha>0$, must eventually approach one of the
remaining $K+1$ equilibrium points strictly between $u^*(a,u_0)$ and $u_0$. Our objective is now to show that 
as a function of $\alpha$ the set of trajectories
form something like a foliation of a bounded region of phase-space, here interpreting foliation roughly 
in the sense of differential geometry. We will demonstrate that the purely dispersive trajectories
define the boundary of this region and use these boundaries to identify
critical diffusion-dispersion ratios $\alpha_n(k)$ for which there exists
travelling waves connecting $u_0$ to $u_k$ after $n$ crossings of the $u$-axis. 
In contrast to the earlier paper \cite{BedChaCoqLeF}, we will see that the
collection of critical diffusion-dispersion ratios cannot be counted in a manner as
simple as when there are only two saddle equilibria.

\begin{figure}[!htb] 
\centering
\begin{minipage}{.5\textwidth}
       \centering  \includegraphics[width=\linewidth]{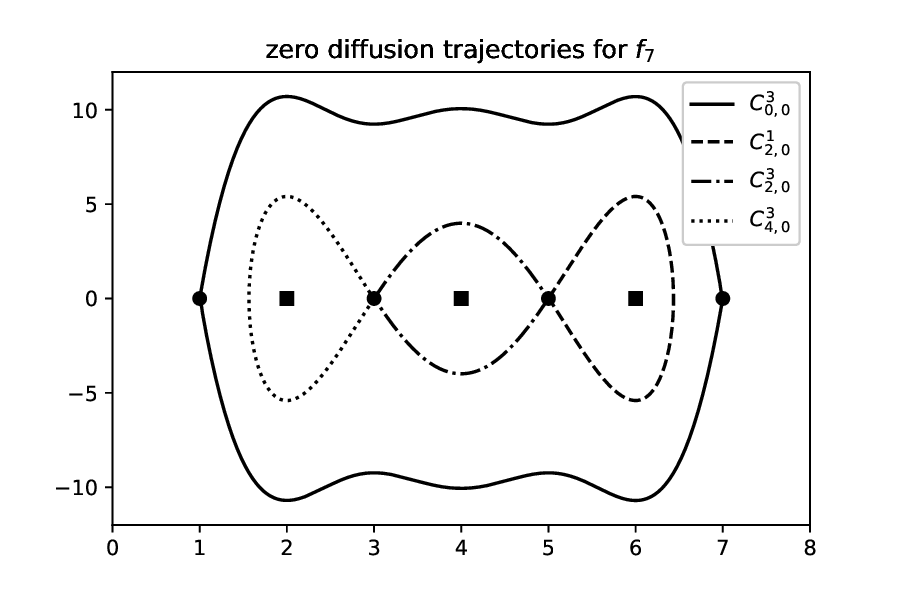}
\end{minipage}%
\begin{minipage}{.5\textwidth}
       \centering  \includegraphics[width=\linewidth]{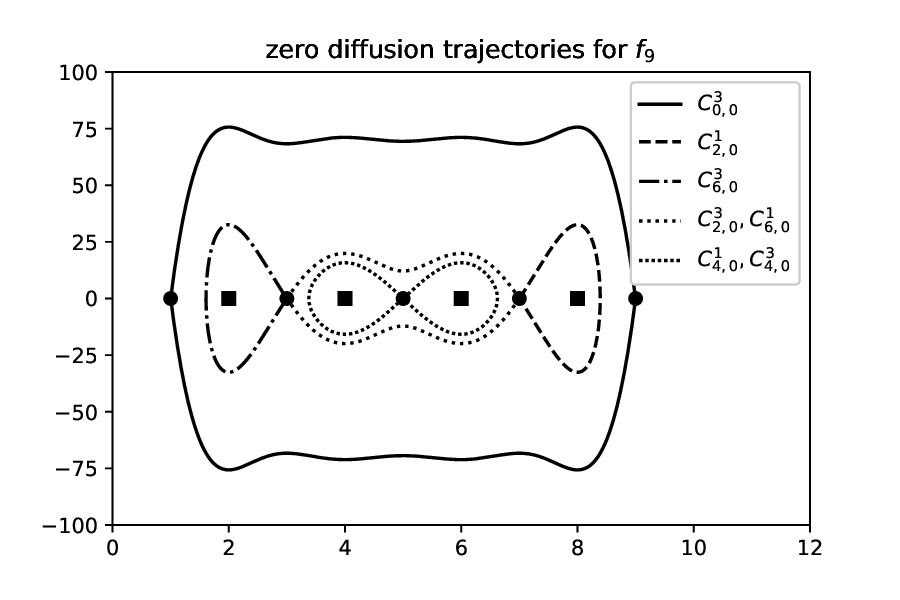}
\end{minipage} 
\caption{Illustration of zero diffusion trajectories for the fluxes $f_k$ when $u_0=k$ and $a=k$. The circular dots are saddle 
equilibrium points and the square dots are centers for zero diffusion (stable equilibria for positive diffusion). }
\label{fig:bubble}
\end{figure}

The polynomial figures at $(u_0,a)=(k,k)$ illustrate Hamiltonian trajectories, without imposing the prescribed speed assumptions~\eqref{hyp:speed-assumptions} of the later theorems. For $f_7$, the largest Maxwell zero is approximately $2.52695$ and gives $a_0\approx11.81574>7$. Thus the plot at $a=7$ is not an instance of that hypothesis. The numerical spline examples below are covered separately by the local fixed-speed assumptions.

\begin{figure}[!htb]
\centering \includegraphics[width=0.6\textwidth]{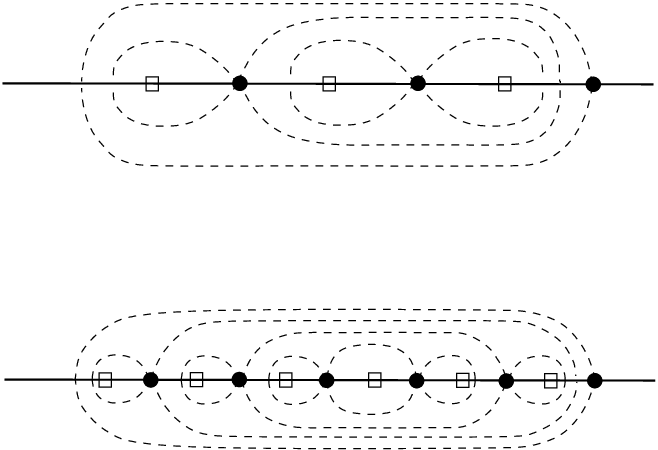}
\caption{Graph of the zero diffusion trajectories in two different situations. Notice
the nestings of the bubbles. The dark circles are saddle equilibria and the empty
squares are stable equilibria.}
\label{fig:configuration1}
\end{figure}

%

The phase-plane results from Lemma~\ref{lem:convergence2saddle} onward also hold for a flux of class $C^2$ on a neighborhood of the compact interval $[u^*,u_0]$, provided the equilibria there are finite and simple, $u_0$ is a saddle, $G(u^*)=G(u_0)=0$, $G>0$ inside, and $G'(u^*)>0$. Indeed, the vector field is $C^2$, the saddle curves and their first parameter variations exist, and all energy and comparison arguments use at most two derivatives of $f$. The $C^2$ assertion for the two-variable function $a^*$ in Lemma~\ref{lem:astar} is not needed for these fixed-speed results. This local formulation applies directly to the cubic splines below, without presuming that an arbitrary smoothing preserves saddle connections.

We begin with an important generalization of Proposition 2.3 from Bedjaoui et al. \cite{BedChaCoqLeF}.

\begin{lemma}[bubble lemma]   \label{lem:bubble}
Assume Maxwell's equal area rule, the entropy \eqref{entropy:explicit}, $\alpha=0$, and the speed $a$ satisfying the 
prescribed speed assumptions~\eqref{hyp:speed-assumptions}. Then
\be
\label{eq:outsidebubble}
 C^3_{0,0}=C^2_{0,0},
\ee
with its unique regular turning point at $u^*(a,u_0)$. If $K\geq2$, the extreme internal saddles satisfy
\be
\label{eq:extreme-bubbles}
 C^3_{K,0}=C^2_{K,0},\qquad C^1_{2,0}=C^4_{2,0}.
\ee
For an internal saddle $u_k$, let $u^-_k<u_k$ and $u^+_k>u_k$ be the first points, to its left and right respectively, at which $G(u)=G(u_k)$. Each exists in $[u^*,u_0]$ and satisfies $G(u)>G(u_k)$ between that point and $u_k$.
\begin{itemize}
\item If $u^-_k$ is not an equilibrium, then $C^3_{k,0}=C^2_{k,0}$ is homoclinic, with turning point $u^-_k<u_{k+1}$. If $u^-_k=u_j$ is an equilibrium, it is a saddle and
\be
\label{eq:first-contact-connections}
 C^3_{k,0}=C^4_{j,0},\qquad C^1_{j,0}=C^2_{k,0}.
\ee
\item If $u^+_k$ is not an equilibrium, then $C^1_{k,0}=C^4_{k,0}$ is homoclinic, with turning point $u^+_k>u_{k-1}$. If $u^+_k=u_j$ is an equilibrium, it is a saddle and the corresponding two reflected connections join $u_k$ to $u_j$.
\end{itemize}
In particular, if $G(u_k)$ differs from every other saddle height, both branches form homoclinic loops.

At $a=a_0$, suppose the equilibria remain simple and write $\varphi=\varphi_0^\flat(u_0)=u^*(a_0,u_0)$. With quadrants centered at $\varphi$, the outer boundary is instead
\be
\label{eq:outsidebubblespecial}
 C^3_{0,0}=C^4_{\varphi,0},\qquad C^1_{\varphi,0}=C^2_{0,0}.
\ee
\end{lemma}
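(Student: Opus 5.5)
The approach is the standard Hamiltonian level-set analysis. For $\alpha=0$ the energy $H(u,v)=v^2/2-G(u)$ of \eqref{eq:lyapunov-correction} is conserved. Every trajectory therefore lies in a level set $\{v^2=2(G(u)+h)\}$. The separatrices of a saddle $u_k$ lie in the level $v^2=2(G(u)-G(u_k))$, that is, on the set where $G(u)\geq G(u_k)$, and they are symmetric under the reflection $v\mapsto -v$ (reversibility $y\mapsto -y$). A branch leaving $u_k$ moves monotonically in $u$, since $v$ has a fixed sign while $G(u)>G(u_k)$. It continues until it reaches the first point where $G(u)=G(u_k)$. Two cases then arise.

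\emph{Regular turning point.} If that point $\bar u$ is regular, i.e. $G'(\bar u)\neq0$, then $v\to0$ in finite time with $v_y=G'(\bar u)\neq0$. The trajectory crosses the $u$-axis transversally and, by reflection symmetry, retraces the mirror image back into $u_k$. This yields the homoclinic identifications $C^3_{k,0}=C^2_{k,0}$ or $C^1_{k,0}=C^4_{k,0}$.

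\emph{Equilibrium contact.} If $\bar u=u_j$ is an equilibrium, the level curve reaches $(u_j,0)$ only asymptotically. Since $G-G(u_k)\geq0$ on the side of $u_j$ facing $u_k$ and vanishes at $u_j$ with $G'(u_j)=0$, we get $G''(u_j)=f'(u_j)-a\geq0$. Simplicity of the equilibria makes this strict, so $u_j$ is a saddle by Lemma~\ref{lem:type-equilibrium}. The branch is then a heteroclinic connection, and its mirror image gives the reverse one, which is \eqref{eq:first-contact-connections}.

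The steps, in order, are as follows.

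First I establish existence and location of $u_k^\pm$. Since $u_k$ is a saddle, $G''(u_k)>0$, so $G>G(u_k)$ near $u_k$ on both sides. I also have $G(u^*)=G(u_0)=0$, while $G(u_k)>0$ for an internal saddle. The intermediate value theorem then gives first points $u_k^-\in(u^*,u_k)$ and $u_k^+\in(u_k,u_0)$. If $u_k^-$ is regular, then $G(u)>G(u_k)$ on $(u_k^-,u_k)$, so $G'(u_k^-)<0$ and $u_k^-$ cannot lie in $[u_{k+1},u_k)$. On that interval $G$ is decreasing in $u$ by Lemma~\ref{propG}(iv), hence $G\geq G(u_k)$ there, with equality only at $u_k$. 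This gives $u_k^-<u_{k+1}$, and symmetrically $u_k^+>u_{k-1}$.

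Second, I handle the outer saddle $u_0$. Here $G(u_0)=0$ and $G>0$ on $(u^*,u_0)$ by the definition of $u^*$. Under \eqref{hyp:speed-assumptions}, \eqref{eq:explicitdG} gives $G'(u^*)>0$, so $u^*$ is a regular turning point. The branch $C^3_{0,0}$ leaves into $v<0$ and reaches $u^*$ in finite time, and the reflection yields \eqref{eq:outsidebubble}.

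Third, for the extreme saddles $u_K$ and $u_2$, I check that the first contact is regular. To the left of $u_K$ the only equilibrium in $(u^*,u_K)$ is the stable point $u_{K+1}$, and the first contact is either $u^*$ or a point beyond $u_{K+1}$, both regular. Symmetrically, to the right of $u_2$ the only equilibria are $u_1$, which is stable, and $u_0$. The point $u_0$ cannot be reached because $G(u_0)=0<G(u_2)$. This gives \eqref{eq:extreme-bubbles}. The remark on distinct heights follows at once, since an equilibrium contact $u_j$ forces $G(u_j)=G(u_k)$.

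Fourth, I treat the special case $a=a_0$. Now $u^*=\varphi$ satisfies $G(\varphi)=0$, and by \eqref{eq:explicitdG} with $E(\varphi,u_0)=0$ also $G'(\varphi)=0$. So $\varphi$ is an equilibrium, and by the contact argument above it is a saddle. The separatrices of $u_0$ are then asymptotic to it, which gives \eqref{eq:outsidebubblespecial}.

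The main obstacle is the contact case: justifying that the level branch truly converges to $(u_j,0)$ and coincides with the stable/unstable manifold in the named quadrant. I would handle it by noting that near a simple saddle the zero-energy-relative level set is exactly the union of the four separatrices, via the Morse lemma applied to $H-H(u_j,0)$. The quadrant labels are then read off from the signs of $v$ and of $u-u_j$.
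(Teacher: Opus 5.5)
Your proposal is correct and follows essentially the paper's proof. The shared steps are conservation of $H$, the first level contact on each side of $u_k$, finite-time regular turns closed up by the reversibility $(u(y),v(y))\mapsto(u(-y),-v(-y))$, a one-sided second-order argument forcing an equilibrium contact to be a simple saddle, $G'(u^*)>0$ for the outer loop, and the saddle $\varphi$ at $a=a_0$. The one difference is that you obtain the asymptotic approach from the local Morse structure of the level set, where the paper uses the divergent integral $\int du/\sqrt{2(G(u)-G(u_j))}$; both are fine.

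Two small slips need correcting. First, at a regular left contact one has $G'(u_k^-)>0$, not $<0$, since $G>G(u_k)$ just to its right; your location argument via the monotonicity of $G$ on $(u_{k+1},u_k)$ is unaffected. Second, at $a=a_0$ you should read off $G'(\varphi)=0$ directly from $\partial_uG=(u-u_0)\big(a(u,u_0)-a\big)$, because \eqref{eq:explicitdG} was derived only for $a>a_0$.
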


\begin{proof}
Along a branch issuing from a saddle $u_k$, conservation of $H$ defined in \eqref{eq:lyapunov-correction} gives
\be
\label{eq:bubble.eq2}
 \frac{v^2}{2}=G(u)-G(u_k).
\ee
Since $G''(u_k)>0$, this difference is positive on both sides near $u_k$. For $k>0$, $G(u_k)>0$ while $G(u^*)=G(u_0)=0$, so a first level contact exists on each side. A regular contact is reached in finite time because the integral of $1/\sqrt{2(G(u)-G(u_k))}$ is finite at a simple zero. The symmetry $(u(y),v(y))\mapsto(u(-y),-v(-y))$ then gives the homoclinic loop. An equilibrium contact cannot be a local maximum of $G$, since the level difference is positive on the approaching side. It is therefore a saddle. There $G(u)-G(u_j)=G''(u_j)(u-u_j)^2/2+o((u-u_j)^2)$, so the time integral diverges and the branch approaches the saddle asymptotically. Reflection supplies the second connection.

The adjacent stable equilibria $u_{k+1}$ and $u_{k-1}$ are strict maxima of $G$, giving the stated inequalities for regular turning points. At the extreme internal saddles there is no intervening saddle before the outer boundary, which proves the loops at $u_K$ and $u_2$.

For $k=0$ the same identity reads $v^2/2=G(u)$, and $G>0$ on $(u^*,u_0)$. Under the prescribed speed assumptions~\eqref{hyp:speed-assumptions}, $G'(u^*)>0$, so the first contact is regular and reflection proves \eqref{eq:outsidebubble}. At $a=a_0$ the first contact is the simple saddle $\varphi$, giving \eqref{eq:outsidebubblespecial} by the preceding asymptotic argument.
\end{proof}

Under the prescribed speed assumptions~\eqref{hyp:speed-assumptions}, the outer loop bounds the region used below. Internal saddle connections additionally require $G(u)>G(u_k)$ at every intervening state; equal endpoint heights alone do not imply a connection.

The zero diffusion trajectories, which we refer to as {\it bubbles}, are all solutions to the 
same system \eqref{Agsystemb}, hence they cannot cross, and
must subdivide the region in $C^3_{0,0}$ into a collection of nested
sets containing the equilibria.  
It is easy to verify directly, but will follow 
from the more general Lemma \ref{lem:monotonicity}, that the regions bounded by 
the bubbles are absorbing regions for the differential equation. 
The relative positions and nestings of the bubbles will determine the paths
taken by solutions to the differential equations  \eqref{Agsystemb} as they approach an equilibrium point.
We will eventually see that these bubbles are organized into {\it bowties},
which we define below, and that the overall structure of the bubbles is
characterized by the relative nesting of these bowties.

\begin{definition}
We say that two saddle equilibrium $u_j < u_k$ are connected if
$$
      C^3_{k,0} = C^4_{j,0} \quad \text{and} \quad C^1_{j,0} = C^2_{k,0}.
$$
A bowtie is a sequence of $m$ connected saddle equilibria
defined by an increasing function $k:\{1, \ldots,m\} \rightarrow 2 \mathbb{Z}$ from which we 
determine the sequence
$$
    u_{k(m)} < u_{k(m-1)} < \cdots  < u_{k(1)}.
$$
We say that a bowtie is elementary if the interior each of its bubbles contains a 
single stable equilibrium point. 
\end{definition}

In Figure \ref{fig:bowtie}, we illustrate three different bowties to give the reader some intuition.

\begin{figure}[!htb]
\psfrag{R}{$R$}
\psfrag{D}{$D$}
\psfrag{L}{$L$}
\psfrag{RL}{$RL$}
\psfrag{RRL}{$R\underline{RL}$}
\psfrag{RLL}{$\underline{RL}L$}
\psfrag{RDL}{$RDL$}
\psfrag{RRDL}{$R\underline{RDL}$}
\psfrag{RRDRL}{$RRD\underline{RL}$}
\psfrag{RRDDL}{$R\underline{RDDL}$}
\psfrag{RRDLL}{$R\underline{RDL}L$}
\psfrag{RRDRLL}{$RRD\underline{RL}L$}
\centering \includegraphics[width=1.0\textwidth]{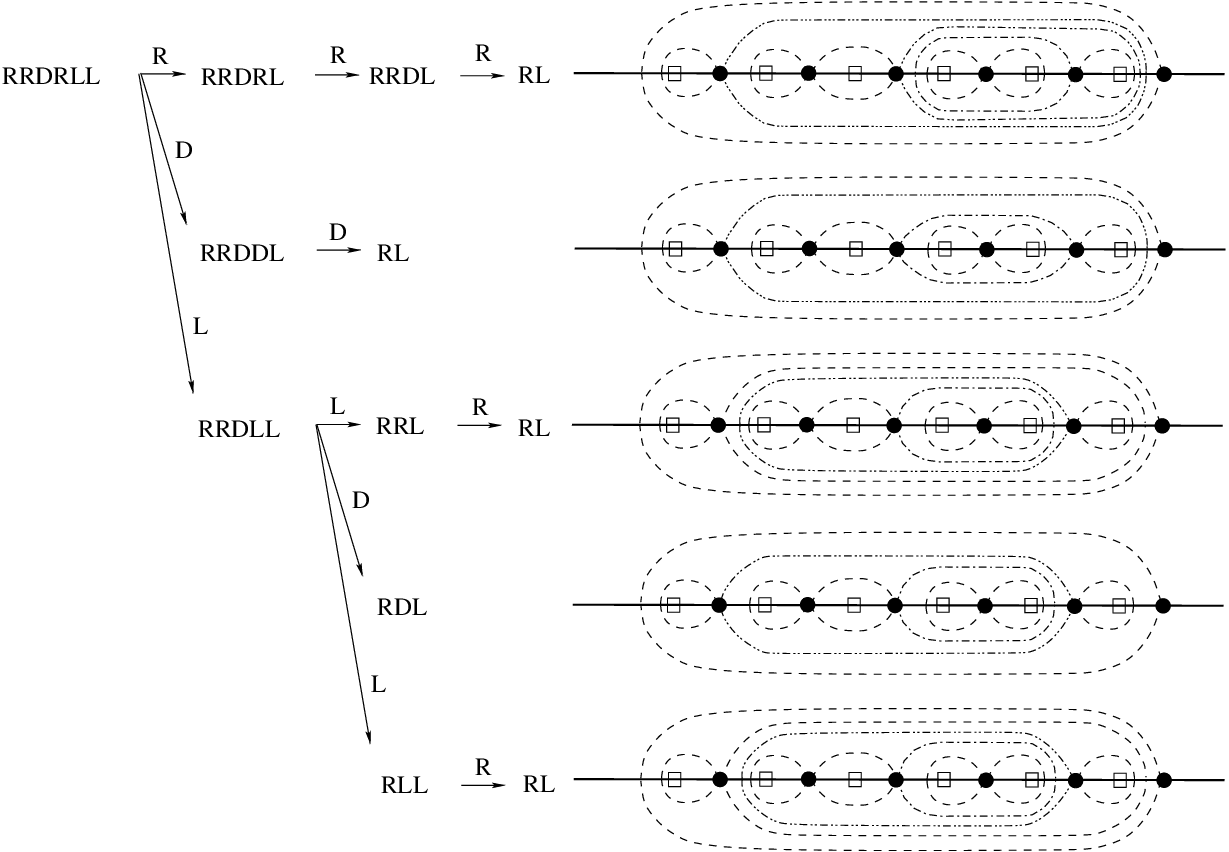}
\caption{This figure illustrates how the reduction rule is applied to an initial configuration $RRDRLL$
to generate all possible nestings of bubbles. Every time a bowtie is identified, it is underlined
and a decision is made to replace it by either a bubble $R, D$ or $L$. The decision is indicated
above each arrow. We obtain $5$ possible configurations. The first bubble that is decided is indicated as a dash-dot line, 
the second decision was described as a dash-dot-dot line, and the third was a dash-dot-dot-dot line.}
\label{fig:bowtie}
\end{figure}

In the nesting comparisons below, $C^i_{j,0}\subset C^\ell_{k,0}$ compares the bounded regions 
delimited by the respective separatrices and their reflections across the $u$-axis.

The symbolic description below concerns $K\geq2$; for $K=0$ the outer region contains only its stable equilibrium.

The next step is to characterize all possible nestings of bubbles since
ultimately these options will determine the possible paths taken by travelling waves. For each interval
$]u_{k+2},u_k[$ bounded by two saddle equilibria, the trajectories
$$
         C^1_{k+2,0} \quad \text{and} \quad C^3_{k,0} 
$$
cannot cross but $C^1_{k+2,0}$ is to the right of $u_{k+2}$
and has its first level contact at $u^+_{k+2}$ to the right of $u_{k+1}$ while $C^3_{k,0}$
is to the left of $u_k$ and has its first level contact at $u^-_k$ to the left of $u_{k+1}$, and so only one
these three options can occur.
\begin{itemize}
\item[1)] $C^1_{k+2,0} \subset C^3_{k,0}$
\item[2)] $C^3_{k,0} \subset C^1_{k+2,0}$
\item[3)] $C^1_{k+2,0} = C^2_{k,0}$ and $C^3_{k,0}  = C^4_{k+2,0}$
\end{itemize} 
The interval $]u_{k+2},u_k[$ will be said to be either left-constrained, right-constrained, or
dual-constrained for the respective configurations described above. We briefly remark
that it should be clear that dual-constrained intervals are simply limits of the first two
cases as the bubbles approach each other.
Given any generic flux with $K+1$ equilibrium points strictly between $u_0$ and $u^*$, the
largest root of $G$ smaller than $u_0$, then each interval
$$
       ]u_K, u_{K-2}[, \ldots,]u_4, u_2 [ 
$$
is said to be either right, left, or dual constrained, but the interval $]u_2,u_0[$
must always be left-constrained while the interval $]u^*,u_K[$ must always be right-constrained
since all bubbles are contained inside the larger $C^3_{0,0}$.
Let us summarize this information as a sequence of symbols
\begin{equation}   \label{eq:initialconstraints}
           R P_{K-1} P_{K-3} \cdots P_3 L
\end{equation}
where $P_j \in \{ R,D,L \}$ indicates if the interval $]u_{j+1},u_{j-1}[$ between two saddle
equilibria is right, left, or dual constrained. From this information, we will introduce a
series of choices that completely determine the nesting of the bubbles.
If we have a subset of the sequence \eqref{eq:initialconstraints} formed by the symbols $R$ and $L$ 
bounding a continuous sequence of $D$'s, like
$$
      RL, RDL, RDDL, RDDDL,
$$
then that sequence represents a bowtie. If $R$ belongs to the interval $]u_{j+2},u_j[$ 
and $L$ to the interval $]u_{k+2},u_k[$,
then $C^1_{j+2,0}$ must surround the bowties and so does $C^3_{k,0}$, unless
$$
     C^1_{j+2,0} = C^2_{k,0} \quad \text{and} \quad C^3_{k,0} = C^4_{j+2,0}.
$$  
So, we can replace the bowtie sequence by either a single letter $L, R$ or $D$,
indicating whether we have
$$
    C^1_{j+2,0} \subset C^3_{k,0}, \quad  C^3_{k,0} \subset C^1_{j+2,0}, 
$$
or
$$
      C^1_{j+2,0} = C^2_{k,0} \quad \text{and} \quad C^3_{k,0} = C^4_{j+2,0}.
$$
We can say that the interval $]u_{J+1}, u_{K+1}[ $ containing the bowtie is either left-constrained,
right-constrained, or dual-constrained.
This rule can be applied repeatedly every time a bowtie is identified in the sequence,
and bowties must always be present because, even as this rule is applied repeatedly, the sequence always begins with an $R$
on the left and an $L$ on the right. Applying these operations produces a sequence
of choices that entirely describe the nesting of the bubbles and eventually reduce the sequence to 
a single bowtie representing the largest bowtie in $C^3_{0,0}$. To illustrate this reduction process, 
we refer the reader to Figure \ref{fig:bowtie}
where this process is applied to the initial sequence $RRDRLL$. We observe in that example
that there are five different possible nestings of 
the bubbles that are consistent with this initial description \eqref{eq:initialconstraints} of the geometry of each interval.

At this point, it is useful to take a moment to introduce a rewriting of the system of differential equations \eqref{Agsystemb}. 
Assuming that we have a solution $y \mapsto (u(y),v(y))^T$ to the system and that for 
some $y^*$, $v(y^*) = u_y(y^*) \neq 0$, then $u(y)$ can be inverted and we can write $v(u) := v \big( y(u)\big)$ with 
\begin{align*} 
         \frac{d v}{dy}\big( u(y) \big) = & \, \frac{d v}{d u}\big( u(y) \big) \cdot \frac{d u}{d y}\big( y \big) = \frac{d v}{d u}(u) \cdot v(u).
\end{align*}
The system \eqref{Agsystemb} becomes 
\begin{equation}     \label{eq:systemwrtu}
           \frac{d v}{d u}(u) \cdot v(u) = - \alpha v(u) + \partial_u G(u;u_0,a),
\end{equation}
at least whenever $v(u) \neq 0$. Usually, the left-hand side will be rewritten as
\begin{equation}     \label{eq:systemwrtu2}
             \frac{d}{d u}\bigg(  \frac{v(u)^2}{2} \bigg) = - \alpha v(u) + \partial_u G(u;u_0,a),
\end{equation}
which, upon integration in $u$, leads to identities equivalent to \eqref{pos_constraint}
if we made the change of variable $du=v dy$. This equation
will be used in the proof of the next result, but also a few times later in the text.

\begin{figure}[!htb]
  \psfrag{a1la2}{$0 \leq  \alpha_1 < \alpha_2$}
  \psfrag{uk}{$(u_k, 0)$}
  \psfrag{u}{$u$}
  \psfrag{v}{$v$}
  \psfrag{ukp2}{$u_{k+2}$}
  \psfrag{ukp1}{$u_{k+1}$}
  \psfrag{ukp0}{$u_k$}
  \psfrag{v1}{$v_1$}
  \psfrag{v2}{$v_2$}
  \psfrag{C1a1}{$C^1_{k,\alpha_1}$}
  \psfrag{C1a2}{$C^1_{k,\alpha_2}$}
  \psfrag{C2a1}{$C^2_{k,\alpha_1}$}
  \psfrag{C2a2}{$C^2_{k,\alpha_2}$}
  \psfrag{C3a1}{$C^3_{k,\alpha_1}$}
  \psfrag{C3a2}{$C^3_{k,\alpha_2}$}
  \psfrag{C4a1}{$C^4_{k,\alpha_1}$}
  \psfrag{C4a2}{$C^4_{k,\alpha_2}$}
\centering
\begin{minipage}{.5\textwidth}
       \centering  \includegraphics[width=\linewidth]{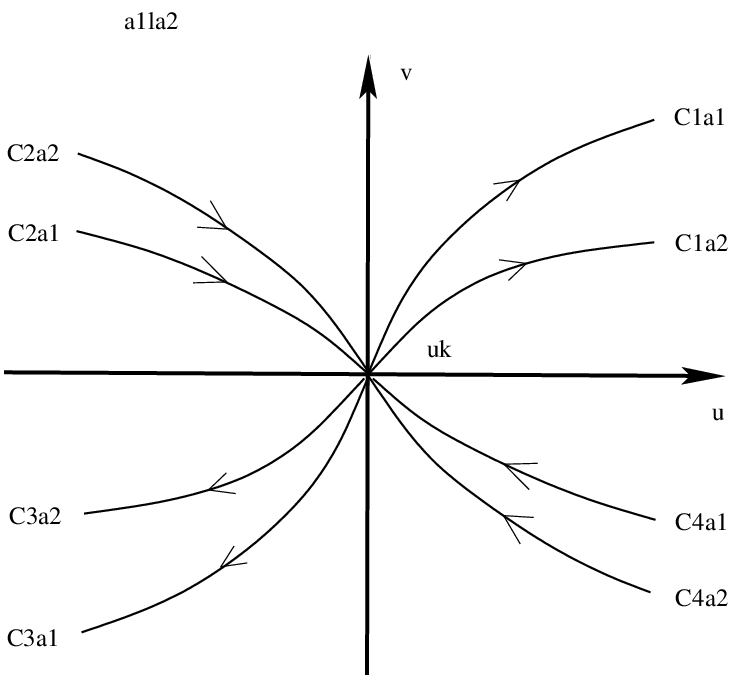}
\end{minipage}%
\begin{minipage}{.5\textwidth}
       \centering  \includegraphics[width=\linewidth]{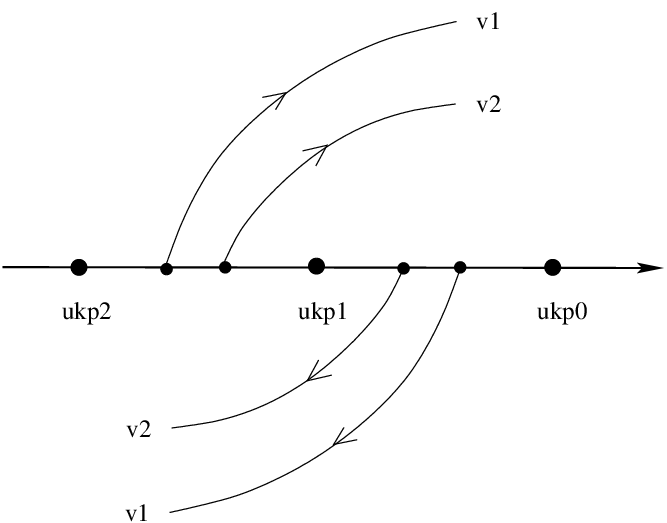}
\end{minipage} 
\caption{The illustration on the left concerns the cases (i) and (ii) of the monotonicity lemma, Lemma \ref{lem:monotonicity},
while the one on the right describes case (iii).}
\label{fig:monotone}
\end{figure}

%
\begin{lemma}[monotonicity lemma]    \label{lem:monotonicity}
Assume that Maxwell's equal area rule holds, that the entropy is given by \eqref{entropy:explicit}, and that
the speed $a$ satisfies the prescribed speed assumptions~\eqref{hyp:speed-assumptions}. For a pair of trajectories corresponding to
diffusion-dispersion ratios $0 \leq \alpha_1 < \alpha_2$ and leaving a saddle equilibrium $u_k$,
we have the following properties.
\begin{itemize}
\item[(i)] For $i=1,3$, as long as $C^i_{k,\alpha_1}$ and $C^i_{k,\alpha_2}$ are above (respectively below) the $u$-axis, then
              $C^i_{k,\alpha_1}$ remains above (respectively below) $C^i_{k,\alpha_2}$.
\item[(ii)]  For $i=2,4$, as long as $C^i_{k,\alpha_1}$ and $C^i_{k,\alpha_2}$ are above (respectively below) the $u$-axis, then
              $C^i_{k,\alpha_2}$ remains above (respectively below) $C^i_{k,\alpha_1}$.               
\end{itemize} 
More generally, consider two solutions $U_i(y) = \big( u_i(y), v_i(y) \big)^T$, $i=1,2,$
associated to $\alpha_i$ satisfying $\alpha_1 < \alpha_2$ and which cross the $u$-axis
at $y_i$, that is $v_i(y_i) = 0$, $i=1,2$.
\begin{itemize}
\item[(iii)] If $u_k < u_1(y_1) < u_2(y_2) < u_{k-1}$ ($u_{k+1} < u_2(y_2) < u_1(y_1) < u_k$) for $k$ even, then 
                as long as both solutions remain above (below) the 
                $u$-axis, the graph of $U_2$ is bounded above (below) by the graph of $U_1$.
\end{itemize}
In (iii), the same comparison holds whenever the turning points lie in a common interval where $G'$ has the indicated strict sign. In particular, it applies on the left outer interval $(u^*,u_{K+1})$, where $G'>0$.
\end{lemma}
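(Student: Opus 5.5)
The natural approach is to work in the $u$-parametrization \eqref{eq:systemwrtu2}. On any arc of a trajectory where $v\neq0$, the trajectory is a graph $v=v(u)$. Setting $w=v^2/2$, this gives the scalar first-order equation
\[
\frac{dw}{du}=-\alpha\,v(u)+\partial_u G(u;u_0,a),\qquad v=\pm\sqrt{2w}.
\]
The sign of $v$ is fixed on each arc. So the comparison between $\alpha_1$ and $\alpha_2$ becomes a comparison of solutions of scalar ODEs in $u$ with the same right-hand side up to the term $-\alpha v$. The idea is to show that at a hypothetical first point where the two graphs meet, the derivatives $dw/du$ are ordered the wrong way, which gives a contradiction.

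\textbf{Cases (i)--(ii).} Take for instance $C^1_{k,\alpha}$, which leaves $u_k$ with $u>u_k$ and $v>0$. Near $u_k$ the branch is tangent to $\overline r_k$ with slope $\overline\mu_k$. By Lemma~\ref{valpropres} (since $f'(u_k)-a>0$), $\partial_\alpha\overline\mu_k<0$, so $\overline\mu_k(\alpha_1)>\overline\mu_k(\alpha_2)$. The unstable manifold is $C^1$ in $u$ near $u_k$, hence $v_1(u)>v_2(u)$ for $u$ slightly larger than $u_k$. Now suppose $u_c$ is the first later point where $v_1(u_c)=v_2(u_c)=v>0$. From \eqref{eq:systemwrtu},
\[
\frac{dv_1}{du}-\frac{dv_2}{du}=-(\alpha_1-\alpha_2)>0 \quad\text{at } u_c .
\]
This contradicts the fact that $v_1-v_2$ decreases to $0$ at $u_c$. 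The branch $C^3$ is handled the same way, with $u<u_k$ and $v<0$: now $dv/du=-\alpha+G'/v$, and one checks the orientation reversal. For the stable branches $C^2$, $C^4$, the eigenvalue $\underline\mu_k$ satisfies $\partial_\alpha\underline\mu_k<0$. Because these branches are traversed backwards, this reverses the order, which gives (ii). The crossing argument is identical.

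\textbf{Case (iii).} Here the trajectories start at turning points $v_i(y_i)=0$, where $u\mapsto v(u)$ has an infinite slope. I would therefore use $w_i=v_i^2/2$. Then
\[
w_i(u)=\int_{u_i(y_i)}^{u}\bigl(G'(s)-\alpha_i v_i(s)\bigr)\,ds .
\]
Suppose $G'>0$ on the common interval, with $v>0$ and $u$ increasing past both turning points. For $u$ between $u_1(y_1)$ and $u_2(y_2)$, we have $w_1>0=w_2$, so $U_1$ is above $U_2$ there. Beyond $u_2(y_2)$, at a first contact $w_1=w_2$, i.e.\ $v_1=v_2>0$, we get
\[
\frac{d}{du}(w_1-w_2)=(\alpha_2-\alpha_1)v>0 ,
\]
which is again a contradiction. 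Hence $w_1>w_2$ as long as both solutions stay above the axis. The mirrored configuration below the axis follows from the same computation with the signs reversed. The general remark only uses the strict sign of $G'$ at the turning points, so it covers the outer interval $(u^*,u_{K+1})$.

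\textbf{Main obstacle.} The delicate step is the start of the comparison, since the crossing argument is routine. In (i)--(ii) the curves initially coincide at the saddle, so one needs the strict eigenvalue inequality together with $C^1$ dependence of the invariant manifold graphs to get strict order near $u_k$. For $\alpha_1=0$ one must also check that $\overline\mu_k$ is still strictly monotone, which holds. In (iii), a tangency with $dw/du=0$ could occur at the turning point. This is excluded because $G'\neq0$ there: $dw/du=G'$ at $v=0$, so near that point $w$ grows like $G'(u_i)(u-u_i)$.
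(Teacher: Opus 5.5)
Your proposal is correct and follows essentially the same route as the paper. The paper also gets the initial ordering from the sign of $\partial_\alpha\overline{\mu}_k$, $\partial_\alpha\underline{\mu}_k$ in Lemma~\ref{valpropres}, excludes a first crossing via $\frac{d}{du}(v_1-v_2)=\alpha_2-\alpha_1>0$ in \eqref{eq:systemwrtu}, and for (iii) uses the ordering of the turning points followed by the same crossing argument. Writing (iii) with $w=v^2/2$, rather than starting the comparison at a point $\bar u$ just to the right of $u_2(y_2)$, is only a cosmetic variant.
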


\begin{proof}
We begin by studying the first two cases which are distinct from the last one since the trajectories
begin at a saddle equilibrium. In the first case when $i=1$, the solutions beginning at $u_k$
asymptotically leave in the direction of the eigenvector
$$
          \overline{r}_k = (1, \, \overline{\mu}_k)^T
$$
and we verified in Lemma \ref{valpropres} that 
$$
    \frac{\partial \overline{\mu}_k}{\partial \alpha} < 0.
$$
Hence, when $i=1$ there is a small interval $]u_k,u_k+\epsilon[$ over which $C^1_{k,\alpha_1}$ is
above $C^1_{k,\alpha_2}$ and in particular there is a $y_1, y_2$ for which 
$$
    u_1(y_1)=u_2(y_2) \quad \text{but} \quad v_1(y_1)> v_2(y_2) > 0.
$$
We now observe that since $v_j(y_j) = d u_j/dy(y_j) \neq 0$ for $j=1,2$, then
$u_j$ is invertible with respect to $y$ and the trajectories $v_j(y)$ are solutions to \eqref{eq:systemwrtu}
\begin{align}
           \frac{d v_1}{du}(u) \cdot v_1(u) = & \, - \alpha_1 v_1(u) + \partial_u G(u;u_0,a),  \label{eq:mono1} \\
           \frac{d v_2}{du}(u) \cdot v_2(u) = & \, - \alpha_2 v_2(u) + \partial_u G(u;u_0,a).  \label{eq:mono2}
\end{align}
To proceed by contradiction, we assume that there exists $u^*>u_k+\epsilon$ at which
$$
     v_1(u^*) = v_2(u^*) > 0.
$$
After subtracting \eqref{eq:mono2} from \eqref{eq:mono1}, we find
$$
     v_1(u^*) \bigg(   \frac{d v_1}{du}(u^*) -   \frac{d v_2}{du}(u^*)   \bigg) = - v_1(u^*) \big( \alpha_1 - \alpha_2 \big).
$$
Since $\alpha_1 < \alpha_2$ and $v_1(u^*) > 0$, we conclude that
\begin{equation}   \label{ineq:mono}
          \frac{d v_1}{du}(u^*) >  \frac{d v_2}{du}(u^*),
\end{equation}
which contradicts our assumption that $C^1_{k,\alpha_2}$ crossed $C^1_{k,\alpha_1}$ for the first time at $u=u^*$.

In the case (i) when $i=3$, then the analysis of the eigenvectors in Lemma \ref{valpropres} again implies that
there exists a small interval $[u_k-\epsilon,u_k[$ over which $C^3_{k,\alpha_1}$
is below $C^3_{k,\alpha_2}$, both below the $u$-axis. As before, as long the curves remain below the
$u$-axis then $v_j$ satisfies a differential equation with respect to $u$, namely \eqref{eq:mono1}
and \eqref{eq:mono2}.  If the curves $C^3_{k,\alpha_1}$ and $C^3_{k,\alpha_2}$ cross while still below the $u$-axis, 
then there is a $u^* < u_k - \epsilon $ at which
$$
     v_1(u^*) = v_2(u^*) < 0.
$$ 
The same argument yields the inequality \eqref{ineq:mono}.
At the crossing $(u^*,v_1(u^*)) = (u^*,v_2(u^*))$ since $v_1(u) < v_2(u)$ to the right of $u^*$,
then we would need to have
$$
         \frac{d v_1}{du}(u^*) \leq \frac{d v_2}{du}(u^*),
$$
which would contradict \eqref{ineq:mono}. Therefore the curves cannot cross
while still below the $u$-axis.

We now briefly consider the case (ii) and focus on $i=2$. In this case, the eigenvectors are
$
     \underline{r}_k = (1, \, \underline{\mu}_k)^T
$
and we also have
\be
     \frac{\partial \underline{\mu}_k}{\partial \alpha} < 0,
\ee
yet now $\underline{\mu}_k<0$ at the saddle equilibrium. Therefore, there is a small interval $[u_k- \epsilon,u_k[$
over which $C^2_{k,\alpha_1}$ is below $C^2_{k,\alpha_2}$. The same argument as before states that if they cross,
then geometrically \eqref{ineq:mono} holds. Yet, if they cross then geometrically they must satisfy
$$
            \frac{d v_1}{du}(u^*) \leq \frac{d v_2}{du}(u^*) ,
$$
must hold thereby contradicting \eqref{ineq:mono}. This proves the case (ii) when $i=2$ while if
$i=4$, we still have that $C^4_{k,\alpha_2}$ is below $C^4_{k,\alpha_1}$ over some small interval $]u_k,u_k+\epsilon]$,
both below the $u$-axis. If they cross at $u^*> u_k+\epsilon$, then \eqref{ineq:mono} again holds but the
geometry implies that
$$
               \frac{d v_1}{du}(u^*) \leq \frac{d v_2}{du}(u^*).
$$
We again conclude that the two trajectories cannot cross as long as they are both below the $u$-axis.

The proof of the case (iii) is almost the same as the one presented earlier.  We assume that
$u_k < u_1(y_1)< u_2(y_2) < u_{k-1}$ where $k$ is even and $u_k$ is a saddle equilibrium. Lemma
\eqref{propG} states that $\partial_u G(u;u_0,a)>0$ for all $u \in] u_k, u_{k-1}[$ so that
$$
                 \frac{d v_i}{d y }(y_i) = \partial_u G \big( u_i(y_i); u_0,a \big) 
$$
and $u_i$ decreases to $u_i(y)$ for $y< y_i$ before increasing for $y>y_i$.
The solution $u_i$ will continue to increase until it reaches $u_1(y_1^*) = u_2(y_2)$,
at which point $v_1(y_1^*) > v_2(y_2)=0$,  and in particular for $\bar{u}$ slightly
larger than $u_1(y_1^*) = u_2(y_2)$, we have
$$
         0< v_2(\bar{u}) < v_1(\bar{u}).
$$         
If the trajectories cross at $u^*> \bar{u}$, then the inequality \eqref{ineq:mono}
holds but the geometry implies that the 
crossing must satisfy
$$
               \frac{d v_1}{du}(u^*) \leq \frac{d v_2}{du}(u^*).
$$
So a crossing is not possible to the right of $\overline{u}$. 
A similar argument can be used when the trajectories cross the $u$-axis between $]u_{k+1},u_k[$
for $k$ even.
\end{proof}

Using the monotonicity lemma, we can conclude that the regions bounded
by the zero diffusion trajectories, the so-called bubbles, define absorbing regions.

\begin{corollary}
Consider a solution $U(y)$ to the system \eqref{Agsystemb} with $\alpha>0$ that crosses at $y=y_0$ a zero diffusion trajectory bounding a bubble $A$. Then for all $y>y_0$ the solution remains in $A$ and converges to an equilibrium in its interior.
\end{corollary}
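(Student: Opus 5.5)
The plan is to combine the energy identity \eqref{eq:lyapunov-correction} with the crossing structure of the zero-diffusion loops. A bubble $A$ is the closed region bounded by a zero-diffusion homoclinic loop or bowtie boundary. On its boundary $H=h_A$ is constant, namely $h_A=-G(u_k)$ for the saddle(s) $u_k$ on it, or $0$ for the outer loop $C^3_{0,0}$. In the interior, $H<h_A$: along each vertical segment $v^2/2$ is smaller inside, and by the bubble lemma $G(u)>G(u_k)$ between the turning points. The region $A$ is therefore a connected component of the sublevel set $\{H\le h_A\}$ containing the interior equilibria.

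The key steps, in order, are as follows.

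First, for $\alpha>0$ we have $dH/dy=-\alpha v^2\le0$, and $v$ cannot vanish on a nontrivial interval unless the solution is an equilibrium. Hence $H$ is strictly decreasing along nonconstant solutions, apart from isolated instants where $v=0$.

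Second, at the crossing time $y_0$ we have $H(U(y_0))=h_A$. Immediately after $y_0$, $H<h_A$. The boundary crossing cannot happen at a saddle on $\partial A$, since saddles are equilibria and a solution reaches them only asymptotically. So the crossing point is a regular boundary point, and the solution enters $A$.

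Third, I would rule out leaving. A later exit would require returning to $\partial A$, where $H=h_A$, contradicting strict decrease. Passing through a saddle vertex of the bowtie into an adjacent bubble is impossible for the same reason. This is consistent with the monotonicity lemma, Lemma~\ref{lem:monotonicity}, which shows that trajectories with $\alpha>0$ lie strictly inside the $\alpha=0$ separatrices on each half-plane.

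Fourth, the solution stays in the compact set $A$, which contains finitely many equilibria. The general convergence assertion of Lemma~\ref{lem:convergence2saddle} then gives convergence to some equilibrium $\bar u$. Since $H(\bar u,0)<h_A$, the limit $\bar u$ is not a saddle on $\partial A$, so it lies in the interior.

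The main obstacle is the topological identification in the first part: showing that the interior of $A$ is exactly a component where $H<h_A$. This needs care for bowtie and nested configurations, where $\partial A$ contains several saddles at equal height and inner bubbles sit inside $A$. I would handle it using the description $v^2/2=G(u)-G(u_k)$ from the bubble lemma, together with the fact that the zero-diffusion curves do not cross. If this description proves delicate, the fallback is to argue only the non-exit statement by the monotonicity lemma applied to each boundary arc, comparing with $\alpha_1=0$.
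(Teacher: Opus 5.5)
Your proposal is correct and follows essentially the same route as the paper: constant energy $H=h$ on the boundary with $H<h$ inside (the arcs being $v=\pm\sqrt{2(G(u)+h)}$), strict decrease of $H$ making the crossing inward and forbidding a return to the boundary or a finite-time arrival at a boundary saddle, then the convergence argument of Lemma~\ref{lem:convergence2saddle} with limiting energy strictly below $h$. One imprecision: $A$ is not a component of the closed set $\{H\le h_A\}$, since that set joins adjacent lobes at a shared saddle (and extends to the right of $u_0$ for the outer loop); the correct object is the open set $\{H<h_A\}$, which your Step~3 already uses, and the monotonicity-lemma fallback is unnecessary.
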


\begin{proof}
The boundary has a constant energy $H=h$, and the interior has $H<h$. Indeed, its upper and lower arcs are $v=\pm\sqrt{2(G(u)+h)}$. At a non-equilibrium point of an arc with $v\neq0$, the equation~\eqref{eq:lyapunov-correction} gives $H_y<0$, so the crossing is inward. At a regular turning point, $v_y=G'(u)\neq0$ and $H$ also decreases strictly immediately afterwards. An equilibrium cannot be reached in finite time by a nonconstant solution. After the crossing, $H<h$ prevents a later exit. The trajectory is bounded, and the convergence argument in Lemma~\ref{lem:convergence2saddle} applies. Its limiting energy is strictly less than $h$, excluding the boundary.
\end{proof}

The solutions move to the left when $v<0$ and to the right when $v>0$. The comparisons above apply on their common monotone arcs, as specified in the lemma; they do not assert a global ordering of arbitrary trajectories through all subsequent turns.

The next result will be based on the observation that if a solution $U(y)$
to equation \eqref{Agsystemb} crosses the $u$-axis at $\big( u(y_m),0 \big)^T$ that is not
an equilibrium point, then the convexity of the trajectory is given by
$$
        \frac{d^2 u}{d y^2} (y_m) = \partial_u G\big( u(y_m); u_0,a \big),
$$
and is therefore independent of $\alpha$. Using this fact, the monotonicity expressed in the inequalities of the monotonicity lemma
will be extended all the way to $v=0$, thereby demonstrating the strict monotonicity even at the intersection with
the $u$-axis. To help the reader remember the key geometrical feature of the result, it will
be referred to as the {\it convexity lemma}.

%

\begin{lemma}[convexity lemma]    \label{lem:convexity}
Assume that Maxwell's equal area rule holds, that the entropy is given by \eqref{entropy:explicit}, and
that the speed $a$ satisfies the prescribed speed assumptions~\eqref{hyp:speed-assumptions}. For a pair of diffusion-dispersion ratios
$$
     0 \leq \alpha_1 < \alpha_2,
$$
consider two solutions $v_1, v_2$ of the equation \eqref{eq:systemwrtu}
associated respectively to $\alpha_1, \alpha_2$. 
If there exists a non-equilibrium point $\overline{u}$ and $\epsilon>0$ for which
\begin{equation}    \label{eq:convexity.eq1}
          v_1(u) < v_2(u) < 0, \quad \forall u \in ] \overline{u}, \overline{u} +\epsilon[,
\end{equation}
(or $0<v_2(u) <v_1(u)$ for all $u \in ]\overline{u}-\epsilon,\overline{u} [$), then $v_1$ and $v_2$
cannot vanish simultaneously when $u=\overline{u}$.
\end{lemma}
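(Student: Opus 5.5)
The plan is to argue by contradiction. I will work with the kinetic energies $w_i:=v_i^2/2$ as functions of $u$ and compare their expansions near $\overline u$. Suppose that \eqref{eq:convexity.eq1} holds and that $v_1(\overline u)=v_2(\overline u)=0$. On $]\overline u,\overline u+\epsilon[$ both $v_i$ are negative. So each trajectory is a graph over $u$ there, and \eqref{eq:systemwrtu2} holds. Writing $v_i=-\sqrt{2w_i}$, it becomes
\[
 \frac{dw_i}{du}(u)=\partial_uG(u;u_0,a)+\alpha_i\sqrt{2w_i(u)},\qquad u\in\,]\overline u,\overline u+\epsilon[ ,
\]
with $w_i(u)\to0$ as $u\downarrow\overline u$, by continuity of the trajectories. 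Since $\overline u$ is not an equilibrium and $v=0$ there, we have $g:=\partial_uG(\overline u;u_0,a)\neq0$.

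First I determine the sign of $g$ and the leading behaviour of $w_i$. The functions $w_i$ are bounded near $\overline u$ and tend to $0$. The right-hand side above is continuous up to $\overline u$, so $dw_i/du\to g$. Hence $w_i(u)=g(u-\overline u)(1+o(1))$. Since $w_i>0$ to the right of $\overline u$, this forces $g>0$. This step matches the fact that $d^2u/dy^2=\partial_uG$ at a turning point, independently of $\alpha$. It means that both curves share the same leading-order shape $v\approx-\sqrt{2g(u-\overline u)}$.

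Next I compare at the next order. Integrating from $\overline u$ gives
\[
 w_2(u)-w_1(u)=\int_{\overline u}^{u}\Big(\alpha_2\sqrt{2w_2(s)}-\alpha_1\sqrt{2w_1(s)}\Big)\,ds .
\]
By the previous step, $\sqrt{2w_i(s)}=\sqrt{2g(s-\overline u)}\,(1+o(1))$. So the integrand equals $(\alpha_2-\alpha_1)\sqrt{2g(s-\overline u)}+o\big((s-\overline u)^{1/2}\big)$, which is strictly positive for $s$ close to $\overline u$ because $\alpha_2>\alpha_1\geq0$ and $g>0$. Thus $w_2>w_1$, that is $|v_2|>|v_1|$, on some interval $]\overline u,\overline u+\delta[$. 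This means $v_2<v_1<0$ there, contradicting \eqref{eq:convexity.eq1}.

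The mirrored case, $0<v_2<v_1$ on $]\overline u-\epsilon,\overline u[$, is handled in the same way. There $v_i=+\sqrt{2w_i}$, we integrate from $u$ up to $\overline u$, and we find $\partial_uG(\overline u)<0$ with $w_i\approx|g|(\overline u-u)$. The damping term $-\alpha_i v_i$ in $dw_i/du$ then again makes the more diffusive energy larger near $\overline u$, so $v_2>v_1$, a contradiction.

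The main obstacle is the middle step. One must justify the uniform expansion $w_i=g(u-\overline u)(1+o(1))$ and control the $o(\cdot)$ terms well enough that the $\alpha$-dependent half-order term $(\alpha_2-\alpha_1)(u-\overline u)^{3/2}$ dominates. This works because the $o(1)$ in the square roots multiplies $(s-\overline u)^{1/2}$, while the $\alpha$-independent part cancels exactly in the difference.
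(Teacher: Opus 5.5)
Your proof is correct and follows the paper's argument. Both argue by contradiction: the two profiles share the leading behaviour $v_i\sim-\sqrt{2\kappa(u-\overline u)}$ with $\kappa=\partial_uG(\overline u)>0$, so the sign of $\tfrac12\frac{d}{du}(v_1^2-v_2^2)=-\alpha_1v_1+\alpha_2v_2$ near $\overline u$ contradicts the assumed ordering. Your integrated form makes the $o(\cdot)$ control explicit, and you handle the mirrored case directly instead of through the paper's symmetry $(u,v)\mapsto(-u,-v)$; these differences are only cosmetic.
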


\begin{proof}
We give the proof under the assumption \eqref{eq:convexity.eq1} and proceed by contradiction by assuming that
$v_1(\overline{u})=v_2(\overline{u})=0$. At the crossing with the $u$-axis, both solutions satisfy
\begin{equation}   \label{eq:convexity.eq2}
         \frac{d v_i}{dy}(\overline{y}_i) = \partial_u G(\overline{u};u_0,a),
\end{equation}
where $\overline{y}_i$ is such that $\overline{u}= u_i(\overline{y}_i)$. Since $\overline{u}$ is not an equilibrium, the right hand side of 
\eqref{eq:convexity.eq2} is
$$
    \kappa := \partial_u G(\overline{u}; u_0,a) \neq 0.
$$
Since as $y$ increases $u_i(y)$ decreases to $\overline{u}$ and $v_i(y)$ increases to $0$, we conclude geometrically 
that $dv_i/dy( \overline{y}_i)= \kappa >0$. Turning now to equation \eqref{eq:systemwrtu}, we find
$$
      \frac{d}{du} \bigg( \frac{1}{2} v_i^2(u) \bigg) \bigg|_{u = \overline{u} } = \kappa,
$$
so that for $u>\overline{u}$ but close to $\overline{u}$,
$$
      \frac{1}{2} v_i^2(u) \sim \kappa (u-\overline{u})  \quad \Longleftrightarrow \quad
        v_i(u) \sim - \sqrt{2 \kappa(u-\overline{u})}.
$$
Taking now the difference between the two equations  \eqref{eq:systemwrtu} for $v_i$, we find
\begin{align}  \nonumber
         \frac{1}{2} \frac{d}{du} \Big( v_1^2(u) - v_2^2(u) \Big) = & \, - \alpha_1 v_1(u) + \alpha_2 v_2(u) \\
             \simeq & \,  -(\alpha_1 - \alpha_2) \Big( - \sqrt{2 \kappa (u- \overline{u})} \Big)  < 0.    \label{eq:convexity.eq3}
\end{align}
We assumed \eqref{eq:convexity.eq1}, which meant
$$
        \frac{1}{2} \Big( v_1^2(u) - v_2^2(u) \Big) > 0
$$
for $u \in ]\overline{u}, \overline{u}+\epsilon[$. On the other hand, the two profiles vanish at $\overline{u}$, so integrating the negative derivative in \eqref{eq:convexity.eq3} from $\overline{u}$ to $u>\overline{u}$ makes this same difference negative for $u$ sufficiently close to $\overline{u}$. This contradiction excludes a common zero.

The positive case follows by the change of variables $(u,v)\mapsto(-u,-v)$, which preserves the diffusion parameter and converts $0<v_2<v_1$ into the negative ordering used above.
\end{proof}

\begin{corollary}   \label{cor:convexity}
Under the assumptions of the convexity lemma, suppose a nonconstant solution crosses the $u$-axis at $y=y^*$ with $u^*<u(y^*)<u_0$. If it is below the axis before this crossing, then
\be
            \label{eq:minimum-turn-intervals}
 u(y^*)\in(u^*,u_{K+1})\ \cup\!
 \bigcup_{k\in\{2,4,\ldots,K\}}(u_k,u_{k-1}).
\ee
If it is above the axis before the crossing, then
\be
\label{eq:maximum-turn-intervals}
 u(y^*)\in\bigcup_{k\in\{0,2,\ldots,K\}}(u_{k+1},u_k).
\ee
The union over $k\in\{2,4,\ldots,K\}$ is empty when $K=0$.
\end{corollary}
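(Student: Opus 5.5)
At a crossing $v(y^*)=0$ we have $u_{yy}(y^*)=v_y(y^*)=\partial_uG(u(y^*);u_0,a)$. The plan is to read off the sign of $\partial_uG$ from the direction of the crossing, and then locate $u(y^*)$ using the sign pattern of $G'$ from Lemma~\ref{propG}(iv), after the renumbering that follows Lemma~\ref{lem:convergence2saddle}.

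\textbf{Step 1: the turning point is regular.} Since the solution is nonconstant, $(u(y^*),0)$ cannot be an equilibrium: a nonconstant solution cannot reach an equilibrium in finite time, by uniqueness for the locally Lipschitz field \eqref{Agsystemb}. Hence $\kappa:=\partial_uG(u(y^*))\neq0$. Indeed $u(y^*)\in(u^*,u_0)$ and $v=0$, so vanishing of $\partial_uG$ would make the point an equilibrium.

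\textbf{Step 2: the sign of $\kappa$.} Suppose $v<0$ just before $y^*$ and $v(y^*)=0$. Then $v$ increases to $0$, so $v_y(y^*)\geq0$, and by Step~1 $v_y(y^*)=\kappa>0$. This is the sign argument already used in the convexity lemma, where $u$ decreases to $\overline u$ while $v$ increases to $0$. Symmetrically, if $v>0$ before the crossing, then $\kappa<0$.

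\textbf{Step 3: translate the sign into intervals.} After the renumbering, the zeros of $\partial_uG$ on $(u^*,u_0]$ are exactly
\[
u_{K+1}<u_K<\cdots<u_1<u_0 ,
\]
with saddles at the even indices and $u_{K+1}$ stable. By Lemma~\ref{propG}(iv), $\partial_uG>0$ on $(u_k,u_{k-1})$ and $\partial_uG<0$ on $(u_{k+1},u_k)$ for each even $k$. On the outer interval $(u^*,u_{K+1})$, $G'$ has constant sign because there is no zero there. That sign is positive: $G'(u^*)>0$ by \eqref{eq:explicitdG}, as already noted in the bubble lemma, and this is consistent with $u_{K+1}$ being a maximum of $G$.

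Collecting the intervals where $G'>0$ gives \eqref{eq:minimum-turn-intervals}, and collecting those where $G'<0$ gives \eqref{eq:maximum-turn-intervals}. When $K=0$ the only equilibria are $u_1$ and $u_0$, so the union over $k\in\{2,\ldots,K\}$ is empty. The remaining intervals are then $(u^*,u_1)$ for minima and $(u_1,u_0)$ for maxima.

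\textbf{Main obstacle.} The only delicate point is the bookkeeping of the renumbering: one must check that every zero of $G'$ in $(u^*,u_0)$ is among the listed $u_j$, and that the sign on the outer interval is positive. Both follow from Lemma~\ref{lem:equilibrium}, the simplicity of the equilibria in \eqref{hyp:speed-assumptions}, and $G'(u^*)>0$.
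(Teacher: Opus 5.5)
Your proposal is correct and follows essentially the same route as the paper. The turning point is regular because a nonconstant solution cannot reach an equilibrium in finite time, the sign of $v_y=G'(u(y^*))$ is read off from the direction of the crossing, and the result then follows from the alternating sign pattern of $G'$ in Lemma~\ref{propG}. Your explicit justification that $G'>0$ on $(u^*,u_{K+1})$ via \eqref{eq:explicitdG} spells out what the paper only states in passing.
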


\begin{proof}
A nonconstant solution cannot reach an equilibrium in finite time. Hence at the crossing $v_y=G'(u(y^*))\neq0$. This derivative is positive for a crossing from below and negative for a crossing from above. The alternating signs of $G'$ on the listed intervals give the result, including the outer interval $(u^*,u_{K+1})$.
\end{proof}

%
%

We now present a few additional continuity results that will be critical later in the analysis.
In the next two lemmas, $U_\alpha$ is a family of nonconstant solutions whose initial value at a fixed finite time depends $C^2$ on $\alpha$. Saddle branches have this property after a regular choice of time origin.

\begin{lemma}    \label{lem:intersectionveq0}
Consider $U_\alpha(y) = \big(u_\alpha(y), v_\alpha(y) \big)^T $ a solution to the system \eqref{Agsystemb} 
parametrized by the diffusion-dispersion ratio $\alpha$ and suppose there is an
$\alpha_0$ for which $U_{\alpha_0}$
reaches the $u$-axis at a finite time $y=y_0$. Then for all
$\alpha$ in a neighborhood of $\alpha_0$, $U_\alpha$ is transverse to the $u$-axis at the crossing $u_\alpha\big( y_0(\alpha) \big)$
and
$$
       \alpha \longmapsto u_\alpha\big( y_0(\alpha) \big)
$$
is a $C^2$ parametrization of the intersection.
\end{lemma}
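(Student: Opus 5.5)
The plan is to apply the implicit function theorem to the map $\Phi(y,\alpha) := v_\alpha(y)$ near $(y_0,\alpha_0)$. By the standing assumption just before the lemma, the initial value of $U_\alpha$ at a fixed finite time depends $C^2$ on $\alpha$. The vector field of \eqref{Agsystemb} is $C^2$ in $(u,v)$, since $G'(u)=g(u,a)-g(u_0,a)$ with $f\in C^3$, and it is affine in $\alpha$. Standard differentiable dependence on initial data and parameters therefore makes $(y,\alpha)\mapsto U_\alpha(y)$ of class $C^2$ on a neighborhood of $[y_{\rm init},y_0]\times\{\alpha_0\}$. Solutions exist up to time $y_0$ for $\alpha$ near $\alpha_0$ by continuous dependence, because $U_{\alpha_0}$ exists on a compact time interval. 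Hence $\Phi$ is $C^2$ near $(y_0,\alpha_0)$, and $\Phi(y_0,\alpha_0)=0$.

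The key step is the nondegeneracy $\partial_y\Phi(y_0,\alpha_0)\neq 0$. At the crossing, $v_{\alpha_0}(y_0)=0$, so \eqref{Agsystemb} gives $\partial_y v_{\alpha_0}(y_0)=-\alpha_0\cdot 0+\partial_uG(u_{\alpha_0}(y_0))=G'(u_{\alpha_0}(y_0))$. The solution is nonconstant and a nonconstant solution cannot reach an equilibrium in finite time, so $u_{\alpha_0}(y_0)$ is not an equilibrium. Since $G'$ vanishes exactly at the equilibria (Lemma~\ref{lem:equilibrium} and Lemma~\ref{propG}), this derivative is nonzero. This is the same observation used in Corollary~\ref{cor:convexity}. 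The tangent vector to the trajectory at the crossing is therefore $(0,G'(\bar u))^T$, which is vertical and hence transverse to the $u$-axis.

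The implicit function theorem then yields a $C^2$ function $\alpha\mapsto y_0(\alpha)$ with $y_0(\alpha_0)=y_0$ and $v_\alpha(y_0(\alpha))=0$ for $\alpha$ in a neighborhood of $\alpha_0$. Composing with the $C^2$ map $(y,\alpha)\mapsto u_\alpha(y)$ shows that $\alpha\mapsto u_\alpha(y_0(\alpha))$ is $C^2$. By continuity, $u_\alpha(y_0(\alpha))$ stays away from the finitely many equilibria after shrinking the neighborhood. Then $\partial_y v_\alpha(y_0(\alpha))=G'(u_\alpha(y_0(\alpha)))\neq0$, so the crossing remains transverse for every nearby $\alpha$.

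The main obstacle is one of bookkeeping rather than analysis, and it concerns the saddle-branch case. There the solution is only defined as an asymptotic branch, so a regular time origin must be fixed. One chooses a point on the local unstable manifold, for instance on a small circle or section around the saddle. By the stable manifold theorem with parameters, this point varies $C^2$ in $\alpha$, since the eigenvalues and eigenvectors \eqref{eigenvectors} depend smoothly on $\alpha$ when $f'(u_k)-a>0$. With this choice, the argument above applies unchanged. One should also note that $y_0(\alpha)$ is locally the relevant crossing. Any other zero of $v_\alpha$ near $y_0$ is excluded by the local uniqueness part of the implicit function theorem, combined with the fact that $v_{\alpha_0}\neq0$ on the compact part of the time interval before $y_0$ if $y_0$ is a first crossing.
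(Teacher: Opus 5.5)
Your proposal is correct and matches the paper's proof: both apply the implicit function theorem to $(y,\alpha)\mapsto v_\alpha(y)$, with nondegeneracy $\partial_y v_{\alpha_0}(y_0)=\partial_uG(u_{\alpha_0}(y_0))\neq 0$ because a nonconstant solution cannot reach an equilibrium in finite time. Your additional remarks fill in details the paper leaves implicit: persistence of transversality for nearby $\alpha$ by continuity, existence of the solutions up to time $y_0$, and the regular choice of time origin for saddle branches.
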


\begin{proof}
First of all, we remark that if $U_{\alpha_0}$ reaches the $u$-axis in a finite time $y_0$, then the
intersection $u_{\alpha_0}(y_0)$ is not an equilibrium point, i.e.
$$
       \partial_u G \big( u_{\alpha_0}(y_0); u_0, a  \big) \neq 0.
$$
The basic continuity result is that if $f$ is $C^k$ with respect to $\alpha$, then the solution $w(t,\alpha)$ to
$dw/dt(t,\alpha) = f\big( t,w(t,\alpha),\alpha \big)$ is $C^k$ with respect to $\alpha$. If we consider the function
\begin{align*}
           h: \mathbb{R}^2 & \longrightarrow \mathbb{R} \\
            (y,\alpha) & \longmapsto v_\alpha(y),
\end{align*}
then its zero set describes the set of all $(y,\alpha)$ for which $U_\alpha$ crosses the $u$-axis.
We know that $(y_0,\alpha_0)$ is a root of $h$ yet
$$
     \frac{\partial h}{\partial y}(y_0,\alpha_0) = \frac{d v_{\alpha_0}}{dy}(y_0) 
             = \partial_u G \big( u_{\alpha_0}(y_0); u_0,a \big) \neq 0.
$$
The implicit function theorem shows that the zero set containing $(y_0,\alpha_0)$
can be parametrized by a $C^2$ function $\alpha \mapsto y_0(\alpha)$, at least for $\alpha$
in a small neighborhood of $\alpha_0$. In particular, $\alpha \mapsto u_\alpha( y_0(\alpha))$
is also $C^2$.
\end{proof}

The next result is similar to the previous one but concerns intersections with a vertical line $u = \overline{u}$.

\begin{lemma}    \label{lem:intersectionueq0}
Consider $U_\alpha(y) = \big(u_\alpha(y), v_\alpha(y) \big)^T$ a solution to
the system \eqref{Agsystemb} and suppose that for some $\bar{u}$ there is a diffusion-dispersion 
ratio $\overline{\alpha}$ for which $U_{\overline{\alpha}}$ reaches  $u_{\overline{\alpha}}(\bar{y})=\bar{u}$ for
some $\bar{y}$. If $v_{\overline{\alpha}}(\bar{y}) \neq 0$, then for all $\alpha$ in a neighborhood
of $\overline{\alpha}$, $U_\alpha$ is transverse to the line $u = \bar{u}$ and 
\begin{align*}
       \alpha \longmapsto & \, \overline{y}(\alpha), \\
       \alpha \longmapsto & \, v_\alpha\big( \bar{y}(\alpha) \big),
\end{align*}
are $C^2$. 
\end{lemma}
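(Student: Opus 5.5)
This lemma is proved the same way as Lemma~\ref{lem:intersectionveq0}, using the implicit function theorem with the roles of the two coordinates exchanged. Let $y_*$ be the fixed finite time at which, by the standing convention, the initial value $U_\alpha(y_*)$ depends $C^2$ on $\alpha$. The vector field of \eqref{Agsystemb} is $C^2$ in $(u,v)$ (since $f\in C^3$ gives $\partial_uG\in C^2$) and affine in $\alpha$. The standard theorem on differentiable dependence on parameters and initial data then shows that
\[
(y,\alpha)\longmapsto U_\alpha(y)
\]
is $C^2$ on its open domain of existence. This domain contains $(\bar y,\overline{\alpha})$ and a neighbourhood of it, because solutions depend continuously on parameters on compact time intervals.

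Next, define
\[
h(y,\alpha):=u_\alpha(y)-\bar u .
\]
Its zero set is exactly the set of pairs $(y,\alpha)$ at which $U_\alpha$ meets the vertical line $u=\bar u$. We have $h(\bar y,\overline{\alpha})=0$, and by the first equation of \eqref{Agsystemb},
\[
\partial_y h(\bar y,\overline{\alpha})=v_{\overline{\alpha}}(\bar y)\neq 0 .
\]
The implicit function theorem therefore yields a neighbourhood of $\overline{\alpha}$ and a unique $C^2$ function $\alpha\mapsto\bar y(\alpha)$ with $\bar y(\overline{\alpha})=\bar y$ and $u_\alpha(\bar y(\alpha))=\bar u$. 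Consequently $\alpha\mapsto v_\alpha(\bar y(\alpha))$ is the composition of the $C^2$ map $(y,\alpha)\mapsto v_\alpha(y)$ with the $C^2$ map $\alpha\mapsto(\bar y(\alpha),\alpha)$, and so it is $C^2$.

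For transversality, note that $(y,\alpha)\mapsto v_\alpha(y)$ is continuous and nonzero at $(\bar y,\overline{\alpha})$. Shrinking the neighbourhood, we get $v_\alpha(\bar y(\alpha))\neq0$ for all $\alpha$ in it. Since the line $u=\bar u$ has normal direction $(1,0)$, the condition $v\neq 0$ means the trajectory crosses the line transversally, with $u_\alpha$ strictly monotone near $\bar y(\alpha)$.

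The only point needing care is the regularity of the flow in the parameter. For saddle branches, the initial condition must be taken after a regular choice of time origin on the unstable manifold; the paper assumes this $C^2$ dependence in the remark preceding these lemmas. Beyond that, the argument is routine.
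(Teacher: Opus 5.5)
Your proposal is correct and follows essentially the same route as the paper. Both apply the implicit function theorem to $h(y,\alpha)=u_\alpha(y)-\bar u$, using $\partial_y h=v_{\overline{\alpha}}(\bar y)\neq0$, then compose to get $C^2$ regularity of $\alpha\mapsto v_\alpha(\bar y(\alpha))$. Your added justifications of the $C^2$ dependence of the flow and of transversality by continuity are details the paper leaves implicit.
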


\begin{proof}
The proof is very similar to the previous one. In this case, the set of all $(y,\alpha)$
for which $u_\alpha(y) = \bar{u}$ is the zero set of
\begin{align*}
           h: \mathbb{R}^2 & \longrightarrow \mathbb{R} \\
            (y,\alpha) & \longmapsto u_\alpha(y) - \bar{u}.
\end{align*} 
We note that because of our assumption that $v_{\overline{\alpha}}(\bar{y}) \neq 0$, then
$$
     \frac{\partial h}{\partial y}(\bar{y}, \overline{\alpha}) = \frac{d u_{\overline{\alpha}} }{dy}( \bar{y}) 
           = v_{\overline{\alpha}}(\bar{y}) \neq 0.
$$
The implicit function theorem therefore states that the zero set near $(\bar{y},\overline{\alpha})$
can be parametrized by a  $C^2$ function $\alpha \mapsto \bar{y}(\alpha)$. The function
$\alpha \mapsto v_\alpha\big(  \bar{y}(\alpha) \big)$ is also $C^2$.
\end{proof}

Although the solutions rotate clockwise, we have never needed to account for the number
of crossings made by one of these solutions. We correct the situation with the following definition.

\begin{definition}
A traveling wave solution $U(y) = \big( u(y), v(y) \big)^T $ of \eqref{Agsystemb} is said to have
$n$ {\bf oscillations} if there exists exactly $n$ points $y_1> \cdots > y_n$
such that $u$ is strictly monotone between these points and alternates
between increasing and decreasing over the intervals delimited by these points. 
\end{definition}

We now introduce two additional trajectories that will help us to understand what happens to trajectories 
for $\alpha$ strictly above and below a critical value $\alpha_n(k)$ at which a trajectory
leaving $u_0$ reaches a saddle equilibria $u_k$ after $n$ oscillations.

\begin{definition}   \label{defn:extendedtrajectory}
Suppose that one has a value ${\alpha}$ of the diffusion-dispersion ratio for which there is
an $n$-oscillation trajectory $U_{{\alpha}}$  leaving $u_0$ in $Q^3_0$ and reaching the saddle equilibria $u_k$ through $Q^\ell_k$.
If $n$ is even, then $\ell$ must be $4$ and one defines 
$$
     \underline{\ell} = 1\,, \quad \overline{\ell} = 3,
$$
while if $n$ is odd, then $\ell$ must be $2$ and one defines
$$
     \underline{\ell} = 3\,, \quad \overline{\ell} = 1.
$$
One also defines the inner extension to $C^3_{0,\alpha}$ to be
$$
       \underline{C}^3_{0,{\alpha}} := C^{\underline{\ell}}_{k,{\alpha}},
$$
and the outer extension to be
$$
       \overline{C}^3_{0,{\alpha}} :=  C^{\overline{\ell}}_{k,{\alpha}}.
$$
\end{definition}


\begin{lemma}[extension lemma]   \label{lem:extendedtrajectory}
Suppose that at $\alpha_0>0$ the trajectory $C^3_{0,\alpha_0}$ connects the hyperbolic saddles $u_0$ and $u_k$. Let $U_\alpha$, $\underline U$, and $\overline U$ denote solutions corresponding to $C^3_{0,\alpha}$ and its inner and outer extensions at $\alpha_0$. For every fixed $y\in\RR$ and $\epsilon>0$ there is $0<\delta<\alpha_0$ such that both conclusions hold:
\begin{itemize}
\item for every $\alpha\in(\alpha_0,\alpha_0+\delta)$, some time $y_{\epsilon,\alpha}$ satisfies $\|\underline U(y)-U_\alpha(y_{\epsilon,\alpha})\|<\epsilon$;
\item for every $\alpha\in(\alpha_0-\delta,\alpha_0)$, some time $\bar y_{\epsilon,\alpha}$ satisfies $\|\overline U(y)-U_\alpha(\bar y_{\epsilon,\alpha})\|<\epsilon$.
\end{itemize}
\end{lemma}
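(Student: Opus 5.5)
The argument is the standard passage near a hyperbolic saddle: continuity in $\alpha$ on compact time intervals, combined with a splitting of the unstable branch across the stable manifold of $u_k$. I would work in $C^1$ linearizing (Hartman--Grobman, or a simple box) coordinates in a small neighbourhood $N$ of $(u_k,0)$. In these coordinates the local stable curve is the axis spanned by $\underline r_k$ and the local unstable curve is the axis spanned by $\overline r_k$. Because $u_k$ is a hyperbolic saddle for all $\alpha$ near $\alpha_0$, these local invariant curves depend $C^1$ on $\alpha$.

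The first step is to fix the entrance section. Pick a time $y_1$ at which $U_{\alpha_0}(y_1)$ lies on the local stable branch $C^\ell_{k,\alpha_0}$ inside $N$. Let $\Sigma$ be a short segment through this point that is transverse to the stable curve. By continuous dependence on $\alpha$ over the compact interval $(-\infty,y_1]$, the branch $C^3_{0,\alpha}$ also meets $\Sigma$ for $\alpha$ near $\alpha_0$. Here I would start from the local unstable manifold of $u_0$, which depends $C^1$ on $\alpha$, and then use Lemma \ref{lem:intersectionueq0}-type implicit function arguments. The intersection point is $C^1$ in $\alpha$.

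The second step is the key one: determining on which side of the stable curve $C^\ell_{k,\alpha}$ the branch $C^3_{0,\alpha}$ lands. I would use the monotonicity lemma, Lemma \ref{lem:monotonicity}, on the last monotone arc entering $u_k$, together with the convexity lemma, Lemma \ref{lem:convexity}.

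Take $n$ even, so the branch enters through $Q^4_k$ from the right, below the axis. By part (i), the branch $C^3_{0,\alpha}$ for larger $\alpha$ lies above (closer to the axis) and, extended through the earlier turns, is displaced toward the inside. Part (ii) applied to $C^4_{k,\alpha}$ shows that the stable curve moves the opposite way. Hence for $\alpha>\alpha_0$ the branch crosses $\Sigma$ on the side that exits along $C^{1}_{k}$ (the inner extension). For $\alpha<\alpha_0$ it crosses on the side that exits along $C^3_{k}$ (the outer extension). The case $n$ odd is symmetric under $(u,v)\mapsto(-u,-v)$.

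I expect this side determination to be the main obstacle. It requires propagating the ordering through all $n$ earlier turns, and those comparisons only hold on common monotone arcs. A cleaner alternative I would try first is a signed Melnikov-type variation. Differentiating $H$ along the branch gives $\partial_\alpha H = -\int v^2\,dy<0$ at the entrance section, which gives the strict sign directly. Since the energy determines on which side of the stable separatrix a point lies near the saddle, this settles the side.

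Finally, I would apply the $\lambda$-lemma (a linear estimate in the box). A point on $\Sigma$ at distance $d$ from the stable curve exits $N$ after time of order $|\log d|/\overline\mu_k$, at distance $O(d^{\,|\underline\mu_k|/\overline\mu_k})$ from the chosen unstable branch, and on the side where it entered. As $\alpha\to\alpha_0^\pm$ we have $d\to0$. The exit point therefore converges to a point of $C^{\underline\ell}_{k,\alpha_0}$ or $C^{\overline\ell}_{k,\alpha_0}$ on the exit section. Continuous dependence on a compact time interval then carries this to the point $\underline U(y)$ or $\overline U(y)$ for the given $y$ (if $y$ lies inside $N$, use the exit section directly), and the extensions vary continuously in $\alpha$. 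Choosing $\delta$ so that all the errors are below $\epsilon$ concludes the proof.
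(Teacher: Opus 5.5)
Your architecture is the paper's: a section on the final monotone arc into $u_k$, a signed splitting between the continued unstable branch of $u_0$ and the stable branch of $u_k$, a saddle-passage estimate $|s_{\rm exit}|\lesssim|\eta_\alpha|^{m/M}$, and continuous dependence on a finite time interval. The gap is the step you flag yourself: the sign of the splitting once the connection has turns. For $n=0$, your use of parts (i) and (ii) of the monotonicity lemma already settles it. For $n\geq1$, however, the claim that the larger-$\alpha$ branch is ``displaced toward the inside'' through the earlier turns is asserted, not proved.

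Your fallback does not repair this. The identity $\partial_\alpha H=-\int v^2\,dy$ is the Melnikov integral of the Hamiltonian case $\alpha_0=0$. Since $H(U_\alpha(y))=-\alpha\int_{-\infty}^{y}v_\alpha^2\,ds$, differentiating at a point of a fixed section also produces the term $-\alpha_0\,\partial_\alpha\int_{-\infty}^{y(\alpha)}v_\alpha^2\,ds$, which has no evident sign. More importantly, the side is decided relative to the stable branch of $u_k$ at the \emph{same} $\alpha$. For $\alpha>0$ that branch is not a level set of $H$, and it moves with $\alpha$. Its energy on the section is $-G(u_k)+\alpha\int_y^\infty v_\alpha^2\,ds$, which carries the same kind of unsigned term. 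So you need the sign of $\partial_\alpha(H^u-H^s)$ on the section, and your argument controls neither term.

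The missing ingredient is the divergence weight. Set $X_\alpha=(v,q-\alpha v)^T$, $w=\partial_\alpha U_\alpha$ and $W=\det(X_{\alpha_0},w)$. Then $W_y=-\alpha_0W-v^2$, because $\operatorname{tr}DX_{\alpha_0}=-\alpha_0$ and $\det(X_{\alpha_0},(0,-v)^T)=-v^2$. On a vertical section, after a time shift, $W=v\,\partial_\alpha v=\partial_\alpha H$ at fixed $u$. Integrating from the appropriate end gives $W^u=-e^{-\alpha_0y}\int_{-\infty}^ye^{\alpha_0s}v^2\,ds<0$ and $W^s=e^{-\alpha_0y}\int_y^\infty e^{\alpha_0s}v^2\,ds>0$. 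The boundary terms vanish because $\overline\mu_0>0$ and $\underline\mu_k<-\alpha_0$. Subtracting yields the paper's \eqref{eq:shooting-transversality}, $v_0\Delta'(\alpha_0)=-\int_{\RR}e^{\alpha_0y}v^2\,dy<0$. Your sign for the unstable branch alone is therefore correct, but only with the weight, and the stable branch contributes to the difference with the same sign.

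Alternatively, your first route can be completed with tools already in the paper, but it needs part (iii) of the monotonicity lemma, which you did not invoke:
\begin{itemize}
\item For $\alpha$ near $\alpha_0$, the turning points of $C^3_{0,\alpha}$ stay in the same open intervals of constant sign of $G'$ as those of the connection (Lemma \ref{lem:intersectionveq0}).
\item On each arc the larger-$\alpha$ solution is closer to the axis, so it reaches $v=0$ first; the convexity lemma makes this strict.
\item Part (iii) then carries the ordering onto the next arc.
\item On the final arc, part (ii) places the stable branch of $u_k$ on the opposite side.
\end{itemize}
That route gives the side, which is all this lemma needs. The weighted identity gives more: it also shows the zero is simple, which the paper reuses in Theorems \ref{thm:bowtie1} and \ref{thm:generalbowtie}.
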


\begin{proof}
Set $q(u)=\partial_uG(u)$ and $X_\alpha(u,v)=(v,q(u)-\alpha v)^T$. Choose a vertical section $u=\sigma$ on the final monotone segment of the connecting orbit, and put its crossing at $y=0$. Continue the incoming branch from $u_0$ and the stable branch of $u_k$ to this section for $\alpha$ near $\alpha_0$. Their heights $v^u_\alpha(\sigma)$ and $v^s_\alpha(\sigma)$ depend differentiably on $\alpha$. Define $\Delta(\alpha)=v^u_\alpha(\sigma)-v^s_\alpha(\sigma)$ and write $v_0=v_{\alpha_0}(0)\neq0$.

We first compute the sign of the splitting. A variation $w=\partial_\alpha U_\alpha$ along either branch satisfies $w_y=DX_{\alpha_0}w+(0,-v)^T$. For $W=\det(X_{\alpha_0},w)$, differentiation gives
\be
\label{eq:determinant-variation}
 W_y=-\alpha_0W-v^2,\qquad (e^{\alpha_0y}W)_y=-e^{\alpha_0y}v^2.
\ee
On the unstable branch $e^{\alpha_0y}W\to0$ as $y\to-\infty$; on the stable branch it tends to zero as $y\to+\infty$. These limits follow from the saddle eigenvalues: $\lambda_+>0$ at $u_0$, and $\lambda_-<-\alpha_0$ at $u_k$. Parameter differentiation can introduce a factor $1+|y|$, which preserves exponential decay. At the section the variations have zero $u$-component, so subtraction yields
\be
\label{eq:shooting-transversality}
 v_0\Delta'(\alpha_0)=-\int_{-\infty}^{\infty}e^{\alpha_0y}v(y)^2\,dy<0.
\ee
The integral is finite by the same eigenvalue estimates. In particular the zero of the continued shooting function is simple.

If the number of turns is even, $v_0<0$, hence $\Delta'(\alpha_0)>0$. Increasing $\alpha$ places the incoming orbit above the stable curve and sends it to the right outgoing branch $C^1_{k,\alpha}$; decreasing $\alpha$ sends it to $C^3_{k,\alpha}$. If the number is odd, $v_0>0$ and the signs reverse: the respective outgoing branches are $C^3_{k,\alpha}$ and $C^1_{k,\alpha}$. These are precisely the inner and outer extensions in the definition above.

We justify convergence along those outgoing branches. Take the local stable and unstable curves used to define the saddle separatrices as coordinate axes $(s,t)$, with $s$ the stable coordinate. In a sufficiently small square $|s|,|t|\leq\rho$, invariance of the axes gives
\be
\label{eq:local-saddle-coordinates}
 s_y=(-\beta_\alpha+r_s(s,t,\alpha))s,\qquad
 t_y=(\gamma_\alpha+r_t(s,t,\alpha))t,
\ee
where $\beta_\alpha,\gamma_\alpha>0$ and $r_s,r_t$ tend to zero at the origin, uniformly near $\alpha_0$. Thus there are constants $m,M>0$, independent of nearby $\alpha$, such that $|s|$ decreases at rate at least $m$ and $|t|$ increases at a rate between $m$ and $M$. On an incoming section $|s|=\rho$, the transverse coordinate is $t=\eta_\alpha\to0$, with the sign just determined by \eqref{eq:shooting-transversality}. Until the exit $|t|=\rho$, the residence time $T_\alpha$ and stable coordinate satisfy
\be
\label{eq:saddle-exit-estimate}
 T_\alpha\geq\frac1M\log\frac{\rho}{|\eta_\alpha|},\qquad
 |s_{\rm exit}|\leq\rho\left(\frac{|\eta_\alpha|}{\rho}\right)^{m/M}\longrightarrow0.
\ee
The exit points therefore converge to the appropriate unstable separatrix at $\alpha_0$. For any prescribed point on that separatrix we choose the square sufficiently small that the point lies after the exit. Continuous dependence over the remaining finite time interval, followed by a time translation, proves both assertions.
\end{proof}


\section{General construction}
\label{sec:general}

In this section, we study saddle connections and attraction intervals for the traveling-wave equation as the diffusion-dispersion ratio varies.  
The main conclusion of the construction is that
it may not be possible to find travelling waves connecting $u_0$ to every intermediate
saddle equilibrium, and the study will identify conditions under
which this may occur.

We begin by treating a generic flux whose bubbles form an elementary bowtie. This will become the simplest case of an 
analysis involving nested bowties, but it is a good way to build our understanding.
In this elementary case $K\geq2$. We will be studying an elementary isolated bowtie, that is we will
initially be assuming that the generic flux and the quadratic entropy produce a sequence of $K+2$
equilibrium points whose initial configuration of bubbles are $RD \cdots DL$. So for all $k \in \{ 3,5, \ldots, K-1 \}$,
the intervals $]u_{k+1}, u_{k-1}[$ will be dual-constrained and
$$
     C^1_{k+1,0} = C^2_{k-1,0} \quad \text{and} \quad C^3_{k-1,0} = C^4_{k+1,0},
$$
while $C^3_{K,0}$ and $C^1_{2,0}$ will each contain a single equilibrium point.

The next three existence statements and Theorem~\ref{thm:bowtie1} are proved together after the theorem, using continuation up to the first failure to pass the next saddle. This avoids presuming that every shooting branch crosses a chosen section for all $\alpha$.

\begin{lemma}  \label{lem:bowtie1}
Suppose that $C^3_{0,0}$ contains only an elementary bowtie. Then there exists a 
diffusion-dispersion ratio $\alpha_0(2)$ for which there is a $0$-oscillation trajectory connecting $u_0$ to $u_2$.
\end{lemma}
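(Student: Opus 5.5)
The plan is a one-parameter shooting argument in $\alpha$ on the branch $C^3_{0,\alpha}$. The branch is compared with the stable branch $C^4_{2,\alpha}$ of $u_2$ on a fixed vertical section $u=\sigma$ with $u_2<\sigma<u_1$.

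\emph{Geometry at $\alpha=0$.} By the bubble lemma (Lemma~\ref{lem:bubble}), $C^3_{0,0}=C^2_{0,0}$ is the outer loop with turning point $u^*(a,u_0)$. In the elementary bowtie configuration $RD\cdots DL$, the interval $]u_2,u_0[$ is left-constrained. Hence the bubble through $u_2$, bounded by $C^1_{2,0}=C^4_{2,0}$, lies strictly inside $C^3_{0,0}$. On the section, $C^3_{0,0}$ therefore passes strictly below $C^4_{2,0}$, since both lie in $v<0$ and the outer curve has the larger $|v|$.

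\emph{Behaviour for large $\alpha$.} For $\alpha$ large, $\overline{\mu}_0\to 0^+$ and the unstable direction at $u_0$ becomes horizontal. Using \eqref{pos_constraint}, namely $v^2/2\le G(u)-\alpha\int v^2$, I would show that $|v|$ along $C^3_{0,\alpha}$ is $O(1/\alpha)$ on $[\sigma,u_0]$. One way is to compare with the overdamped first-order flow $v\approx G'(u)/\alpha$, which reaches $u_1$ monotonically without turning. In particular $C^3_{0,\alpha}$ converges to the stable node $u_1$ without ever reaching $\sigma$. So the branch is trapped before reaching $u_2$, or at least passes above $C^4_{2,\alpha}$ in the sense of Lemma~\ref{lem:monotonicity}. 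For the same large $\alpha$, $C^4_{2,\alpha}$ tends toward the fast stable direction with $\underline{\mu}_2\approx-\alpha$, so it is steep and reaches $\sigma$ with $|v|$ of order $\alpha(\sigma-u_2)$. This confirms that the unstable branch is now above it.

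\emph{Intermediate value step.} Let $\alpha_0(2)$ be the supremum of the set $A$ of $\alpha$ for which $C^3_{0,\alpha}$ reaches $\sigma$ monotonically (with $v<0$) and strictly below $C^4_{2,\alpha}$. The set $A$ contains $0$ and is open, by Lemma~\ref{lem:intersectionueq0} and continuous dependence of the stable branch on $\alpha$. By the previous step $A$ is bounded. At $\alpha_0(2)$, continuity gives either equality of heights on the section or loss of transversal crossing of $u=\sigma$.

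The main obstacle is excluding that second alternative. If the branch ceases to reach $\sigma$, it must first touch $v=0$. This happens at a regular turning point by Corollary~\ref{cor:convexity}, and by Lemma~\ref{lem:intersectionveq0} such a turn persists in a neighborhood. Nearby members of $A$ would then also turn before $\sigma$, which is impossible. The degenerate limit is a tangency at an equilibrium, which can only be $u_1$, a stable point; a branch in $A$ cannot converge to $u_1$ and still reach $\sigma$. Hence at $\alpha_0(2)$ the heights coincide, and $C^3_{0,\alpha_0(2)}=C^4_{2,\alpha_0(2)}$ is a connection with $v<0$ throughout. This is a $0$-oscillation trajectory from $u_0$ to $u_2$.

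The sign computation \eqref{eq:shooting-transversality} from the extension lemma is useful in two ways. It shows that the splitting $\Delta$ changes sign transversally. It also shows that the crossing is isolated, which rules out $\Delta$ vanishing at the supremum only in a degenerate manner.
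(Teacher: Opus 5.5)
Your proposal is correct and is essentially the paper's argument. It runs a shooting/continuation in $\alpha$ in which the set of parameters for which $C^3_{0,\alpha}$ passes $u_2$ (equivalently, lies strictly below $C^4_{2,\alpha}$ on your section) has four properties: it is open; it contains all small $\alpha$; it is bounded, because $|v|\le M=\sqrt{2\max G}$ on the outgoing branch by \eqref{pos_constraint} while the stable branch satisfies $v^s_\alpha(u)<-\alpha(u-u_2)$ on $(u_2,u_1)$; and its boundary point must be a connection, since turning and capture by the sink $u_1$ are open conditions.

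Two spots need tightening. First, derive the steepness of $C^4_{2,\alpha}$ from the global inequality $v_u=-\alpha+G'(u)/v<-\alpha$ on $(u_2,u_1)$ rather than from the eigenvector; the $O(1/\alpha)$ heuristic is then unnecessary. Second, exclude the limiting case of convergence to $u_1$ by openness of that sink's basin of attraction under perturbation of $\alpha$, as the paper does, not merely by observing that a branch in $A$ does not converge to $u_1$.
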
 


\begin{lemma} \label{lem:bowtie2}
Suppose that $C^3_{0,0}$ contains only an elementary bowtie  and that an $n$-oscillation trajectory
connects $u_0$ to a saddle equilibrium $u_k$ for some diffusion-dispersion ratio $\alpha_n(k)$. 
If $n$ is even and $k \in \{ 2, \ldots, K-2\}$, then there exists an $n$-oscillation trajectory connecting $u_0$ to $u_{k+2}$
for a diffusion-dispersion ratio $\alpha_n(k+2) < \alpha_n(k)$.  
If $n$ is odd and $k \in \{ 4, \ldots, K\}$, then there exists an $n$-oscillation trajectory connecting $u_0$ to $u_{k-2}$
for a diffusion-dispersion ratio $\alpha_n(k-2) < \alpha_n(k)$.
\end{lemma}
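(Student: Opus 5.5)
We argue by continuation in $\alpha$ downward from $\alpha_n(k)$, using the extension lemma (Lemma~\ref{lem:extendedtrajectory}) to find where the incoming branch sits just below the critical ratio. Take $n$ even. The connecting orbit then reaches $u_k$ through $Q^4_k$, and the outer extension is $\overline C^3_{0,\alpha_n(k)}=C^3_{k,\alpha_n(k)}$. By the extension lemma, for $\alpha$ slightly below $\alpha_n(k)$ the branch $C^3_{0,\alpha}$ shadows $C^3_{k,\alpha_n(k)}$. That branch leaves $u_k$ to the left, below the axis.

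In the elementary bowtie configuration at $\alpha=0$, we have $C^3_{k,0}=C^4_{k+2,0}$, a saddle connection to $u_{k+2}$. So the first step is to locate $C^3_{k,\alpha}$ for small $\alpha>0$. By the monotonicity lemma (case (i), $i=3$), it lies strictly above $C^3_{k,0}=C^4_{k+2,0}$ while below the axis. It therefore passes inside the bubble through $u_{k+1}$ and turns at a point of $(u_{k+2},u_{k+1})$, by Corollary~\ref{cor:convexity}. It cannot reach $u_{k+2}$, since $H$ strictly decreases, and it is then trapped in the absorbing bubble around $u_{k+1}$. For $\alpha$ close to $\alpha_n(k)$ we instead use the shadowing to compare $C^3_{0,\alpha}$ with this picture.

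Next we define a shooting function on the interval $(\beta,\alpha_n(k))$. Place a vertical section $u=\sigma$ with $\sigma\in(u_{k+2},u_{k+1})$, close to $u_{k+2}$. Let $\Delta(\alpha)$ be the height at the section of the continued branch $C^3_{0,\alpha}$ (on its last descending arc after $n$ turns), minus the height of the stable branch $C^4_{k+2,\alpha}$. We continue $\alpha$ downward from $\alpha_n(k)$ until the first value where $C^3_{0,\alpha}$ no longer makes exactly $n$ turns before reaching the strip near $u_{k+2}$. The turn count is preserved on this range by Lemma~\ref{lem:intersectionveq0} and the convexity lemma, since turning points vary $C^2$ and cannot merge.

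Near $\alpha_n(k)$, shadowing $C^3_{k,\alpha}$ puts $C^3_{0,\alpha}$ strictly above $C^4_{k+2,\alpha}$ at the section, so $\Delta>0$; this uses the monotonicity comparison, because $C^3_{k,\alpha}$ lies above $C^4_{k+2,0}$, which in turn lies below $C^4_{k+2,\alpha}$ by case (ii). At the other end, as $\alpha$ decreases toward $0$ or toward the failure value, $C^3_{0,\alpha}$ approaches the outer bubble $C^3_{0,0}$ and passes below $C^4_{k+2,\alpha}$, so $\Delta<0$. This uses Lemma~\ref{lem:monotonicity} (i), which keeps $C^3_{0,\alpha}$ above-left of the Hamiltonian loops, together with the nesting $C^4_{k+2,0}\subset C^3_{0,0}$. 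The intermediate value theorem then gives $\alpha_n(k+2)<\alpha_n(k)$ with $\Delta=0$, which is an $n$-oscillation connection to $u_{k+2}$. By \eqref{eq:shooting-transversality} this zero is simple.

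The odd case is the mirror image. Here the connection arrives through $Q^2_k$, and the outer extension is $C^1_{k,\alpha}$, which shadows $C^1_{k,0}=C^2_{k-2,0}$; it exits to the right and sits above the axis. The same shooting argument, with the section in $(u_{k-1},u_{k-2})$, produces the connection to $u_{k-2}$.

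The main obstacle is the sign of $\Delta$ at the lower end of the continuation interval. We must show that before $C^3_{0,\alpha}$ loses a turn, it is forced below the stable curve of $u_{k+2}$. I would first try comparing its $n$-th turning point with that of the $\alpha=0$ outer loop via Lemma~\ref{lem:monotonicity}(iii) on the outer intervals. Failing that, I would argue directly that any loss of a turn must happen by passing through $u_{k+2}$'s stable curve first, since turning points can only leave an interval by crossing an equilibrium, and near $u_{k+2}$ that crossing is exactly a saddle connection.
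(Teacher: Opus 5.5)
Your proposal has a genuine gap exactly where you locate it: nothing in it forces $\Delta<0$ at the lower end of the continuation interval, and the tools you suggest cannot do so. Lemma~\ref{lem:monotonicity} only compares common monotone arcs issuing from a saddle, or from turning points in a common interval. After $n$ turns it gives no ordering. Even on the first arc it only says that $C^3_{0,\alpha}$ lies \emph{inside} the outer loop $C^3_{0,0}$, which is the wrong direction: to lie below $C^4_{k+2,\alpha}$ you need an upper bound on the energy already dissipated. Continuous dependence as $\alpha\to0$ does not supply this once $n\geq2$, because between turns the trajectory passes near the saddle $u_0$ for a time that is unbounded as $\alpha\to0$.

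The missing idea is the paper's uniform dissipation bound \eqref{eq:monotone-segment-loss}: on every monotone segment $\int v^2\,dy=\int|v|\,|du|\leq ML$, so after $N$ segments $H\geq-\alpha NML$, independently of the time spent near saddles. All internal saddles share the height $h=G(u_2)=\cdots=G(u_K)$, and $G\geq h$ on $[u_K,u_2]$. Hence for $\alpha(n+1)ML<h$ the trajectory can neither turn inside the bowtie nor be captured, and it crosses $u=u_{k+2}$ with $v\neq0$ after exactly $n$ turns.

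The upper end also needs repair. For $\alpha$ just below $\alpha_n(k)$, $C^3_{0,\alpha}$ follows $C^3_{k,\alpha_n(k)}$, which turns in $(u_{k+2},u_{k+1})$ and is absorbed by $u_{k+1}$. With $\sigma$ close to $u_{k+2}$ it need not reach the section at all. So $\Delta$ is undefined there, your continuation condition already fails at $\alpha_n(k)$, and the intermediate value theorem on a fixed section does not apply as set up. Your comparison chain also uses case~(ii) in the wrong direction. For $i=4$, $C^4_{k+2,\alpha}$ lies \emph{below} $C^4_{k+2,0}$; only then does $C^4_{k+2,\alpha}<C^4_{k+2,0}=C^3_{k,0}<C^3_{k,\alpha}$ give the sign you want.

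Finally, the ``failure value'' is uncontrolled. Turns are not lost by merging but by escaping to infinite time at a saddle connection, or by capture, and you do not say which happens first. The paper avoids this by working with the set $P_r\subset(0,\alpha_n(k))$ of parameters that pass the target $u_{k+2}$ after $n$ turns:
\begin{itemize}
\item it is open, by transversality of the crossing;
\item it contains $(0,\epsilon)$, by the bound above;
\item it misses a left neighbourhood of $\alpha_n(k)$, by Lemma~\ref{lem:extendedtrajectory} and absorption.
\end{itemize}
At a boundary point the trajectory has already passed the previous target. This is the inductive statement that every smaller $\alpha$ passes it, which is why the paper proves these lemmas together with Theorem~\ref{thm:bowtie1}. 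At that point it can be neither captured by $u_{k+1}$ nor passing, since both are open conditions. So by Lemma~\ref{lem:convergence2saddle} it must connect to $u_{k+2}$. Your second suggestion points this way, but these openness and induction ingredients are what turn it into a proof.
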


\begin{lemma}  \label{lem:bowtie3}
Suppose that $C^3_{0,0}$ contains only an elementary bowtie. 
Suppose $n$ is even and there exists an $n$-oscillation trajectory connecting $u_0$ to $u_K$,
then there exists $\alpha_{n+1}(K)< \alpha_n(K)$ for which there is an $(n+1)$-oscillation
trajectory connecting $u_0$ to $u_K$. 
Suppose $n$ is odd and there exists an $n$-oscillation trajectory connecting $u_0$ to $u_2$, then
there exists $\alpha_{n+1}(2) < \alpha_n(2)$ for which there exists an $(n+1)$-oscillation 
trajectory connecting $u_0$ to $u_2$.
\end{lemma}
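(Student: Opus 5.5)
We argue by continuation in $\alpha$ below $\alpha_n(K)$, using the extension lemma to see where the outgoing trajectory goes just below the critical ratio. Consider first $n$ even with $C^3_{0,\alpha_n(K)}$ connecting $u_0$ to $u_K$ after $n$ oscillations. By Definition~\ref{defn:extendedtrajectory} it arrives through $Q^4_K$, and the outer extension is $\overline{C}^3_{0,\alpha_n(K)}=C^3_{K,\alpha_n(K)}$. By Lemma~\ref{lem:extendedtrajectory}, for $\alpha$ slightly below $\alpha_n(K)$ the trajectory $C^3_{0,\alpha}$ shadows $C^3_{K,\alpha_n(K)}$. It therefore passes to the left of $u_K$, makes one further turn in the outer interval $(u^*,u_{K+1})$ (Corollary~\ref{cor:convexity}, and Lemma~\ref{lem:convexity} applied to $C^3_{K,\alpha_n(K)}$), and then travels above the axis with $n+1$ oscillations so far. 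The odd case at $u_2$ is symmetric, with the outer extension $C^1_{2,\alpha_n(2)}$ turning in $(u_1,u_0)$ and returning below the axis.

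The key step is to track the height of $C^3_{0,\alpha}$ on a vertical section $u=\sigma$ between $u_{K+1}$ and $u_K$, after this extra turn, and compare it with the stable branch $C^2_{K,\alpha}$ at the same section. Call the difference $\Delta(\alpha)$, defined as in the proof of the extension lemma. There are two endpoint regimes.
\begin{itemize}
\item For $\alpha$ just below $\alpha_n(K)$, the shadowing of $C^3_{K,\alpha}$ combined with the monotonicity lemma~\ref{lem:monotonicity}(i) gives one sign of $\Delta$. The shadowing orbit lies inside the homoclinic-type region generated by $C^3_{K,\alpha}$ and falls short of $u_K$, so it is attracted to $u_{K+1}$.
\item As $\alpha$ decreases, the trajectory tends to the zero-diffusion configuration. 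Here $C^3_{K,0}=C^2_{K,0}$ by the bubble lemma, and $C^3_{0,\alpha}$ lies close to the outer loop $C^3_{0,0}$ for longer and longer. Using Lemma~\ref{lem:monotonicity}(ii) for $C^2_{K,\alpha}$ and the comparison (iii) on $(u^*,u_{K+1})$, the trajectory after its extra turn should pass above $C^2_{K,\alpha}$, i.e. overshoot past $u_K$, for $\alpha$ small enough (or at the first failure point of the continuation).
\end{itemize}

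The sign change of $\Delta$ is then established by continuation. Let $\beta$ be the infimum of the set of $\alpha<\alpha_n(K)$ such that, on $(\alpha,\alpha_n(K))$, $C^3_{0,\alpha}$ still reaches the section $\sigma$ with $n+1$ monotone arcs. On this interval, Lemmas~\ref{lem:intersectionveq0} and \ref{lem:intersectionueq0} make the turning points and section heights $C^2$ in $\alpha$. Continuation can fail only if the trajectory converges to an equilibrium before $\sigma$ or a turning point degenerates. A degenerate turning point is excluded by Lemma~\ref{lem:convexity}, and convergence to an equilibrium before $\sigma$ can only occur at a saddle connection or through capture in an absorbing bubble (the corollary to Lemma~\ref{lem:monotonicity}). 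Either failure forces $\Delta$ to change sign on the interval. The intermediate value theorem then yields $\alpha_{n+1}(K)\in(\beta,\alpha_n(K))$ with $\Delta=0$, which is an $(n+1)$-oscillation connection to $u_K$.

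The main obstacle is the second endpoint regime: showing that, before the continuation fails, the trajectory overshoots $u_K$ rather than being trapped at $u_{K+1}$ forever. I would try to use the elementary bowtie structure, namely that all internal bubbles are dual-constrained so the arc $C^2_{K,0}$ is the lower boundary of an absorbing region. I would combine this with the energy identity \eqref{eq:lyapunov-correction}. As $\alpha\downarrow 0$, the energy loss $\alpha\int v^2$ along the $n+1$ arcs tends to zero, so the trajectory arrives at $\sigma$ with energy above $H(u_K,0)$, which places it above $C^2_{K,\alpha}$. If instead the continuation stops at some $\beta>0$ through a connection to another saddle, that connection itself furnishes the required sign change.
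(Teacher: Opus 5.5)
\paragraph{Where the argument breaks.} Your overall plan matches the paper: capture just below $\alpha_n(K)$ via the extension lemma, passage of $u_K$ for small $\alpha$, and a connection in between. Your small-$\alpha$ step is essentially the paper's. With $M=\sqrt{2\max G}$ bounding $|v|$ and $L=u_0-u^*$, each monotone arc loses energy $\alpha\int v^2\,dy=\alpha\int|v|\,|du|\le\alpha ML$. So for $\alpha NML<h$ the energy stays above $-h=H(u_K,0)$, no turn can occur in $(u_{K+1},u_K)$, and $u_K$ is crossed with nonzero velocity.

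The gap is in the step meant to produce the connection.
\begin{itemize}
\item Your $\Delta$ need not be defined just below $\alpha_n(K)$. There the orbit follows $C^3_{K,\alpha_n(K)}$ into the left bubble. It may converge to $u_{K+1}$ without a further turn (e.g.\ if $u_{K+1}$ is a node at that ratio), or turn back before reaching $\sigma$. So your interval $(\beta,\alpha_n(K))$ may be empty, and the intermediate value theorem has nothing to act on.
\item More seriously, ``either failure forces $\Delta$ to change sign'' is unfounded. If continuation stops because the orbit is captured by $u_{K+1}$ before $\sigma$, no sign change is forced.
\item If it stops at a connection to another saddle, or to $u_K$ again with $n$ turns, that connection is not the $(n+1)$-oscillation connection you need. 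This contradicts your last sentence.
\end{itemize}
Your argument never rules out that, for some intermediate $\alpha\in(0,\alpha_n(K))$, the orbit fails to pass an earlier target.

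\paragraph{How the paper closes it.} The paper proves Lemmas~\ref{lem:bowtie1}--\ref{lem:bowtie3} and Theorem~\ref{thm:bowtie1} jointly, by induction along $(0,2),\ldots,(0,K),(1,K),\ldots$. The inductive information is that every $0<\alpha<\beta_{r-1}=\alpha_n(K)$ passes the preceding target. The passing set $P_r$ for $(n+1,K)$ is then open, contains an interval adjacent to $0$, and misses an interval below $\alpha_n(K)$. Capture by the single intervening sink $u_{K+1}$ is also open (finite-time entry into an attracting neighbourhood). Hence a boundary point of $P_r$ can only be a connection to $u_K$ after $n+1$ turns. The signed formula \eqref{eq:shooting-transversality} makes every such boundary point a right endpoint, so $P_r=(0,\beta_r)$, which propagates the induction.

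For a stand-alone proof of the lemma as stated, replace your $\Delta$-continuation with this connectedness argument. Let $\alpha^\dagger>0$ be the infimum of the maximal interval below $\alpha_n(K)$ of parameters that pass $(n,K)$ and are then captured by $u_{K+1}$. Openness of capture and of passing excludes both behaviours at $\alpha^\dagger$. The orientation from \eqref{eq:shooting-transversality} excludes a connection to $(n,K)$ or to an earlier target, since parameters just above would then fail to pass it. Hence $\alpha^\dagger$ is the desired $\alpha_{n+1}(K)$.
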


We arrive at one of our main conclusions. 

\begin{theorem}   \label{thm:bowtie1}
Consider a generic flux $f$, a quadratic entropy $U$,  the speed $a$ satisfying the prescribed speed assumptions~\eqref{hyp:speed-assumptions}, and that $C^3_{0,0}$ contains only an elementary bowtie.
All trajectories in this statement are the left outgoing branch $C^3_{0,\alpha}$. 
For every number $n$ of oscillations and every even $k \in \{2, 4, \ldots, K \}$, there exists
a unique diffusion-dispersion ratio $\alpha_n(k)$ for which there is an $n$-oscillation 
trajectory connecting $u_0$ to $u_k$. Furthermore, one has: 
\begin{itemize}
\item[1.] if $\alpha > \alpha_0(2)$, then this branch converges to $u_1$;
              
\item[2.] if $n$ is even and $\alpha \in] \alpha_{n+1}(K),  \alpha_n(K) [$ then this branch converge to $u_{K+1}$, while if $n$ is odd 
           and $\alpha \in ] \alpha_{n+1}(2),  \alpha_n(2)[$
              then this branch approach $u_1$;              
              
\item[3.] if $n$ is even and $\alpha \in ]\alpha_n(k+2), \alpha_n(k)[$, then this branch converges to $u_{k+1}$;
              if $n$ is odd and $\alpha \in ]\alpha_n(k), \alpha_n(k+2)[$, then this branch converges to $u_{k+1}$;  
              
\item[4.] the diffusion-dispersion ratios $\alpha_n(k)$ are unique;

\item[5.] for any fixed $k$, the sequence of diffusion-dispersion ratios $\alpha_n(k)$ tend to zero as $n \to \infty$.
\end{itemize}
\end{theorem}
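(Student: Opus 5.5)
We plan a continuation argument in $\alpha$, decreasing from $+\infty$ to $0$, that tracks the left outgoing branch $C^3_{0,\alpha}$ through the elementary bowtie $RD\cdots DL$. The ingredients are the attraction of bubbles (the corollary to the monotonicity lemma), the ordering of branches in $\alpha$ (Lemmas~\ref{lem:monotonicity} and \ref{lem:convexity}), the transversal crossing and continuity results (Lemmas~\ref{lem:intersectionveq0} and \ref{lem:intersectionueq0}), and the extension lemma with its signed splitting \eqref{eq:shooting-transversality}.

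\textbf{Large $\alpha$ and the first saddle.} For $\alpha$ large, $\overline{\mu}_0\approx(f'(u_0)-a)/\alpha$ is small. The branch $C^3_{0,\alpha}$ therefore descends slowly, stays in $v<0$, and enters the bubble $C^3_{2,0}\cup C^2_{2,0}$ region near $u_1$. Since $u_1$ is stable and an absorbing bubble surrounds it, the branch converges to $u_1$. By Lemma~\ref{lem:monotonicity}(i), $C^3_{0,\alpha}$ is monotone in $\alpha$ on its first descending arc. Lemma~\ref{lem:convergence2saddle} gives convergence for every $\alpha>0$, and at $\alpha=0$ the branch is the outer loop $C^3_{0,0}$, which passes to the left of $u_2$.

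Define $\alpha_0(2)$ as the infimum of the open set of $\alpha$ for which the first descending arc reaches $v=0$ in $(u_2,u_1)$ (Corollary~\ref{cor:convexity}). At $\alpha_0(2)$ the arc can neither cross in $(u_2,u_1)$, by openness (Lemma~\ref{lem:intersectionveq0}), nor pass $u_2$, by the ordering. It must therefore be the connection to $u_2$ through $Q^4_2$. This proves Lemma~\ref{lem:bowtie1} and item~1.

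\textbf{Induction through the bowtie.} Below $\alpha_0(2)$, the extension lemma sends the branch along the outer extension $C^3_{2,\alpha}$. In the dual-constrained configuration, $C^3_{2,0}=C^4_{4,0}$, so for small perturbations the branch descends toward $u_4$ and turns in $(u_4,u_3)$, converging to $u_3$. Repeating the infimum argument on the interval of $\alpha$ where the first turn lies in $(u_{k+2},u_{k+1})$ produces $\alpha_0(k+2)<\alpha_0(k)$; this is Lemma~\ref{lem:bowtie2} for even $n$. At $u_K$ the configuration is right-constrained: the bubble $C^3_{K,0}$ is homoclinic and contains only $u_{K+1}$. The branch passing just left of $u_K$ therefore follows $C^3_{K,\alpha}$, turns in $(u^*,u_{K+1})$, and converges to $u_{K+1}$ (item~2). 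Decreasing $\alpha$ further, the rising arc now moves rightward, with the comparison ordering supplied by Lemma~\ref{lem:monotonicity}(iii) on $(u^*,u_{K+1})$. It eventually reaches $u_K$ from $Q^2_K$, which gives $\alpha_1(K)$ and Lemma~\ref{lem:bowtie3}. The odd case runs symmetrically: the branch moves right through the saddles $u_{K-2},\dots,u_2$, and the roles of left and right are exchanged at the $L$ end. This yields items~2 and~3 by induction on $(n,k)$ in lexicographic order.

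\textbf{Uniqueness, accumulation, and the main obstacle.} For uniqueness (item~4), every connection $\alpha_n(k)$ is transverse with a definite splitting sign, by \eqref{eq:shooting-transversality}. Within the interval where the branch has exactly $n$ turns before the relevant saddle, the turning point moves strictly monotonically in $\alpha$ (Lemmas~\ref{lem:monotonicity} and \ref{lem:convexity}), so each saddle is met exactly once. For item~5, observe that the $\alpha_n(k)$ decrease in $n$ for fixed $k$ and are bounded below, so they have a limit $\bar\alpha\ge0$. If $\bar\alpha>0$, the branch $C^3_{0,\bar\alpha}$ would have infinitely many oscillations. But it converges to an equilibrium, either a stable point or, by transversality, a hyperbolic saddle, and it makes only finitely many large turns before entering an inner bubble. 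Continuous dependence on compact time intervals then contradicts the existence of connections with arbitrarily many oscillations nearby. Hence $\bar\alpha=0$.

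The hardest step is making the induction rigorous at each passage: we must show that the relevant set of $\alpha$ is a nonempty open interval and that its endpoint is exactly a saddle connection rather than a tangential degeneration. Our approach is to use the energy $H$: because the dissipated energy $\alpha\int v^2$ is continuous in $\alpha$, the branch for small $\alpha$ stays close to the zero-diffusion outer loop. This supplies nonemptiness, and the convexity lemma excludes simultaneous zeros at the endpoint.
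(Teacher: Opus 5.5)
Your ordering argument differs from the paper's and works. Chaining Lemma~\ref{lem:monotonicity}(i) on the first descent with part~(iii) at every later turn, on $(u^*,u_{K+1})$ and on $(u_1,u_0)$ with $k=0$, together with Lemma~\ref{lem:convexity}, makes each passing set a down-set in $\alpha$. This chaining is legitimate in the elementary bowtie because uncaptured turns occur only in the two outer intervals. The paper instead shows that every boundary point of the open passing set $P_r$ is a connection. It then uses the sign in \eqref{eq:shooting-transversality} to make each boundary point a right endpoint, which forces $P_r=(0,\beta_r)$. Your compactness argument for item~5 is also valid: a limit $\bar\alpha>0$ would pass every target, so it would cross $(u_2,u_1)$ infinitely often, contradicting convergence to an equilibrium. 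It gives no rate, whereas the paper obtains $\alpha_n(k)\leq h/((n-1)A)$.

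The genuine gap is the step you flag: nonemptiness of each passing set, i.e.\ that small $\alpha$ realizes any prescribed finite itinerary. Without it, $\alpha_n(k)$ is not shown to exist for $n\geq2$, and your justification fails on two counts.
\begin{itemize}
\item The total dissipation $\alpha\int v^2\,dy$ along $C^3_{0,\alpha}$ equals $G$ at the limiting equilibrium. It is therefore piecewise constant and jumps at every critical ratio, so it is not continuous in $\alpha$.
\item Closeness to the outer loop cannot be propagated by continuous dependence, because $C^3_{0,0}=C^2_{0,0}$ is homoclinic to $u_0$. Finite-time continuity yields only the targets up to $(1,2)$. After that the orbit lingers near the saddle $u_0$ for a time that diverges as $\alpha\to0$.
\end{itemize}

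The missing idea is a time-independent loss bound per monotone segment: $\int v^2\,dy=\int|v|\,|du|\leq ML$, with $M=\sqrt{2\max_{[u^*,u_0]}G}$ and $L=u_0-u^*$. This holds also for a segment ending asymptotically at an equilibrium. Combine it with the common saddle height $h$. While $-h<H<0$, a turn at $\bar u$ has $H=-G(\bar u)$, so it can occur only in the outer intervals. Reaching any internal equilibrium would cost at least $h$. Hence $\alpha NML<h$ forces the first $N$ segments to pass every saddle with nonzero velocity.

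Some smaller points also need repair.
\begin{itemize}
\item ``Descends slowly'' for large $\alpha$ needs proof. One option is the invariant strip $|v|\leq\max|G'|/\alpha$, which gives $H<-h$ at $u=u_1$ and hence capture.
\item The bubble around $u_1$ is bounded by $C^1_{2,0}=C^4_{2,0}$, not by $C^3_{2,0}\cup C^2_{2,0}$.
\item Define $\alpha_0(2)$ as the supremum of the parameters passing $u_2$, not as an infimum of the turning set. For large $\alpha$ the node $u_1$ may be approached monotonically, so you have not shown that set is nonempty.
\item Just below $\alpha_0(2)$ the branch follows $C^3_{2,\alpha_0(2)}$, which lies strictly inside the $D$ bubble, not $C^3_{2,0}$.
\end{itemize}
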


\begin{proof}
We prove the theorem and the preceding three lemmas by a single continuation argument. 
Since the internal saddles belong to one elementary bowtie, their heights are equal:
\be
\label{eq:common-saddle-height}
 h=G(u_2)=G(u_4)=\cdots=G(u_K)>0.
\ee
On $[u_K,u_2]$, $G\geq h$, with strict inequality away from the saddles. On the left and right outer intervals there is one stable equilibrium each. While $-h<H<0$, a trajectory can turn only to the left of $u_{K+1}$ or to the right of $u_1$: a turn in an internal bubble would have $H=-G(u)\leq-h$. Once $H<-h$, the trajectory lies inside an elementary bubble and converges to its unique stable equilibrium by absorption.

First fix a finite number $N$ of monotone segments. Put $L=u_0-u^*$ and $M=\sqrt{2\max_{[u^*,u_0]}G}$. On every such segment, including a segment ending asymptotically at an equilibrium,
\be
\label{eq:monotone-segment-loss}
 \int v^2\,dy=\int |v|\,|du|\leq ML.
\ee
Consequently, for $\alpha NML<h$ the first $N$ segments cannot end at an internal equilibrium: their total energy loss is less than $h$, whereas every such equilibrium has $G\geq h$. Confinement and convergence exclude any other termination. Thus, for each fixed finite itinerary along the outer region, sufficiently small positive $\alpha$ produces all its turns and passes all its saddle sections with nonzero velocity.

We enumerate the successive target pairs by
\be
\label{eq:successive-target-order}
 (0,2),(0,4),\ldots,(0,K),(1,K),(1,K-2),\ldots,(1,2),(2,2),\ldots.
\ee
Passing a target $(n,k)$ means crossing $u=u_k$ with nonzero velocity on the segment after exactly $n$ turns, before capture in an elementary bubble. At such a crossing $H=v^2/2-h>-h$. The small-parameter argument gives an interval of passing parameters adjacent to zero for every fixed target.

For the first target, passing fails for sufficiently large $\alpha$. Indeed, the incoming stable branch of $u_2$, denoted $v^s_\alpha(u)<0$, extends backwards to $u_1$: on $(u_2,u_1)$ the equation $v\,v_u=-\alpha v+G'(u)$ has $G'(u)>0$, so this branch cannot vanish there. It cannot become unbounded at finite $u$, since $v_u=-\alpha+G'(u)/v$ is bounded once $v$ stays away from zero. Integrating its equation in time gives
\be
\label{eq:alpha0bound}
 v^s_\alpha(u_1)<-\alpha(u_1-u_2),\qquad |v^u_\alpha(u_1)|\leq M
\ee
whenever the outgoing branch reaches $u_1$. If it approaches $u_1$ asymptotically, passing already fails. For $\alpha>M/(u_1-u_2)$ the outgoing branch lies above the incoming stable branch at $u_1$ and cannot cross it; it therefore cannot pass $u_2$.

We now give the induction step, which also applies to this first target with no previous finite upper endpoint. Suppose all earlier targets have been constructed, the last at $\beta_{r-1}$, and every $0<\alpha<\beta_{r-1}$ passes the preceding target. Between that target and the next there is exactly one elementary bubble: a dual-constrained bubble, or an outer bubble if the next target requires an additional turn. The extension lemma shows that for $\alpha<\beta_{r-1}$ sufficiently close to $\beta_{r-1}$ the trajectory enters this bubble and is captured by its unique sink. Hence it fails to pass the next target.

Let $P_r\subset(0,\beta_{r-1})$ be the parameters passing that target. It is open by transversality of a finite crossing, contains an interval adjacent to zero, and omits an interval below $\beta_{r-1}$. Every boundary point of $P_r$ in this parameter interval is a connection to the target saddle. To see this, the trajectory there has already passed the previous target. Before passing the next one, its only other possible terminal equilibrium is the unique intervening sink. Capture by that sink occurs in a finite-time attracting neighborhood and is open in $\alpha$, so it cannot occur at a boundary of $P_r$. Passing the target with nonzero velocity is also open. Convergence therefore leaves precisely a connection to the target. In an outer bubble the same argument includes the one required turn; a turn in an internal bubble would already give $H<-h$ and irreversible capture.

At each such boundary connection, the signed formula~\eqref{eq:shooting-transversality} places passing parameters immediately below the critical ratio and nonpassing parameters immediately above it. Thus every boundary point is a right endpoint of a component of $P_r$. An additional component would have a positive left endpoint, with the opposite orientation, which is impossible. Therefore
\be
\label{eq:passing-parameter-interval}
 P_r=(0,\beta_r),\qquad 0<\beta_r<\beta_{r-1},
\ee
and $\beta_r$ is the unique connection to this target in the remaining parameter range. For the initial target the large-parameter bound replaces the omitted interval below $\beta_{r-1}$ and gives the same conclusion. Every parameter between consecutive critical ratios is captured by the unique intervening sink. This proves the three preceding lemmas and assertions 1--4 of the theorem, with $\beta_r=\alpha_n(k)$ in the displayed order. It also excludes additional connections in the ranges already classified.

Finally, choose a closed interval $J\subset(u_2,u_1)$ and put
\be
\label{eq:turn-loss-bound}
 A=\int_J\sqrt{2(G(u)-h)}\,du>0.
\ee
On a connection with $n$ turns, every finite point has $H>-h$. Between consecutive outer turns the trajectory crosses $J$ and satisfies $|v|>\sqrt{2(G-h)}$. The total energy identity therefore gives, for $n\geq2$,
\be
\label{eq:critical-turn-loss}
 h=\alpha_n(k)\int_{-\infty}^\infty v^2\,dy
 \geq\alpha_n(k)(n-1)A.
\ee
It follows that $\alpha_n(k)\leq h/((n-1)A)\to0$, uniformly in the finitely many targets $k$. The consecutive parameter intervals consequently exhaust $(0,\infty)$, completing the classification and the proof.
\end{proof}

The previous theorem is essentially a generalization of Theorem 4.1 from \cite{BedChaCoqLeF}, except in their 
case, the elementary bowtie was simply an $RL$ while we have extended the result to include an arbitrary number of 
intermediate saddle equilibrium point.

The previous theorem might give the reader the impression that it is always possible to construct an
$n$-oscillation trajectory from $u_0$ to $u_{k+2}$ when it is known that an
$n$-oscillation trajectory from $u_0$ to $u_k$ exists. Unfortunately, this is not the
case as the following two examples illustrate. In the following examples, we will be using 
the notation $\overline{\alpha}_n(j,k)$ to denote a 
diffusion-dispersion ratio for which there is an $n$-oscillation trajectory connecting 
$u_j$ to $u_k$.

The next two examples are numerical illustrations. Their nonexistence conclusions use the strict threshold inequalities indicated by the computations; rigorous numerical enclosures of these thresholds have not been supplied.

\begin{figure}[!htb] 
\centering
\begin{minipage}{.5\textwidth}
       \centering  \includegraphics[width=\linewidth]{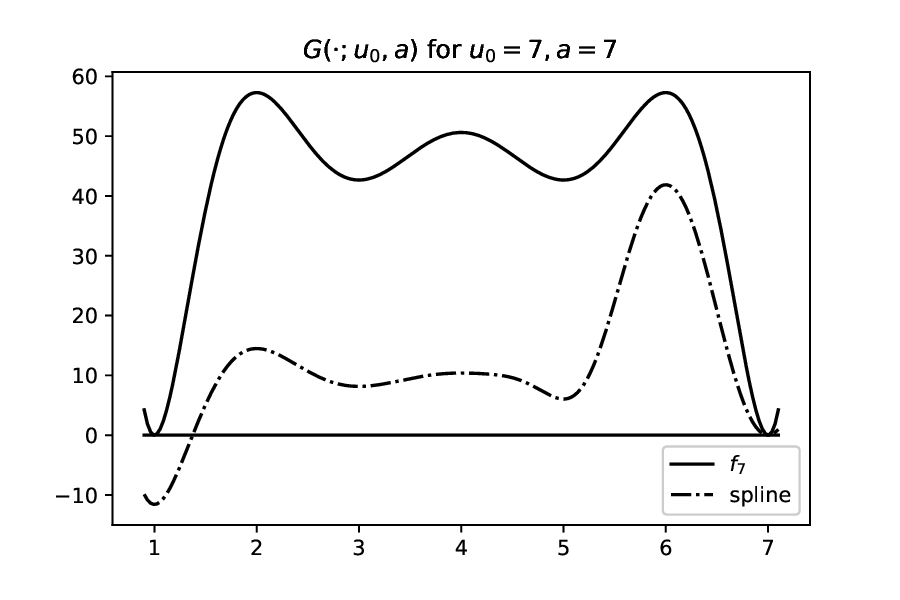}
\end{minipage}%
\begin{minipage}{.5\textwidth}
       \centering  \includegraphics[width=\linewidth]{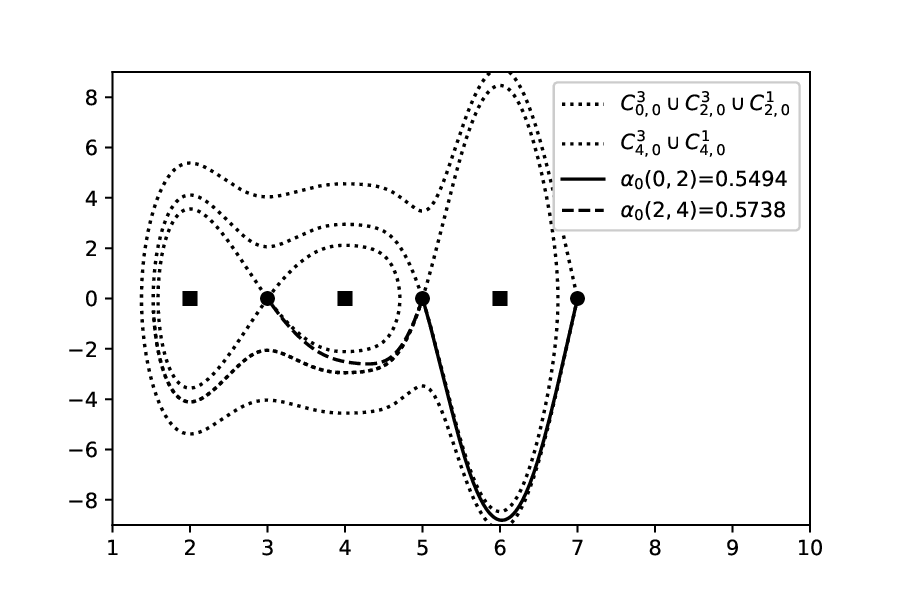}
\end{minipage} 
\caption{For the spline described in Example \ref{ex:fluxes2}, the function $G(\cdot \, ; 7,7)$ is seen on the left
and the graph of some of its trajectories are presented on the right. All the zero diffusion trajectories
originating from one of the saddle equilibrium are seen as dotted lines. There exists
a trajectory connecting $u_0$ to $u_2$ for some value $\alpha_0(2)=\overline{\alpha}_0(0,2)$
while the trajectory connecting $u_2$ to $u_4$ exists for a larger value of $\overline{\alpha}_0(2,4)$. 
The numerical ordering indicates that there is no $0$-oscillation trajectory connecting $u_0$ to $u_4$.}
\label{fig:counter1}
\end{figure}

\begin{example}   \label{ex:counter1}
The following simple example shows that even if we can find an $n$-oscillation trajectory connecting 
$u_0$ to some saddle equilibria $u_k$, it does not imply that we find an $n$-oscillation
trajectory connecting $u_0$ to $u_{k+2}$.  We consider
the flux described by the cubic spline of Example \ref{ex:fluxes2}. 
In Figure \ref{fig:counter1}, we present both the graph of its function $G$, for the entropy $U(u) = u^2/2$ and $u_0=7, a=7$,
and some of the trajectories to the system \eqref{Agsystemb}. Numerical computations
show that $\alpha_0(2) \approx 0.54935$ but $\overline{\alpha}_0(2,4) \approx 0.57380$. Assuming the strict ordering indicated by these values, the branch $C^3_{2,\alpha_0(2)}$ passes $u_4$ with $v<0$, since $\alpha_0(2)<\overline\alpha_0(2,4)$. For $0<\alpha<\alpha_0(2)$, the first negative arc of $C^3_{0,\alpha}$ lies below this branch and also passes $u_4$. For $\alpha>\alpha_0(2)$, it is captured in the right bubble before reaching $u_2$. At equality it ends at $u_2$. Thus the numerical thresholds indicate the absence of a $0$-oscillation connection from $u_0$ to $u_4$.

The geometry of this example contains a bowtie $RL$ connected to $u_4$ within 
a larger bowtie $RL$ connected to $u_2$. In this sense, the result is entirely
consistent with Theorem \ref{thm:generalbowtie}. 
\end{example}

\begin{figure}[!htb] 
       \centering  \includegraphics[width=\linewidth]{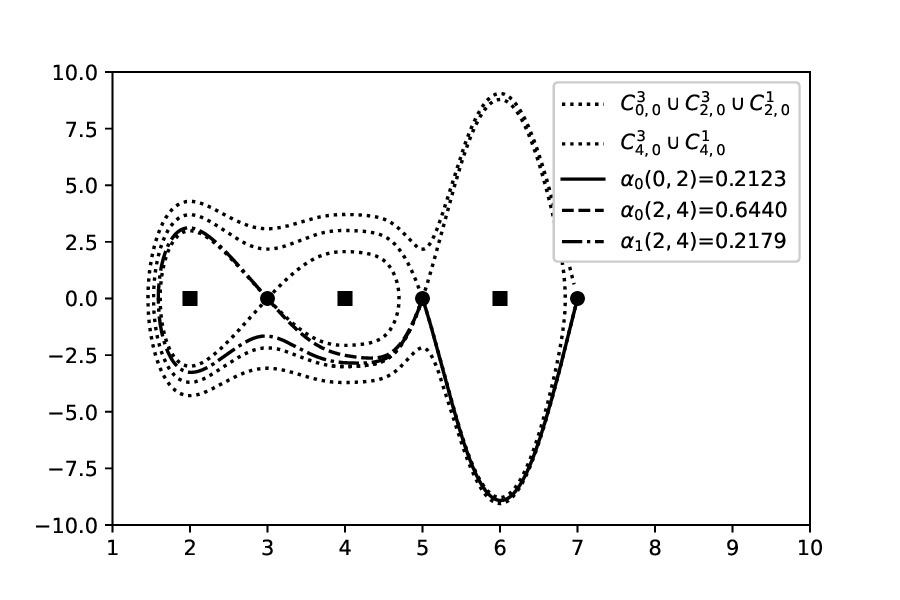}
\caption{For the spline described in Example \ref{ex:counter2}, we present some of its
trajectories in phase space. All the zero diffusion trajectories
originating from one of the saddle equilibrium are seen as dotted lines.  The figure
shows that $\alpha_0(2)$ exists, as do the $0$ and $1$ oscillation trajectories connecting $u_2$ to $u_4$,
but because $\alpha_0(2)$ is smaller than both $\overline{\alpha}_0(2,4)$ and $\overline{\alpha}_1(2,4)$,
the numerical ordering indicates no $1$-oscillation trajectory connecting $u_0$ to $u_4$.} 
\label{fig:counter2}
\end{figure}

\begin{table}[ht!]
\centering
\caption{Values determining the cubic spline of Example \ref{ex:counter2}.}
\label{tab:spline2}
\begin{tabular}{|c||c|c|c|c|c|c|}
\hline
$u$       & 1.5  &  2.5  &  3.5  &  4.5  &  5.5  &  6.5 \\ \hline \hline
$f(u)$    & 45   &  13   &  30   &   29  & 105  & -18 \\ \hline
\end{tabular}
\end{table}

\begin{example}   \label{ex:counter2}
This example is a slight extension of the previous result that shows that it may not be possible to 
reach $u_4$ even after allowing an oscillation, or in other words, oscillations don't
make it any easier to connect to the next saddle equilibrium.
We consider a perturbation of the flux described by a cubic spline in Example \ref{ex:fluxes2}
but where we now use the interpolated values from Table \ref{tab:spline2}. The values
of the first derivative at $u=1$ and $u=7$ are unchanged from Example \ref{ex:fluxes2}. 
The graph  of $G$, for the entropy $U(u) = u^2/2$ and $u_0=7, a=7$, is almost identical to the graph 
of $G$ seen in Figure \ref{fig:counter1}, and is therefore omitted. In Figure \ref{fig:counter2}, we present 
some of the trajectories in phase space to the system \eqref{Agsystemb}. Numerical computations
show that $\alpha_0(2)  \approx 0.21231$ but $\overline{\alpha}_0(2,4) \approx 0.64404$
and $\overline{\alpha}_1(2,4) \approx 0.21787$. For the strict ordering indicated by these values, 
$C^3_{2,\alpha_0(2)}$ first passes $u_4$ with $v<0$ and, after its left turn, returns past $u_4$ with $v>0$, 
because $\alpha_0(2)$ is below both internal thresholds. For $0<\alpha<\alpha_0(2)$, comparison on the 
negative arc puts the first turn of $C^3_{0,\alpha}$ farther to the left; the convexity lemma excludes a 
common turn. Comparison of the returning positive arcs then puts it above the returning arc from $u_2$. 
It also passes $u_4$ with $v>0$. For $\alpha>\alpha_0(2)$ it remains in the right bubble, and at equality 
it ends at $u_2$. The computations therefore indicate the absence of a $1$-oscillation connection from $u_0$ to $u_4$.
\end{example}

The next step is to give a general description of all solutions to  \eqref{Agsystemb} that converge to
a saddle equilibrium as a function of the diffusion-dispersion ratio. Our earlier analysis of the nestings of bubbles 
has shown that all configurations
$$
     RP_{K-1} \cdots P_3L \longrightarrow \cdots \longrightarrow R D \cdots D L
$$
can be reduced to a single outer bowtie that contains possible nestings of bowties inside 
of each of its bubbles. Each step in the reduction is the identification of a single bubble in which an isolated 
bowtie can be found.

In a nested configuration, a trajectory entering a bubble remains there. A connection to a saddle on its boundary gives, through the extension lemma, a one-sided open interval of trajectories entering that bubble. The following three lemmas establish this local entry property. The parameter classification in Theorem~\ref{thm:generalbowtie} then applies on every open interval where entry has been established.

\begin{lemma}    \label{lem:generalRbubble}
Suppose $R$ is the bubble bounded by $C^3_{k,0}=C^2_{k,0}$, and $C^3_{0,\alpha_0}$ connects $u_0$ to $u_k$ after an even number of turns, with $\alpha_0>0$. There exists $\delta>0$ such that, for $\alpha\in(\alpha_0-\delta,\alpha_0)$, the left outgoing trajectory from $u_0$ enters $R$ and converges to an equilibrium in its interior.
\end{lemma}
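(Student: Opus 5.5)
The plan combines the extension lemma (Lemma~\ref{lem:extendedtrajectory}) with the absorption corollary for bubbles. First I identify the outer extension. By Definition~\ref{defn:extendedtrajectory}, a connection with an even number $n$ of turns arrives at $u_k$ through $Q^4_k$, from the right with $v<0$. The outer extension is then $\overline{C}^3_{0,\alpha_0}=C^3_{k,\alpha_0}$, the left outgoing branch of $u_k$ at the same ratio $\alpha_0$. The second conclusion of the extension lemma states the following. For any fixed time $y$ on this branch and any $\epsilon>0$, there is $\delta>0$ such that for $\alpha\in(\alpha_0-\delta,\alpha_0)$ the trajectory $C^3_{0,\alpha}$ passes within $\epsilon$ of $\overline U(y)$. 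This is the only place where the sign of the parameter perturbation enters. It comes from the signed splitting formula~\eqref{eq:shooting-transversality}, which puts the incoming orbit below the stable curve when $\alpha<\alpha_0$.

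Next I show that the branch $C^3_{k,\alpha_0}$ itself enters the interior of $R$ at some finite time. The monotonicity lemma, case (i) with $i=3$ and the pair $0<\alpha_0$, compares $C^3_{k,\alpha_0}$ with $C^3_{k,0}$. As long as both lie below the $u$-axis, $C^3_{k,0}$ stays below $C^3_{k,\alpha_0}$, so $C^3_{k,\alpha_0}$ lies strictly between the axis and the lower arc of the bubble. An equivalent and cleaner route uses energy. The identity $dH/dy=-\alpha_0 v^2$ holds with $H(u_k,0)=-G(u_k)$, and the boundary of $R$ is the level $H=-G(u_k)$. Since $v\neq0$ on the branch near $u_k$, we get $H<-G(u_k)$ after any positive time. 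The point $\overline U(y_1)$ therefore lies strictly inside $R$, at a positive distance $d$ from $\partial R$, for any fixed $y_1$. I still have to confirm that the branch lies on the $R$-side of $u_k$ and not in another bubble sharing the level. This holds because $C^3_{k,0}$ bounds $R$ and leaves $u_k$ in $Q^3_k$, and the $\alpha_0$-branch starts tangent to a nearby eigenvector in the same quadrant, inside the loop.

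Then I take $\epsilon<d$ in the extension lemma. For $\alpha\in(\alpha_0-\delta,\alpha_0)$, the trajectory $C^3_{0,\alpha}$ reaches a point of the open set $\{H<-G(u_k)\}\cap R$. Since $R$ is the component of this sublevel set enclosed by the loop, the absorption corollary (the energy argument in its proof) shows that the trajectory stays in $R$ afterwards. The convergence part of Lemma~\ref{lem:convergence2saddle} then gives convergence to an equilibrium, which is interior because the limiting energy is strictly below the boundary level.

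The main obstacle is the second step: placing the branch $C^3_{k,\alpha_0}$ strictly inside $R$ rather than merely nearby. The energy strict-decrease argument handles this. I must also check that the point obtained through the extension lemma lies in the component of the sublevel set bounded by $R$, not in an adjacent bubble. Choosing $\epsilon$ smaller than both $d$ and the distance from $\overline U(y_1)$ to every other bubble resolves this.
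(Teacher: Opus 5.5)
Your proposal is correct and follows the paper's proof essentially step for step. You identify the outer extension $C^3_{k,\alpha_0}$, place one of its points strictly inside $R$ by the strict decrease of $H$ below the boundary level $-G(u_k)$, transfer that point to $C^3_{0,\alpha}$ for $\alpha$ slightly below $\alpha_0$ via Lemma~\ref{lem:extendedtrajectory}, and conclude with the absorbing-region argument. Your check that the branch lies in the component of $\{H<-G(u_k)\}$ bounded by $R$ is a detail the paper leaves implicit; your extra condition on the distance to other bubbles is redundant once $\epsilon<d$, but harmless.
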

\begin{proof}
The outer extension $C^3_{k,\alpha_0}$ enters $R$: its energy decreases strictly from the boundary level $-G(u_k)$. Choose a point of this extension strictly inside $R$. Lemma~\ref{lem:extendedtrajectory} places a point of $C^3_{0,\alpha}$ inside $R$ for every $\alpha<\alpha_0$ sufficiently close to $\alpha_0$. The absorbing-region corollary then gives convergence.
\end{proof}

\begin{lemma}     \label{lem:generalLbubble}
Suppose $L$ is the bubble bounded by $C^1_{k,0}=C^4_{k,0}$, and $C^3_{0,\alpha_0}$ connects $u_0$ to $u_k$ after an odd number of turns, with $\alpha_0>0$. There exists $\delta>0$ such that, for $\alpha\in(\alpha_0-\delta,\alpha_0)$, the left outgoing trajectory from $u_0$ enters $L$ and converges to an equilibrium in its interior.
\end{lemma}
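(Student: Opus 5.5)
The argument mirrors the proof of Lemma~\ref{lem:generalRbubble}, with the roles of the quadrants reflected. The first step is to identify which extension of the connection enters $L$. The connection reaches $u_k$ after an odd number $n$ of turns, so by Definition~\ref{defn:extendedtrajectory} it arrives through $Q^2_k$. Then $\underline{\ell}=3$ and $\overline{\ell}=1$, and the outer extension is $\overline{C}^3_{0,\alpha_0}=C^1_{k,\alpha_0}$. The extension lemma (Lemma~\ref{lem:extendedtrajectory}) places this outer extension on the side $\alpha<\alpha_0$. This is the same orientation as in the $R$ case, which is why the stated interval is again $(\alpha_0-\delta,\alpha_0)$. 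I will check this sign against the splitting formula~\eqref{eq:shooting-transversality}. For odd $n$ we have $v_0>0$, hence $\Delta'(\alpha_0)<0$, and the proof of the extension lemma states explicitly that decreasing $\alpha$ then sends the orbit to $C^1_{k,\alpha}$.

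The next step is to show that $C^1_{k,\alpha_0}$ actually enters $L$, where $L$ is the bubble bounded by $C^1_{k,0}=C^4_{k,0}$ with energy level $H=-G(u_k)$. For this I will use the monotonicity lemma (Lemma~\ref{lem:monotonicity}(i), with $i=1$): as long as both curves stay above the $u$-axis, $C^1_{k,0}$ remains strictly above $C^1_{k,\alpha_0}$. Strictness near $u_k$ comes from $\partial\overline{\mu}_k/\partial\alpha<0$ in Lemma~\ref{valpropres}. The energy argument is simpler and suffices on its own. By~\eqref{eq:lyapunov-correction}, $H$ decreases strictly along $C^1_{k,\alpha_0}$ from its initial value $-G(u_k)$, because $v\neq0$ immediately after leaving the saddle. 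So for any small $y$ the point $C^1_{k,\alpha_0}(y)$ has $H<-G(u_k)$. It also lies near $u_k$ on the side $u>u_k$, $v>0$, which puts it in the interior of $L$ rather than in the neighbouring region on the other side of the saddle. I then fix one such point $P$ and a radius $\epsilon>0$ such that the ball of radius $\epsilon$ about $P$ lies inside $L$.

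Finally, I apply the second conclusion of Lemma~\ref{lem:extendedtrajectory} with this $\epsilon$ and the time corresponding to $P$. This gives $\delta>0$ such that for every $\alpha\in(\alpha_0-\delta,\alpha_0)$ some point of $C^3_{0,\alpha}$ lies within $\epsilon$ of $P$, and hence inside $L$. The absorbing-region corollary after Lemma~\ref{lem:monotonicity}, whose proof only uses that the interior has $H<-G(u_k)$ and that $H$ is nonincreasing, then shows that the trajectory remains in $L$. Boundedness and Lemma~\ref{lem:convergence2saddle} give convergence to an equilibrium. Its limiting energy is strictly below the boundary level, so the limit is interior. The main obstacle is the orientation bookkeeping in the first step, namely confirming that for odd $n$ the outer extension is $C^1_{k}$ and that it corresponds to $\alpha<\alpha_0$ rather than $\alpha>\alpha_0$. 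The rest is routine.
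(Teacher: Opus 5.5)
Your proposal is correct and follows the paper's proof. You identify the outer extension for an odd number of turns as $C^1_{k,\alpha_0}$, use strict energy dissipation to see that it enters the interior of $L$, and use Lemma~\ref{lem:extendedtrajectory} to transfer a point inside $L$ to $C^3_{0,\alpha}$ for $\alpha<\alpha_0$ near $\alpha_0$, concluding by absorption. Your explicit checks of the sign of $\Delta'(\alpha_0)$ and of the local sector $u>u_k$ near the saddle only spell out what the paper's two-line proof leaves implicit.
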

\begin{proof}
The outer extension is now $C^1_{k,\alpha_0}$. It enters the interior of $L$ by strict energy dissipation, and the same application of Lemma~\ref{lem:extendedtrajectory} proves the assertion.
\end{proof}

\begin{lemma}     \label{lem:generalDbubble}
Suppose $D$ is the bubble bounded by the two zero diffusion connections between saddles $u_k<u_j$. If $C^3_{0,\alpha_0}$ reaches $u_j$ after an even number of turns, or $u_k$ after an odd number, with $\alpha_0>0$, then for every $\alpha<\alpha_0$ sufficiently close to $\alpha_0$ the left outgoing trajectory from $u_0$ enters $D$ and converges to an equilibrium in its interior.
\end{lemma}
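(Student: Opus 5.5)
The plan is to follow the two preceding lemmas, with one difference: here the connecting orbit arrives at a saddle on the boundary of a bubble bounded by two heteroclinic arcs, not by a homoclinic loop. The boundary of $D$ consists of the zero diffusion connections $C^3_{j,0}=C^4_{k,0}$ (lower arc) and $C^1_{k,0}=C^2_{j,0}$ (upper arc). All four arcs have the common energy level $H=-G(u_j)=-G(u_k)$. The interior of $D$ is the region $H<-G(u_j)$ between them, and by the bubble lemma it contains the stable equilibria strictly between $u_k$ and $u_j$.

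\emph{Case 1: even number of turns, arrival at $u_j$.} By Definition~\ref{defn:extendedtrajectory} the connection arrives through $Q^4_j$, and the outer extension is $\overline C^3_{0,\alpha_0}=C^3_{j,\alpha_0}$. First I check that this extension enters $D$. Near $u_j$, the monotonicity lemma (i) with $\alpha_1=0<\alpha_2=\alpha_0$ shows that $C^3_{j,\alpha_0}$ lies above $C^3_{j,0}$ as long as both remain below the axis. It also lies below the axis, so it sits between the lower arc of $D$ and the $u$-axis, which is inside $D$. Equivalently, along $C^3_{j,\alpha_0}$ we have $dH/dy=-\alpha_0v^2<0$ starting from the level $-G(u_j)$, so $H<-G(u_j)$ strictly at every finite time. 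One still has to confirm that the point lies in $D$ and not in some other region at the same energy level. The monotonicity comparison near $u_j$ settles this locally, since near the saddle the sublevel region on the left is exactly the interior of $D$.

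Next I fix a point $P=\overline U(y)$ of $C^3_{j,\alpha_0}$ in the interior of $D$, together with a ball of radius $\epsilon$ around $P$ contained in $D$. The second bullet of Lemma~\ref{lem:extendedtrajectory} then gives $\delta>0$ such that, for $\alpha\in(\alpha_0-\delta,\alpha_0)$, the branch $C^3_{0,\alpha}$ passes within $\epsilon$ of $P$ and hence enters $D$. The absorbing-region corollary then gives convergence to an equilibrium in the interior of $D$.

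\emph{Case 2: odd number of turns, arrival at $u_k$.} The connection arrives through $Q^2_k$, and the outer extension is $C^1_{k,\alpha_0}$. By the monotonicity lemma (i) with $i=1$, near $u_k$ it lies below $C^1_{k,0}$, which is the upper arc of $D$, and above the axis. Energy dissipation again puts it strictly inside $D$. The rest of the argument is the same as in Case 1.

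The main obstacle is the step showing that the outer extension enters the interior of $D$ specifically, and not merely a region of lower energy. I handle it with the local monotonicity comparison near the saddle, which I combine with the fact that $D$ is locally the sublevel set adjacent to $u_j$ (respectively $u_k$) on the correct side.
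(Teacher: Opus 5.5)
Your proposal is correct and follows the paper's proof. You identify the outer extension ($C^3_{j,\alpha_0}$, respectively $C^1_{k,\alpha_0}$), show that it enters $D$, and then apply the second bullet of Lemma~\ref{lem:extendedtrajectory} together with the absorbing-region corollary. Your appeal to the monotonicity lemma and to the local sector structure of $\{H<-G(u_j)\}$ at the saddle just makes explicit what the paper compresses into ``strict energy dissipation.''
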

\begin{proof}
In the first case the outer extension is $C^3_{j,\alpha_0}$, and in the second it is $C^1_{k,\alpha_0}$. Each enters $D$ by strict energy dissipation. Lemma~\ref{lem:extendedtrajectory} and absorption give the assertion.
\end{proof}

We take a moment to introduce some notation. Since the trajectories
essentially spiral clockwise, there is a natural order to the
potential trajectories. We say that $(n,k)$, formally representing the potential $n$-oscillation
trajectory connecting $u_0$ to $u_k$,  is {\bf before} $(n',k')$ if either
\begin{itemize}
\item $n<n'$, or
\item $n=n'$ is even and $k'> k$, or
\item $n=n'$ is odd and $k> k'$.
\end{itemize}
In practice, this gives the following order to these pairs of indices :
$$
   (0,2), (0,4), \ldots, (0,K), (1,K), (1,K-2), \ldots, (1,2), (2,2), (2,4), \ldots
$$
We use this convention in the statement of the following result.

For the general classification, we write $q=G'$ and recall $H=v^2/2-G(u)$. A bubble $B$ is a bounded connected component of $\{H<-g_B\}$, where $g_B$ is its boundary height. If $B$ contains saddles, we denote by $h_B$ the least value of $G$ at its interior saddles, and by $p_1>\cdots>p_m$ the saddles attaining this value. The $m+1$ connected components of $\{H<-h_B\}$ inside $B$ are its child bubbles, ordered from left to right.

\begin{theorem}   \label{thm:generalbowtie}
Fix the speed $a$ satisfying the prescribed speed assumptions~\eqref{hyp:speed-assumptions} and an open interval $I\subset(0,\infty)$ such that $C^3_{0,\alpha}$ enters $B$ for every $\alpha\in I$. If $B$ contains no saddle, every such trajectory converges to its unique stable equilibrium. Otherwise the following assertions hold.
\begin{enumerate}
\item The connections from $u_0$ to the saddles $p_i$ form a consecutive block in the order introduced above. More precisely, the index of a connection with $n$ turns is
\be
\label{eq:general-itinerary-index}
 j(n,i)=
 \begin{cases}
 mn+i-1,&n\ \hbox{even},\\
 m(n+1)-i,&n\ \hbox{odd}.
 \end{cases}
\ee
Each index is attained at most once in $I$, at a parameter $\beta_j$. The set of attained indices is an interval of nonnegative integers, possibly empty or infinite, and $j<k$ implies $\beta_k<\beta_j$. These critical parameters are locally finite in $I$.
\item Every component of $I$ minus these critical parameters is an open interval on which the trajectories enter a fixed child bubble. Between two consecutive attained indices it is $(\beta_{j+1},\beta_j)$; end intervals are truncated by $I$. Consecutive arrivals at distinct saddles delimit the intervening $D$ bubble. The transition from negative to positive arrival at $p_m$ gives the left $R$ bubble, and that from positive arrival at $p_1$ to the next negative arrival gives the right $L$ bubble.
\item Repeating this classification in the child bubbles gives an exhaustive partition for all $\alpha\in I$ after finitely many descents for each parameter. Each trajectory either reaches a saddle at a critical parameter or converges to the unique stable equilibrium of a bubble without saddles. The full set $\Sigma_B$ of parameters for convergence to any interior saddle of $B$ is relatively closed and at most countable. Its complement consists of open intervals with constant limiting stable equilibrium. Every saddle connection has finitely many turns and is a simple shooting zero. Its critical parameter is unique on each fixed succession of entry intervals with a fixed terminal index.
\end{enumerate}
\end{theorem}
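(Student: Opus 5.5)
The plan is to reproduce, inside an arbitrary bubble $B$, the continuation argument used for Theorem~\ref{thm:bowtie1}, with the height $h_B$ playing the role of the common saddle height $h$ in \eqref{eq:common-saddle-height}. Once the trajectory has entered $B$ (for $\alpha\in I$), Lemma~\ref{lem:convergence2saddle} and the absorbing-region corollary confine it to $\{H<-g_B\}$ and give convergence. There are two regimes. While $-h_B<H<-g_B$, the trajectory can turn only in the two outer annular regions of $B$: to the left of the leftmost child and to the right of the rightmost. A turn inside a child would force $H\le -h_B$. Once $H<-h_B$, the trajectory lies in a fixed child bubble and stays there.

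In the first regime, the itinerary is a clockwise rotation that passes the saddles $p_m<\cdots<p_1$ in order. With negative velocity it passes $p_1,\dots,p_m$ successively; it turns on the left; with positive velocity it passes $p_m,\dots,p_1$; it turns on the right; and so on. Counting the successive targets in this order gives the index formula \eqref{eq:general-itinerary-index}: on an even pass the $i$-th saddle from the right has index $mn+i-1$, and on an odd pass the order is reversed, giving $m(n+1)-i$.

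For each target index $j$, I will define $P_j\subset I$ as the set of parameters that pass target $j$ with nonzero velocity before capture in a child. The key steps are as follows.
\begin{itemize}
\item[(a)] $P_j$ is open, by Lemmas~\ref{lem:intersectionveq0}--\ref{lem:intersectionueq0}. Capture in a child is open as well, since it happens in an attracting neighbourhood reached in finite time.
\item[(b)] Any boundary point of $P_j$ in $I$ at which target $j-1$ is passed is a connection to the target saddle. At such a point neither capture nor passing occurs, and the only remaining limit is that saddle.
\item[(c)] By the signed splitting formula \eqref{eq:shooting-transversality}, passing parameters lie just below each such connection and captured ones just above. Every boundary point is therefore a right endpoint, and at most one can exist. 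This gives uniqueness, the order $\beta_k<\beta_j$ for $j<k$, and the fact that the attained indices form an initial interval.
\item[(d)] By the extension lemma together with Lemmas~\ref{lem:generalRbubble}--\ref{lem:generalDbubble}, parameters slightly below $\beta_j$ enter the child lying between target $j$ and target $j+1$. This is a $D$ child between distinct saddles, the left $R$ child at the negative-to-positive switch at $p_m$, and the right $L$ child at the switch at $p_1$. This gives assertion 2.
\end{itemize}

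Local finiteness of the $\beta_j$ comes from the energy bound \eqref{eq:critical-turn-loss}, adapted to $B$. I fix a closed interval $J$ in the outer annulus where $G>h_B$. Each completed pair of turns then costs at least $\alpha A$ of energy, while the total available loss is at most $G(p_i)-g_B$. Hence $\beta_j\le C/j$, and only finitely many $\beta_j$ lie above any positive $\alpha$.

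For assertion 3, I will induct on the depth of the nesting, which is finite because there are finitely many equilibria. On each entry interval of a child, the same argument applies with $B$ replaced by that child. A given parameter descends through at most as many levels as there are nested bubbles. At each level it either meets a critical parameter (a saddle connection) or lands in an open interval leading to a deeper child, and it ends in a saddle-free bubble whose unique sink attracts it. The set $\Sigma_B$ is a countable union, over finitely many levels and countably many entry intervals, of locally finite sets. Its closedness relative to $I$ follows because accumulation can occur only at the ends of entry intervals, and those ends are themselves critical parameters of the parent level or endpoints of $I$.

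I expect the main obstacle to be step (b) in the nested setting. Before reaching target $j$, the trajectory could in principle approach a saddle on the boundary of a child, or wander along the child boundaries. I would handle this by observing that reaching a child saddle requires $H\le -h_B$ along the way, and strict dissipation makes that level unattainable at a saddle except as a limit, which is exactly the connection being counted. Making this precise requires care when several saddles share the height $h_B$.
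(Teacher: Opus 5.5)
Your proposal is correct, but it takes a different route from the paper. You transplant the continuation argument from the proof of Theorem~\ref{thm:bowtie1}: passing sets $P_j$, openness of passing and of capture, a trichotomy at boundary points, and the local orientation given by \eqref{eq:shooting-transversality} at each connection, followed by induction on $j$.

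The paper instead works on a fixed energy oval $\Gamma_e$ with $g_B<e<h_B$. It shows that the outgoing branch and all $2m$ stable branches of the $p_i$ cross $\Gamma_e$ exactly once. It then uses the signed determinants \eqref{eq:general-signed-determinants} to prove that, on all of $I$, the outgoing phase strictly decreases in $\alpha$ and every stable phase strictly increases. The lifted differences $F_j=\vartheta-\sigma_j$ are therefore globally strictly decreasing. Uniqueness, the ordering $\beta_k<\beta_j$ and consecutiveness then follow from the intermediate value theorem. The child labels are read off the arcs of $\Gamma_e$ between the stable crossings \eqref{eq:general-stable-order}, and local finiteness comes from boundedness of the phases.

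What the paper's route buys is global monotonicity and a framework that does not care where, or after how many turns, the trajectory enters $B$. Its cost is the construction of the section, the cyclic order, the treatment of turning points on $\Gamma_e$, and normalizing the lifts to turn counts. Your route needs information only at the zeros. It makes \eqref{eq:general-itinerary-index} a plain count of crossings of the lines $u=p_i$. That count is legitimate: while $H>-h_B$, every turn, whether inside $B$ or in an ancestor annulus, lies outside the span of the children of $B$. Your route also yields the quantitative bound $\beta_j=O(1/j)$.

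Three points should be tightened.
\begin{enumerate}
\item The attained indices form an interval beginning at the first $j$ with $P_j\neq I$, not an initial interval. On a child entry interval, every trajectory has already passed many targets.
\item Your step (b) can be closed as follows. Over the region traversed between targets $j-1$ and $j$, the set $\{H<-h_B\}\cap B$ is exactly the intervening child. A trajectory that is neither captured nor passing has $H$ decreasing to a limit in $[-h_B,-g_B)$. Its limit equilibrium therefore lies in the interior of $B$ with $G\leq h_B$, so it is one of the $p_i$, and the itinerary forces it to be the target.
\item The turns counted in $n$ include those made before entry into $B$. In the energy bound, use the global budget $h_B$ rather than $G(p_i)-g_B$; every earlier half-rotation also sweeps across your interval $J$.
\end{enumerate}
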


\begin{proof}
We first justify the hierarchy of bubbles. The projection of $B$ is an interval on which $G>g_B$. When there is an interior saddle, $h_B>g_B$, and every interior maximum of $G$ is greater than $h_B$: each such maximum is adjacent to an interior minimum on at least one side. Between successive $p_i$, we have $G>h_B$, since an additional contact with this level would be another minimum attaining $h_B$. Lemma~\ref{lem:bubble} therefore connects successive $p_i$ by zero diffusion separatrices. The outer branches turn at the first regular contacts with $G=h_B$, before the boundary of $B$. Their loops and the intervening connections bound exactly the $m+1$ child bubbles. Every saddle interior to a child has height greater than $h_B$. At least one interior saddle becomes a boundary point at each descent, so this hierarchy is finite. Without an interior saddle, alternation leaves exactly one stable equilibrium.

We next construct a common section. We write $X_\alpha\coloneqq(v,q-\alpha v)$, fix $g_B<e<h_B$, and define the energy oval and its Hamiltonian parametrization by
\be
\label{eq:general-energy-section}
 \Gamma_e\coloneqq\{(u,v)\in B:H(u,v)=-e\},\qquad
 \gamma_e'(\theta)=X_0(\gamma_e(\theta)),\quad
 \gamma_e(\theta+T_e)=\gamma_e(\theta),
\ee
where $T_e>0$ is the period. All equilibrium heights in $B$ are at least $h_B$, so $\Gamma_e$ is a single simple regular oval. We place $\theta=0$ at its right turning point; its negative and positive branches have phases $(0,T_e/2)$ and $(T_e/2,T_e)$, respectively.

For each $\alpha\in I$, the outgoing branch crosses $\Gamma_e$ exactly once. Indeed, absorption and Lemma~\ref{lem:convergence2saddle} give an interior equilibrium limit of energy at most $-h_B$, and $H$ decreases strictly along every nonconstant orbit. Both stable branches of every $p_i$ also cross $\Gamma_e$ exactly once in backward time, for every $\alpha>0$. Until this crossing, they stay in a compact subset of $B$ with $-h_B<H\leq-e$. If a branch never crossed, the backward energy identity, bounded derivatives, and the convergence argument of Lemma~\ref{lem:convergence2saddle} would give an equilibrium limit of energy strictly greater than $-h_B$, which is impossible. These finite crossings depend continuously on $\alpha$, also at $v=0$, by strict monotonicity of $H$ and continuous dependence on finite time intervals.

The stable crossing points never coincide, by uniqueness at fixed $\alpha$. Their cyclic order is
\be
\label{eq:general-stable-order}
 p_1^-,\ldots,p_m^-,p_m^+,\ldots,p_1^+,
\ee
where the signs refer to $v$ near the target saddle. We verify this order first as $e$ increases to $h_B$, when each crossing approaches its saddle, and then continue the crossings down to the chosen oval. Their order cannot change without an intersection. A stable branch may turn between the section and the saddle, so its sign on the section need not be its arrival sign.

We prove opposite monotonicity of the crossing phases. For a regularly normalized family $U_\alpha(y)$ of unstable or stable trajectories, we set $w\coloneqq dU_\alpha/d\alpha$ and $W\coloneqq\det(X_\alpha,w)$. The variation equation gives
\be
\label{eq:general-phase-variation}
 w_y=DX_\alpha w+(0,-v),\qquad W_y=-\alpha W-v^2.
\ee
As in \eqref{eq:shooting-transversality}, the saddle asymptotics give
\be
\label{eq:general-signed-determinants}
 W^u(y)=-e^{-\alpha y}\int_{-\infty}^y e^{\alpha s}v(s)^2\,ds<0,
 \qquad
 W^s(y)=e^{-\alpha y}\int_y^\infty e^{\alpha s}v(s)^2\,ds>0.
\ee
The positive source eigenvalue and the target eigenvalue $\lambda_-<-\alpha$ justify the respective boundary terms. Changing the time origin adds a multiple of $X_\alpha$ to $w$ and leaves $W$ unchanged. At a crossing with $v\neq0$, differentiation along the fixed oval yields
\be
\label{eq:general-phase-sign}
 W=\det(X_\alpha,X_0)\theta'(\alpha)=\alpha v^2\theta'(\alpha).
\ee
At a turning point, we instead use $W\neq0$: the map $(\alpha,y)\mapsto U_\alpha(y)$ is locally invertible. Its inverse parameter, restricted to $\Gamma_e$, satisfies
\be
\label{eq:general-phase-at-turn}
 \frac{d\alpha}{d\theta}=\frac{\alpha v^2}{W}.
\ee
This derivative has a fixed sign and vanishes only at the isolated turning points. Integration and inversion therefore give strict monotonicity through these points as well. The outgoing phase strictly decreases with $\alpha$; every stable phase strictly increases.

We lift the outgoing phase to a continuous real function $\vartheta$ on $I$, and the stable phases to continuous functions $\sigma_j$, $j\in\mathbb Z$, in the order~\eqref{eq:general-stable-order}, with $\sigma_{j+2m}=\sigma_j+T_e$. Their lifts remain strictly ordered. We define
\be
\label{eq:general-phase-differences}
 F_j\coloneqq\vartheta-\sigma_j,\qquad
 C_j\coloneqq\{\alpha\in I:F_j(\alpha)>0>F_{j+1}(\alpha)\}.
\ee
Each $F_j$ is continuous and strictly decreasing. A zero identifies the same point of an unstable and a stable branch at the same $\alpha$, and hence exactly a saddle connection. It is unique for this index. If $j<k$ have zeros, then $\beta_k<\beta_j$; for every $j<\ell<k$, we have $F_\ell(\beta_j)<0<F_\ell(\beta_k)$. Thus every intermediate index has its unique zero. Each $C_j$ is an open interval, possibly empty, and equals $(\beta_{j+1},\beta_j)$ when both endpoints exist. The phases are bounded on compact subsets of $I$; only finitely many period translates can meet $\vartheta$ there, proving local finiteness.

We check that these indices are the turns used in the article. At a crossing away from a turning point, let $N$ be the number of preceding turns of the outgoing trajectory. Its lift is the reduced phase plus $T_e\lfloor N/2\rfloor$. If a stable branch has $M$ turns from the section to its saddle, its lift is the reduced phase minus $T_e\lceil M/2\rceil$ for negative arrival, or minus $T_e\lfloor M/2\rfloor$ for positive arrival. These normalizations preserve the cyclic order: starting with $e$ close to $h_B$, where $M=0$, they continue without jumps when the section passes a turn. At a connection, $N+M=n$; the period difference is $T_e\lfloor n/2\rfloor$. The ranks of the negative and positive arrivals at $p_i$ are $i-1$ and $2m-i$. This proves~\eqref{eq:general-itinerary-index}. If the crossing itself is a turn, we vary $e$ slightly; the same continuous lifts give the same index. All preceding turns, including those before entry into $B$, are counted.

It remains to identify the intervals and prove exhaustion. At fixed $\alpha$, every point of $\Gamma_e$ except the $2m$ stable crossings converges to an equilibrium with energy below $-h_B$, and therefore enters a child bubble in finite time. Entry is open in the starting point and parameter, by finite-time continuous dependence and absorption. Consequently the child label is constant on each arc between consecutive stable crossings. On its two sides, a stable branch exits near its saddle along the two outgoing branches, as in the local estimate~\eqref{eq:saddle-exit-estimate}; these enter the two adjacent children. This gives the $D,R,L$ labels in the statement. Choosing a point continuously in each open arc and using openness of entry shows that its label is also constant as $\alpha$ varies. Every $\alpha\in I$ is a zero of some $F_j$ or belongs to exactly one $C_j$, proving the partition at this level.

We repeat on each nonempty $C_j$ in its child bubble. The number of interior saddles strictly decreases, so every descent ends after finitely many steps. There are countably many cells at each step and only finitely many levels; their critical sets together are at most countable. This proves exhaustion, including the case of the outer bubble, entered by every $\alpha>0$.

For the full critical set, we retain the countability conclusion obtained by recursion. We now verify that its complement is open. Near a sink at a fixed parameter $\alpha_*$, write the system as $z_y=A_\alpha z+O(|z|^2)$. The positive matrix
\be
\label{eq:attracting-ellipsoid-matrix}
 P\coloneqq\int_0^\infty e^{A_{\alpha_*}^Ts}e^{A_{\alpha_*}s}\,ds
\ee
satisfies $A_{\alpha_*}^TP+PA_{\alpha_*}=-\mathrm{Id}$. A sufficiently small ellipsoid $z^TPz\leq r^2$ is attracting and forward invariant for nearby parameters. Finite-time entry into it proves openness of each sink basin. Thus $\Sigma_B$ is relatively closed, and each component of its complement has a constant limiting sink. A connection has finitely many turns: it is monotone near its endpoint saddles, while every zero of $v$ in the remaining compact time interval has $v_y=q(u)\neq0$ and cannot accumulate. Simplicity follows from~\eqref{eq:shooting-transversality}. Uniqueness at the terminal step applies on the fixed entry interval supplied by the preceding steps.
\end{proof}

\section{Numerical experiments}
\label{sec:6}

Numerical experiments were essential to develop intuition and confirm hidden phenomena indicated by the theory.
In this section, we describe the numerical tools which we used to study travelling waves to the conservation
law~\eqref{c1.1} and which have already been used to produce some of the earlier figures.
All computations were performed in Python using the libraries
SciPy and Numpy, and the codes are available upon request.

We begin by observing that for the family of polynomial fluxes $f_K$ (where $K$ here denotes the odd polynomial degree) and a quadratic entropy,
it was possible to explicitly compute the entropy dissipation $E(\cdot, u_0)$
and the function $G(\cdot; u_0,a)$ using polynomial evaluations. In particular, 
the right hand side of the system \eqref{Agsystemb}, which only required the polynomial $\partial_u G$, 
could be computed analytically. The system \eqref{Agsystemb} was solved using a 
straightforward $4$-th order Runge-Kutta method with a constant timestep and
we detail later how the systems were initialized (or stopped) when they started (or
converged) asymptotically to a saddle equilibrium. A fixed timestep must resolve the saddle rates as well as the turns. 
The refinement check reported below tests the critical ratios; it is not a uniform pointwise error bound for all profiles. The accuracy for the polynomial $f_K$ 
allowed them to be used to validate our numerical techniques for
the computation of the entropy dissipation $E$ and the function $G$, appearing in the system \eqref{Agsystemb}, 
before they were applied to numerical fluxes, specifically fluxes given as cubic splines.

To easily produce a wide variety of fluxes with two continuous derivatives, we used the \texttt{CubicSpline}
function of the SciPy library of Python. In order to construct a flux with the same equilibria as $f_K$,
that is with the same intersection points between the graph of $f_K$ and the line $K(u-K) +f_K(K)$,
besides the values of $f_K$ at the intersection points at $k=1, \ldots,K$, we chose to set the remaining values 
of the cubic spline at the mid points
$$
      1+\frac{1}{2}, 2+\frac{1}{2}, \ldots,  K-\frac{1}{2}.
$$
Finally, we set the values of the first derivative at $u=1, K$ to complete the description of
the spline. When using the values of $f_K$ at these midpoints to determine the spline, we have found 
that the spline was almost visually indistinguishable from $f_K$.

For the fluxes given as cubic splines, it was necessary to numerically evaluate the entropy flux $F$,
the entropy dissipation $E$, and the function $G$, the three of which required numerical integrations.
We chose to use Simpson's composite quadrature, which is exact for polynomials of degree $3$. For the quadratic entropy, the integrands defining $F$, $E$, and $G$ are piecewise cubic. Splitting quadrature panels at every spline knot makes Simpson's rule exact in exact arithmetic; panels straddling knots require an error check by refinement. Continuity of two derivatives alone does not supply a quantitative error bound. These algorithms 
were validated by comparing their errors when computing 
these functions numerically, rather than analytically.

An independent check reconstructs the clamped cubic splines in Tables~\ref{tab:spline1} and \ref{tab:spline2} by solving for their nodal second derivatives. For $q(u)=f(u)-7u-2$, the only roots on $[1,7]$ are $1,2,\ldots,7$, all simple. Direct polynomial integration gives $G(1)<0<G(2)$ and $G(j)>0$ for $2\leq j\leq6$, with $G(7)=0$; thus the outer first root lies in $(1,2)$ and the relevant bounded region lies in the spline interval.

The check matches unstable and stable heights on a vertical section, rather than testing a finite-distance target near the final saddle. For monotone connections from $7$ to $5$ and from $5$ to $3$, the sections are $u=6$ and $u=4$. For the one-turn connection from $5$ to $3$, the returning positive branch is matched at $u=2$. Both branches start at displacement $\epsilon_0$ along a unit saddle eigenvector. The Runge--Kutta timestep and displacement are refined as follows; crossing times within a timestep are refined by bisection, and the parameter bracket is bisected to width below $2\cdot10^{-10}$.
\begin{center}
\begin{tabular}{cc|cc|ccc}
 timestep & $\epsilon_0$ & \multicolumn{2}{c|}{Table~\ref{tab:spline1}} & \multicolumn{3}{c}{Table~\ref{tab:spline2}}\\
 & & $7\to5$ & $5\to3$ & $7\to5$ & $5\to3$ & $5\to3$, one turn\\
\hline
 $0.002$ & $2\cdot10^{-5}$ &.549353403 &.573803633 &.212313149 &.644036807 &.217873130\\
 $0.001$ & $2\cdot10^{-5}$ &.549353396 &.573803633 &.212313147 &.644036807 &.217873130\\
 $0.0005$ & $2\cdot10^{-6}$ &.549353396 &.573803633 &.212313147 &.644036807 &.217873130
\end{tabular}
\end{center}
These refinements preserve the printed five-decimal values and the strict numerical gaps used in the examples. They do not give rigorous enclosures of the exact critical ratios.

We conclude this section by describing the algorithm we used to compute the
values of the diffusion-dispersion ratio for which there exists $n$-oscillation trajectories
connecting $u_j$ to $u_k$, both saddle equilibria. The first step though, is to
explain how, for a fixed value of $\alpha,$ the numerical solution to \eqref{Agsystemb} was initialized at $u_j$
and how asymptotic convergence to $u_k$ was determined. A small parameter $\epsilon_0$
was introduced, taken to be $2 \cdot 10^{-5}$ for the examples \ref{ex:counter1}
and \ref{ex:counter2}, and the solution $U$ was initialized to 
$$
   U(0) =  (u_j,0)^T + \epsilon_0 v_i(u_j),
$$
where $v_i$ is the unit eigenvector in $Q^i_j$.  Similarly, the finite-distance stopping test for approximate arrival at $u_k$ required a time $t^*$, after $n$ crossings of the $u$-axis, at which
$$
     \big\|  U(t^*) -  \big((u_k,0)^T + \epsilon_0 v_\ell(u_k)\big) \big\| < \delta_0, 
$$
for some preset tolerance $\delta_0$. To compute the numerical
value of the diffusion-dispersion ratio $\overline{\alpha}_n(j,k)$ for which
there exists an $n$-oscillation trajectory connecting $u_j$ to $u_k$, we implemented
a straightforward shooting method with a bisection method to converge
towards the optimal value of $\alpha$. The bisection method is based on 
computing the root of
$
       g(\alpha) := u_{\alpha}(t^*) - u_\ell 
$
where $u_\ell$ is the $u$-component of $ (u_k,0)^T + \epsilon_0 v_\ell(u_k) = (u_\ell,v_\ell)$
and $u_\alpha(t^*)$ is the component of the solution $U_\alpha(t^*)=(u_\alpha(t^*), v_\alpha(t^*))$
at the time $t^*$ at which the solution $U_\alpha$ reaches the height $v_\alpha(t^*) = v_\ell$.
Estimating $u_\alpha(t^*)$ required interpolations between the two timesteps during which the
numerical solution $U_\alpha$ crossed the height $v_\ell$.

\section{Concluding remarks}
\label{sec:conclusion}

The analysis in this paper exploited the fact that the entropy was given by the explicit expression \eqref{entropy:explicit},
specifically in the identity \eqref{relation_E_G}. Additional work would be needed for arbitrary convex entropies. A bound $U''(u)\geq c>0$ alone does not turn~\eqref{relation_E_G} or~\eqref{eq:explicitastar} into comparison inequalities, because the integrand $f(v)-f(u_0)-a(v-u_0)$ in the entropy formula can change sign. Furthermore, this work also treated only
linear regularizations of the form \eqref{defusidispersieq}, but as was done in \cite{BedChaCoqLeF}, the work
could be extended quite naturally to nonlinear regularizations. Such an extension would require suitable regularity, an absorbing energy, and a signed parameter variation identity for the chosen regularization, in addition to the analysis in~\cite{BedChaCoqLeF}.
Overall, we observe that a stronger selection criteria would be needed to handle
nonclassical shocks with two or more oscillations.

This research has also established that when three or more inflection points are present,
then new length scales appear that determine whether or not new equilibrium points
are reachable for small values of the diffusion-dispersion ratio. In the examples presented so
far, the equilibrium points were all separated by the same unit distance, but the variation in 
$G$ between the equilibrium decided whether some equilibria were reachable. A more
complete length scale would account for the distance between equilibria, but
in our analysis the attainability of equilibrium states was expressed entirely
in terms of the critical diffusion-dispersion ratios.

The next issue remains open: how to construct the Riemann solver for fluxes with an arbitrary number of
inflection points. The elementary classification and the exhaustive ordering for nested configurations describe families of profiles relevant to this problem. A global kinetic description and the construction of the corresponding Riemann solver require further analysis.

\paragraph{Acknowledgments}

This research was initiated when the second author (PLF) was a visiting research fellow at the Courant Institute for Mathematical Sciences, New York University. During the completion of this project, the second author were partially supported by the French research project ANR-23-CE40-0010-02 and the European research project MSCA-101131233.



\begin{thebibliography}{10}

\bibitem{BedChaCoqLeF} 
N. Bedjaoui, C. Chalons, F. Coquel, and P.G. LeFloch, 
Non-monotonic traveling waves in Van der Waals fluids, 
Analysis Appl. 3 (2005), 419--446. 

\bibitem{BedjaouiLeFloch1} N. Bedjaoui and P.G. LeFloch, 
Diffusive-dispersive traveling waves and kinetic relations. I. Nonconvex hyperbolic conservation laws, 
J. Differential Equations 178 (2002), 574--607.
 
\bibitem{BedjaouiLeFloch3} N. Bedjaoui and P.G. LeFloch, 
Diffusive-dispersive traveling waves and kinetic relations. III. An hyperbolic model of nonlinear elastodynamics,
Ann. Univ. Ferrara Sc. Mat. 47 (2001), 117--144.

\bibitem{BedjaouiLeFloch5} N. Bedjaoui and P.G. LeFloch, 
Diffusive-dispersive traveling waves and kinetic relations. V. Singular diffusion and dispersion terms, 
Proc. Royal Soc. Edinburgh 134A (2004), 815--844.  

\bibitem{ElHoeferShearer2017}
G.A. El, M.A. Hoefer, and M. Shearer,
Dispersive and diffusive-dispersive shock waves for nonconvex conservation laws,
SIAM Rev. 59 (2017), 3--61.

\bibitem{JMS} D. Jacobs, W. R. McKinney, and M. Shearer, Traveling wave solutions of the modified
Korteweg-de Vries-Burgers equation, J. Differential Equations 116 (1995), 448--467.

\bibitem{LaforestLeFloch} M. Laforest and P.G. LeFloch, 
Diminishing functionals for nonclassical entropy solutions selected by kinetic relations, 
Portugal Math. 67 (2010),  279--319.

\bibitem{LeFlochbook} P.G. LeFloch, 
{\sl Hyperbolic systems of conservation laws. The theory of classical and nonclassical shock waves,} 
Lectures in Mathematics, ETH Z\"urich, Birkh\"auser, 2002.

\bibitem{LeFlochreview} P.G. LeFloch, 
{\it Kinetic relations for undercompressive shock waves.
Physical, mathematical, and numerical issues,}
in ``Nonlinear Partial Differential Equations and Hyperbolic Wave Phenomena'', 
Contemporary Mathematics, vol. 526, 
Amer. Math. Soc., Providence, RI, 2010, pp.~237--272.
 

 


\end{thebibliography}
\end{document}